\documentclass[12pt]{amsart}
\usepackage{amsmath}
\usepackage{geometry,amsthm,graphics,tabularx,amssymb,shapepar,eucal}
\usepackage{latexsym,amsfonts,amssymb,amsmath,amsxtra,bbm,color,mathrsfs}
\usepackage{amscd}
\usepackage{hyperref}
\usepackage{pdfsync}
\usepackage[all]{xy}
\usepackage{verbatim}
\usepackage{tikz-cd}

\usepackage{braket}

\newcommand{\bsl}{\backslash}

\newcommand{\BC}{{\mathbb {C}}}

\newcommand{\BN}{{\mathbb {N}}}

\newcommand{\BQ}{{\mathbb {Q}}}
\newcommand{\BR}{{\mathbb {R}}}

\newcommand{\CA}{{\mathcal {A}}}
\newcommand{\CB}{{\mathcal {B}}}
\newcommand{\CC}{{\mathcal {C}}}

\newcommand{\CF}{{\mathcal {F}}}
\newcommand{\CG}{{\mathcal {G}}}
\newcommand{\CH}{{\mathcal {H}}}

\newcommand{\CM}{{\mathcal {M}}}
\newcommand{\CN}{{\mathcal {N}}}

\newcommand{\CS}{{\mathcal {S}}}

\newcommand{\CU}{{\mathcal {U}}}
\newcommand{\CV}{{\mathcal {V}}}
\newcommand{\CW}{{\mathcal {W}}}
\newcommand{\CX}{{\mathcal {X}}}
\newcommand{\CY}{{\mathcal {Y}}}

\newcommand{\FS}{{\mathfrak {S}}}

\newcommand{\RO}{{\mathrm {O}}}

\newcommand{\RU}{{\mathrm {U}}}

\newcommand{\GL}{{\mathrm{GL}}}

\newcommand{\Ind}{{\mathrm{Ind}}}

\newcommand{\SL}{{\mathrm{SL}}}

\newcommand{\tr}{{\mathrm{tr}}}

\newcommand{\wt}{\widetilde}
\newcommand{\wh}{\widehat}

\newcommand{\bs}{\backslash}

\newcommand{\temp}{\operatorname{temp}}

\newcommand{\diag}{\operatorname{diag}}

\newcommand{\od}{\operatorname{d}}

\newcommand{\oL}{\operatorname{L}}

\newcommand{\oZ}{\operatorname{Z}}

\newcommand{\R}{\mathbb R}

\newcommand{\abs}[1]{\lvert#1\rvert}

\newcommand{\be}{\begin {equation}}
\newcommand{\ee}{\end {equation}}
\newcommand{\bee}{\begin {equation*}}
\newcommand{\eee}{\end {equation*}}

\newcommand{\cf}{\emph{cf.}~}

\theoremstyle{Theorem}

\theoremstyle{Theorem}

\theoremstyle{Theorem}
\newtheorem{lem}{Lemma}[section]
\newtheorem{corl}[lem]{Corollary}
\newtheorem{thml}[lem]{Theorem}
\newtheorem{leml}[lem]{Lemma}
\newtheorem{prpl}[lem]{Proposition}

\theoremstyle{Theorem}

\theoremstyle{Plain}

\newtheorem{remarkl}[lem]{Remark}

\theoremstyle{remark}

\theoremstyle{remark}

\theoremstyle{Definition}

\newcommand{\Irr}{\mathrm{Irr}}

\newcommand{\bk}{\mathbbm{k}}

\numberwithin{equation}{section}

\begin{document}

\title[Jacquet-Shalika integrals]
{Local Jacquet-Shalika integrals and modifying factors} 

\author[D. Jiang]{Dihua Jiang}
\address{School of Mathematics, University of Minnesota, Minneapolis, MN 55455, USA}
\email{dhjiang@math.umn.edu}

\author[D. Liu]{Dongwen Liu}
\address{School of Mathematical Sciences, Zhejiang University, Hangzhou, 310058, P. R. China}
\email{maliu@zju.edu.cn}

\author[B. Sun]{Binyong Sun}
\address{Institute for Advanced Study in Mathematics and New Cornerstone Science Laboratory, Zhejiang University, Hangzhou, 310058, P. R. China}
\email{sunbinyong@zju.edu.cn}

\author[F. Tian]{Fangyang Tian}
\address{School of Mathematical Sciences, Zhejiang University, Hangzhou, 310058, P. R. China}
\email{tianfangyangmath@zju.edu.cn}


\subjclass[2010]{22E50, 43A80} \keywords{
Jacquet-Shalika Integral, Local Gamma Factor and 
$L$-factor, Modifying Factor, Open Orbit Integral}

\begin{abstract}
Over any local field of characteristic zero,
we develop a local theory of Jacquet–Shalika integrals that were originally introduced by Jacquet and Shalika to study  exterior square local $L$-factors for \(\mathrm{GL}_m\). In particular, at least in the archimedean case, we establish  a complete functional equation matching Artin local \(\varepsilon\)-factors. Central to the argument is a novel comparison strategy between  Jacquet–Shalika  integrals and open-orbit integrals associated to the  Shalika subgroup.
\end{abstract}

\maketitle

\tableofcontents

\section{Introduction}

In \cite{JS90}, Jacquet and Shalika introduce certain local integrals to study the exterior square $L$-functions. These integrals are now called the Jacquet--Shalika integrals. In this paper, we will establish some basic properties of these integrals. These properties will be used for the arithmetic study of some $L$-functions (see \cite{JLST26}). 

\subsection{Representations and exterior square local factors}

Let $\bk$ be a local field of characteristic zero, with normalized absolute value $|\cdot|_{\bk}$. Fix a nontrivial unitary character $\psi :\bk\rightarrow \mathbb{C}^\times$.

For a connected reductive group $G$ over $\bk$, denote by $\operatorname{Irr}(G)$ the set of isomorphism classes of irreducible admissible smooth representations of $G(\bk)$ (in the archimedean case, ``admissible smooth representations'' means Casselman--Wallach representations). 
Here and henceforth, when no confusion is possible, we do not distinguish a reductive group over $\bk$ from the group of its $\bk$-points. 
We also do not distinguish an element of $\operatorname{Irr}(G)$ from an irreducible representation representing it. 
Let $\operatorname{Irr}_{\mathrm{temp}}(G)\subset \operatorname{Irr}(G)$ denote the subset of the tempered unitarizable representations.

Let $m\in \BN:=\{0,1,2,\dots\}$. We introduce the following groups, which will be used throughout the paper:
\begin{itemize}
    \item $G_m:=\GL_m(\bk)$;
    \item $B_m=A_m N_m\subset G_m$ is the the upper triangular Borel subgroup, where $A_m$ is the diagonal torus and  $N_m$ is the unipotent radical;
    \item $\overline{B}_m\subset G_m$ denotes the lower triangular Borel subgroup;
    \item  $P_m$ denotes the mirabolic subgroup of $G_m$, i.e., the subgroup of matrices with last row 
\be\label{em}
e_m:=(0,0,\dots, 0, 1) \in \bk^m;
\ee
  \item 
$K_m$ denotes the standard maximal compact subgroup of $G_m$, namely 
\[
K_m =
\begin{cases}
\RO(m), & \text{if } \bk \cong \R,\\[2pt]
\RU(m), & \text{if } \bk \cong \BC,\\[2pt]
\GL_m(\mathcal{O}_\bk), & \text{if } \bk \text{ is non-archimedean with ring of integers } \mathcal{O}_\bk.
\end{cases}
\]
\end{itemize}
 Define a character 
\[
\psi_m:  N_m\to \mathbb{C}^\times,\quad [x_{i,j}]_{1\leq i,j\leq  m}\mapsto \psi\left(\sum^{m-1}_{i=1}x_{i,i+1}\right).
\]
To shorten the notation, in this paper we write 
\[
\omega(g) := \omega(\det g)\quad \text{for every character } \omega:\bk^\times\rightarrow \mathbb{C}^\times \text{ and every } g\in G_m.
\]
In particular, $|g|_{\bk} := |\det g|_{\bk}$. 

We consider a representation of $G_m$ given by the normalized smooth parabolic induction 
\begin{equation} \label{nt}
\pi_\lambda =  \operatorname{Ind}^{G_m}_{\overline{P}}(\tau_\lambda) = \operatorname{Ind}^{G_m}_{\overline{P}} \bigl( \tau_1 |\cdot|^{\lambda_1}_{\bk} \,\widehat{\otimes} \, \tau_2|\cdot|_{\bk}^{\lambda_2}\,\widehat{\otimes} \cdots \widehat{\otimes}\, \tau_r|\cdot|_{\bk}^{\lambda_r}\bigr),
\end{equation}
where 
\begin{itemize}
\item $\overline{P}$ is the block lower triangular parabolic subgroup of $G_m$ with Levi subgroup 
\[
M =   G_{n_1} \times G_{n_2}\times \cdots\times G_{n_r},
\]
where $r\geq 0$, $n_1,n_2,\ldots, n_r\geq 1$, and $n_1+n_2+\cdots+n_r=m$;
\item
 $\tau = \tau_1\,\widehat{\otimes} \, \tau_2\, \widehat{\otimes} \cdots \widehat{\otimes} \, \tau_r \in \operatorname{Irr}_{\mathrm{temp}}(M)$,  where  $\tau_i\in\operatorname{Irr}_{\mathrm{temp}}(G_{n_i})$ ($i=1,2,\dots, r$);
 \item  $\lambda = (\lambda_1, \lambda_2, \dots, \lambda_r)  \in \mathbb{C}^r$; and  
 \item  $\tau_{\lambda}=\tau_1 |\cdot|^{\lambda_1}_{\bk} \,\widehat{\otimes} \, \tau_2|\cdot|_{\bk}^{\lambda_2}\,\widehat{\otimes} \cdots \widehat{\otimes}\, \tau_r|\cdot|_{\bk}^{\lambda_r}\in\operatorname{Irr}(M)$.
 \end{itemize}
Here $\widehat{\otimes}$ stands for the completed inductive tensor product. In the non-archimedean case, the smooth representations are equipped with the finest locally convex topologies and the completed inductive tensor product agrees with the algebraic tensor product. 

The following facts are well-known.
 \begin{itemize}
  \item  The representation $\pi_\lambda$ is of Whittaker type in the sense that 
  \[
  \dim \operatorname{Hom}_{N_m}(\pi_\lambda, \psi_m)=1.
  \]
  \item For fixed $\tau \in \operatorname{Irr}_{\mathrm{temp}}(M)$, $\pi_\lambda$ is irreducible for $\lambda$ outside a measure zero subset of  $\mathbb{C}^r$.
  \item Each $\pi \in \operatorname{Irr}_{\mathrm{gen}}(G_m)$ (the subset of generic representations in $\operatorname{Irr}(G_m)$) is isomorphic to an induced representation $\pi_\lambda$ of the form \eqref{nt}.
 \end{itemize}

Denote by $\Re(z)$ the real part of a complex number $z$. We will use the following notation when $m>0$ (or equivalently $r>0$): for $\lambda=(\lambda_1,\lambda_2, \dots, \lambda_r) \in \mathbb{C}^r$, write
\begin{equation} \label{minmax}
\min\Re(\lambda):=\min_{i=1,2,\dots, r} \Re(\lambda_i),\quad \max\Re(\lambda) :=\max_{i=1,2,\dots, r} \Re(\lambda_i).
\end{equation}
Following \cite{BP21}, an admissible smooth representation of $G_m$ is called \emph{nearly tempered} if it is isomorphic to some $\pi_\lambda$ as in \eqref{nt} with  $|\Re(\lambda_i)|<1/4$ for all $i=1, 2, \ldots, r$.
It is known that all nearly tempered representations are irreducible.

For $\pi\in \operatorname{Irr}(G_m)$, denote by $\phi_\pi$ the local $L$-parameter of $\pi$ under the local Langlands correspondence, which is an $m$-dimensional admissible representation of the Weil–Deligne group $W_{\bk}'$ of $\bk$.  
Fix a character $\eta:\bk^\times\rightarrow \mathbb{C}^\times$.  We have the twisted exterior‐square local factors (see \cite{CST17, Sh24})
\begin{equation} \label{exL}
\begin{aligned}
& \oL(s, \pi, \wedge^2\otimes \eta^{-1}) := \oL(s, \wedge^2\phi_\pi\otimes \eta^{-1}),\\
& \varepsilon(s, \pi, \wedge^2\otimes\eta^{-1}, \psi) := \varepsilon(s, \wedge^2\phi_\pi\otimes\eta^{-1}, \psi),\\
& \gamma(s, \pi, \wedge^2\otimes \eta^{-1},\psi) :=  \varepsilon(s, \pi, \wedge^2\otimes\eta^{-1}, \psi)\cdot  \frac{\oL(1-s, \pi^\vee, \wedge^2\otimes \eta)}{\oL(s, \pi, \wedge^2\otimes \eta^{-1})},
\end{aligned}
\end{equation}
where the right‐hand sides are as in \cite{T79}.
For the parabolic induction $\pi_\lambda$ in \eqref{nt}, we define 
\begin{equation} \label{exL2nt}
\begin{aligned}
\oL(s, \pi_\lambda, \wedge^2\otimes\eta^{-1})
  := & \prod_{i=1}^r \oL(s+2\lambda_i, \wedge^2\phi_{\tau_i}\otimes\eta^{-1}) \\
  & \qquad \cdot \prod_{1\leq j<k \leq r} \oL(s+\lambda_j+\lambda_k, \phi_{\tau_j}\otimes\phi_{\tau_k}\otimes\eta^{-1}),
\end{aligned}
\end{equation}
and $\varepsilon(s, \pi_\lambda, \wedge^2\otimes\eta^{-1},\psi)$ and $\gamma(s, \pi_\lambda, \wedge^2\otimes \eta^{-1})$ are defined analogously. 

By the compatibility of the local Langlands correspondence with parabolic induction and unramified twists, if $\pi_\lambda^0$ denotes the Langlands subquotient of $\pi_\lambda$, then
\[
 \oL(s, \pi_\lambda,  \wedge^2\otimes \eta^{-1}) = \oL(s, \pi_\lambda^0,  \wedge^2\otimes \eta^{-1}),\quad \varepsilon(s, \pi_\lambda, \wedge^2\otimes\eta^{-1},\psi) = 
\varepsilon(s, \pi_\lambda^0, \wedge^2\otimes\eta^{-1},\psi),
\] 
where the right‐hand sides are given by \eqref{exL}. In particular, \eqref{exL} and \eqref{exL2nt} coincide when $\pi_\lambda$ is irreducible.

\subsection{Jacquet-Shalika integrals} \label{sec1.2.2}

Fix the self-dual Haar measure on $\bk$ with respect to $\psi$. For $k, k'\in 
\BN$, denote by $\bk^{k \times k'}$ the space of 
$k\times k'$ matrices over $\bk$, and write $M_k := \bk^{k\times k}$. We endow $\bk^{k\times k'}$ with the product measure, and fix the Haar measure on $G_k$ to be 
\[
\od\! g = |g|_{\bk}^{-n}\cdot \prod_{i,j=1}^{k} \od\! g_{i,j}
\quad\text{for } g=[g_{i,j}]_{1\leq i,j\leq k}\in G_k.
\]
For $\phi \in \mathcal{S}(\bk^k)$, the space of Schwartz functions on $\bk^k := \bk^{1\times k}$, define its Fourier transform with respect to a nontrivial unitary character $\psi'$ of $\bk$ by
\begin{equation}\label{FT}
  \mathcal{F}_{\psi'}(\phi)(x) = \int_{\bk^k} \phi(y) \,\psi'(y\, {}^t\!x)\od\! y, \qquad x\in \bk^k.  
\end{equation}
Here and thereafter, ${}^t\!(\cdot)$ indicates the transpose of a matrix.  

Throughout  the rest of this paper, set  \[
n:=\lfloor m/2\rfloor\quad\textrm{ so that $m=2n$ or $2n+1$.}
\]
The Shalika subgroup $S_m$ of $G_m$ is defined by
\[
S_m:=
\begin{cases}
\left\{
\begin{bmatrix} g & Xg \\ 0 & g \end{bmatrix}
\;\middle|\; g\in G_n,\ X\in M_n
\right\}, & m=2n, \\[6pt]
\left\{
\begin{bmatrix} g & Xg & y \\ 0 & g & 0 \\ 0 & xg & 1 \end{bmatrix}
\;\middle|\;
\begin{array}{l}
g\in G_n,\ X\in M_n, \\
y\in \bk^{n\times 1},\ x\in \bk^{1\times n}
\end{array}
\right\}, & m=2n+1,
\end{cases}
\]
which is a unimodular group.  
The Haar measures on $S_m$, $N_m$, etc., are induced from the fixed Haar measures on $G_n$ (viewed as a subgroup of $S_m$) and $\bk$. We always take right invariant quotient measures (when such measures exist) on homogeneous spaces under right actions of locally compact groups.

In the following, we introduce a representation $R_{\varphi_m}$ of $S_m$, where $\varphi_m$ is a certain character determined by $\eta$ and $\psi$. Similarly, one can define a representation $R_{\varphi_m^{-1}}$, the details of which will be omitted.

 If $m=2n$ is even, we first define a character 
\begin{equation} \label{varphi2n}
 \varphi_{2n}: S_{2n}\to \mathbb{C}^\times, \quad \begin{bmatrix} g& Xg  \\ 0& g\end{bmatrix} \mapsto \eta(g) \psi(\operatorname{tr} X).
\end{equation}
Let $S_{2n}$ act on $\bk^n$ from the right by
\begin{equation} \label{Sact}
(v,h)\mapsto v\cdot h:=vg, \quad \text{where } v\in \bk^n,\, h = \begin{bmatrix} g & Xg   \\ 0& g \end{bmatrix}\in S_{2n}.
\end{equation}
Then we define a representation $R_{\varphi_{2n}}$ of $S_{2n}$ on $\mathcal{S}(\bk^n)$ by 
\begin{equation} \label{Seven}
\left(R_{\varphi_{2n}}(h)\phi\right)(v) :=  \varphi_{2n}(h) \phi(v\cdot h),
\quad h\in S_{2n},\, \phi \in \mathcal{S}(\bk^n), \, v\in \bk^n.
\end{equation}

If $m=2n+1$ is odd, we first define a character 
\begin{equation}\label{varphi2n1}
    \varphi_{2n+1}: S_{2n+1}\cap P_{2n+1}\to \mathbb{C}^\times,\quad  \begin{bmatrix} g& Xg  & y \\ 0& g &  0\\ 0&0 & 1\end{bmatrix} \mapsto \eta(g) \psi(\operatorname{tr} X),
\end{equation}
where $P_m$ is the mirabolic subgroup of $G_m$ as introduced before. 
Then we define 
\begin{equation}\label{Sodd}
  R_{\varphi_{2n+1}} := \operatorname{ind}^{S_{2n+1}}_{S_{2n+1}\cap P_{2n+1}}\varphi_{2n+1}
\quad\text{(the unnormalized Schwartz induction)},  
\end{equation}
which is also realized on the space $\mathcal{S}(\bk^n)$ (see Section \ref{sec2.2} for details).

For each $k\in \BN$, let $\mathfrak{S}_k$ denote the permutation group of the set $\{1,2,\dots,k\}$ (this is the empty set when $k=0$). We identify a permutation $\sigma\in \mathfrak{S}_k$ with the permutation matrix in $G_k$ whose $(\sigma(i),i)$-th entry is $1$ for all $i=1,2,\dots,k$. Then $\mathfrak{S}_k$ is identified with the group of permutation matrices in $G_k$. Let $w_k$ be the longest element of $\mathfrak{S}_k$, which is identified as  the $k\times k$ anti-diagonal permutation matrix. 
Introduce as in \cite{JS90} the following element of $\mathfrak{S}_m$:
\begin{equation} \label{sigmam}
  \sigma_m := \begin{cases} 
  \begin{pmatrix} 
  1 & 2 & \cdots & n & n+1 & n+2 & \cdots & 2n \\
  1 & 3 & \cdots & 2n-1 & 2 & 4 & \cdots & 2n 
  \end{pmatrix}, &  m=2n, \\[6pt]
  \begin{pmatrix} 
  1 & 2 & \cdots & n & n+1 & n+2 & \cdots & 2n & 2n+1 \\
  1 & 3 & \cdots & 2n-1 & 2 & 4 & \cdots & 2n & 2n+1  
  \end{pmatrix}, &  m=2n+1.
  \end{cases} 
\end{equation}

Let $\pi_\lambda = \operatorname{Ind}^{G_m}_{\overline P}(\tau_\lambda)$ ($\lambda\in\mathbb{C}^r$) be an induced representation of $G_m$ as in \eqref{nt}. 
Fix a nonzero Whittaker functional  
\[
\nu_{\tau,\psi}\in \operatorname{Hom}_{N_m\cap M}(\tau, \psi_m|_{N_m\cap M}),
\]
which is unique up to scalar multiplication. 
For each $\lambda\in \mathbb{C}^r$, it determines a non-zero Whittaker functional 
\[
J_{\lambda}\in \operatorname{Hom}_{N_m}(\pi_\lambda, \psi_m) 
\]
such that 
\begin{equation}\label{whittaker1}
J_{\lambda}(\varphi)=\int_{(N_m\cap M)\backslash N_m} \langle \nu_{\tau,\psi}, \varphi(x) \rangle \cdot \overline{\psi_m}(x) \,\od\! x
\end{equation}
for all $\varphi\in \pi_\lambda$ whose support is contained in the Zariski open subset $\overline P N_m$ of $G_m$. 

Denote by 
\begin{equation}\label{whittaker2}
\mathcal{W}(\pi_\lambda,\psi_m):=\{ W_\varphi \mid \varphi\in \pi_\lambda \}
\subset \operatorname{Ind}_{N_m}^{G_m} \psi_m \qquad(\text{smooth induction})
\end{equation}
the Whittaker model of $\pi_\lambda$ with respect to $(N_m, \psi_m)$, where
\[
W_{\varphi}(g)=\langle J_{\lambda}, g\cdot \varphi \rangle, \quad \text{for all } g\in G_m.
\]
We equip $\mathcal{W}(\pi_{\lambda},\psi_m)$ with the quotient topology of $\pi_{\lambda}$. In general, for every admissible smooth representation $\pi$ of $G_m$ such that $\dim \operatorname{Hom}_{N_m}(\pi, \psi_m)=1$, we can similarly define its Whittaker model $\mathcal{W}(\pi, \psi_m)\subset \operatorname{Ind}_{N_m}^{G_m} \psi_m$.

For $W \in \operatorname{Ind}_{N_m}^{G_m} \psi_m$, $\phi \in \mathcal{S}(\bk^n)$, and $s\in\mathbb{C}$, the \emph{Jacquet--Shalika integral} introduced in \cite{JS90} can be reformulated as 
\begin{equation} \label{JSint}
 \oZ_{\mathrm{JS}}(s, W, \phi, \varphi_m^{-1}) := 
 \begin{cases}
 \displaystyle \int_{\overline{S}_m} W(\sigma_m h) \cdot \bigl(R_{\varphi_m^{-1}}(h)\phi\bigr)(e_n) \cdot | h|_{\bk}^{\frac{s}{2}}\,\od\! h, & m=2n, \\[8pt]
 \displaystyle \int_{\overline{S}_m} W(\sigma_m h) \cdot \bigl(R_{\varphi_m^{-1}}(h)\phi\bigr)(0) \cdot | h|_{\bk}^{\frac{s}{2}}\,\od\! h, & m=2n+1,
 \end{cases}
\end{equation}
where $e_n=(0,0,\dots,0,1)\in \bk^n$ as in \eqref{em}, and 
\begin{equation}\label{defsm}
\overline{S}_m:= \left(\sigma_m^{-1}N_m \sigma_m \cap S_m \right)\setminus S_m. 
\end{equation}
If moreover $W\in \mathcal{W}(\pi_\lambda, \psi_m)$, then we write 
\begin{equation}\label{zcirc}
\oZ_{\mathrm{JS}}^\circ(s, W, \phi, \varphi_m^{-1}) :=  \frac{\oZ_{\mathrm{JS}}(s, W, \phi, \varphi_m^{-1})}{\oL(s, \pi_\lambda, \wedge^2\otimes\eta^{-1})}.
\end{equation}

\subsection{Basic properties of Jacquet--Shalika integrals}

Write
\[
  \pi_\tau:=\operatorname{Ind}^{K_m}_{K_m\cap \overline P}(\tau),
\]
which is a smooth representation of $K_m$.  For every $\lambda\in \mathbb{C}^r$, restriction of functions induces an identification 
\be\label{indke}
  (\pi_\lambda)|_{K_m}=\pi_\tau. 
\ee
For each $f\in \pi_\tau$, write $f_\lambda$ for the corresponding element in $\pi_\lambda$.

 For every $W\in \mathcal{W}(\pi_\lambda, \psi_m)$, write 
\[
\widetilde{W}(h) := W(w_m \, {}^t h^{-1}),\quad h\in G_m.
\]
Then $\widetilde W\in \mathcal{W}(\breve{\pi}_\lambda, \overline{\psi}_m)$, where $\breve{\pi}_\lambda$ denotes the MVW involution 
of $\pi_\lambda$ (see \cite{MVW87}) so that
\[
\breve{\pi}_\lambda(h):=\pi_\lambda(w_m \, {}^t h^{-1} w_m) \quad \text{for all } h\in G_m.
\]
In particular, $\breve{\pi}_\lambda \cong \pi_\lambda^\vee$ when $\pi_\lambda$ is irreducible. Here and henceforth, a superscript $^\vee$ indicates the contragredient representation. 

Introduce the following element of $\mathfrak{S}_m$:
\begin{equation} \label{taum}
 \sigma'_m := 
 \begin{cases}
   \begin{bmatrix} 0 & 1_n \\ 1_n & 0 \end{bmatrix}, & m=2n, \\[6pt]
   \begin{bmatrix} 0 & 1_n & 0 \\ 1_n & 0 & 0 \\ 0 & 0 & 1 \end{bmatrix}, & m=2n+1,
 \end{cases}
\end{equation}
where $1_n$ denotes the $n\times n$ identity matrix. 
Denote by $\widehat{\bk^\times}$ the set of characters of $\bk^\times$, and for any $\omega\in \widehat{\bk^\times}$ let $\Re(\omega)$ be the real number (denoted by $\operatorname{ex}(\omega)$ in \cite{LLSS23}) such that 
\[
|\omega(a)| = |a|_{\bk}^{\Re(\omega)} \quad \text{for all } a\in \bk^\times. 
\]

Our main result on the local theory of Jacquet--Shalika integrals is as follows.

\begin{thml}\label{thm:FE_m}  
Let $\bk$ be any local field of characteristic zero and  
the representation $\pi_\lambda = \operatorname{Ind}^{G_m}_{\overline{P}}(\tau_\lambda)$ is defined as in \eqref{nt} with $\tau\in \Irr_{\temp}(M)$. Assume that the cuspidal support of $\tau$ is represented by a character of $(\bk^\times)^m$ when $\bk$ is non-archimedean.
\begin{enumerate}
    \item Suppose that $(s,\lambda)\in \mathbb{C}\times \mathbb{C}^r$, and  \begin{equation}\label{ss'}
    \Re(s)>\Re(\eta)-2\min\Re(\lambda)\quad\textrm{when $m>1$}.
\end{equation}
    Then for every $W\in \mathcal{W}(\pi_\lambda, \psi_m)$ and $\phi \in \mathcal{S}(\bk^n)$, the integral $\oZ_{\mathrm{JS}}(s, W, \phi, \varphi_m^{-1})$ converges absolutely.

    \item There exists a unique continuous function
    \[
    \oZ^\circ(\,\cdot\, ,\,\cdot, \,\cdot\, ,\, \cdot\, ): \mathbb{C}\times \mathbb{C}^r\times \pi_\tau\times \mathcal{S}(\bk^n)\rightarrow \mathbb{C}
    \]
    that is holomorphic in the first two variables and linear in the last two variables such that
    \[
    \oZ^\circ(s,\lambda, f, \phi) = \oZ^\circ_{\mathrm{JS}}(s, W_{f_\lambda}, \phi, \varphi_m^{-1})
    \]
    for all $f\in \pi_\tau$, $\phi \in \mathcal{S}(\bk^n)$, and $(s,\lambda)\in \mathbb{C}\times \mathbb{C}^r$ satisfying \eqref{ss'}.

    \item Let $(s,\lambda)\in \mathbb{C}\times \mathbb{C}^r$. For every $W\in \mathcal{W}(\pi_\lambda, \psi_m)$ and $\phi \in \mathcal{S}(\bk^n)$, the following functional equation holds:
    \begin{equation}\label{eq:FE_m}
    \oZ^\circ_{\mathrm{JS}}(1-s, \sigma'_m\cdot  \widetilde W, \widehat{\phi}, \varphi_m) = \eta(-1)^{mn} \varepsilon(s, \pi_\lambda, \wedge^2\otimes \eta^{-1},\psi) \oZ^\circ_{\mathrm{JS}}(s, W, \phi, \varphi_m^{-1}),
    \end{equation}
    where
    \[
    \widehat{\phi} :=
    \begin{cases}
    \mathcal{F}_{\psi}(\phi), & \text{if } m \text{ is even},\\
    \mathcal{F}_{\overline{\psi}}(\phi), & \text{if } m \text{ is odd}.
    \end{cases}
    \]

    \item Suppose that  $\lambda\in \mathbb{C}^r$, and  
    \[
\max\Re(\lambda)< \min\Re(\lambda)+1/2\quad \textrm{when }m>1.
\]
Then for every $s_0\in\mathbb{C}$ there exist $W\in \mathcal{W}(\pi_\lambda,\psi_m)$ and $\phi\in \mathcal{S}(\bk^n)$ such that
    \[
    \oZ^\circ_{\mathrm{JS}}(s_0, W, \phi, \varphi_m^{-1}) \neq 0.
    \]
\end{enumerate}
\end{thml}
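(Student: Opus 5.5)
We will prove the four parts in order. The main new input is a comparison between $\oZ_{\mathrm{JS}}$ and open-orbit integrals attached to the Shalika subgroup, as the abstract indicates.

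\emph{Part (1): convergence.} Using the Iwasawa decomposition of $\overline{S}_m$, we reduce $\oZ_{\mathrm{JS}}$ to an integral over $A_n\times K_n$ and a unipotent quotient. The integrand in $a\in A_n$ is then majorized by standard gauge estimates for $W\in\mathcal{W}(\pi_\lambda,\psi_m)$. Each such estimate is a finite sum of terms (character of $a$)$\,\cdot\,$(Schwartz function of the simple-root coordinates), with exponents controlled by $\min\Re(\lambda)$. Integrating the unipotent variables against the Schwartz function $\phi$ gives rapid decay in those directions. The remaining torus integral converges as soon as every exponent exceeds $\Re(\eta)-\Re(s)$ by the appropriate amount, and this is exactly \eqref{ss'}. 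For $m$ odd we first unfold $R_{\varphi_m^{-1}}$ through the Schwartz induction, which adds an integral over $x\in\bk^{1\times n}$ that is handled the same way.

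\emph{Part (2): holomorphic continuation of $\oZ^\circ$.} We realize $W_{f_\lambda}$ by the Jacquet integral \eqref{whittaker1}. Our plan is an inductive, parabolic-descent argument in the spirit of \cite{LLSS23}:
\begin{enumerate}
\item Restrict to a dense open set of $\lambda$ where $\pi_\lambda$ is induced in stages.
\item Substitute the Jacquet integral into $\oZ_{\mathrm{JS}}$ and exchange the order of integration. This expresses $\oZ_{\mathrm{JS}}$ as an \emph{open-orbit integral}: an integral of $\langle\nu_{\tau,\psi},f_\lambda(\cdot)\rangle$ over the open $\overline{P}\times S_m$-orbit in $G_m$.
\item Unwind that orbit integral. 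It becomes a product of lower-rank Jacquet--Shalika integrals for the $\tau_i$ and Rankin--Selberg integrals for the pairs $(\tau_j,\tau_k)$.
\item Observe that these factors produce exactly the $L$-factors in \eqref{exL2nt}.
\end{enumerate}
The cuspidal-support hypothesis in the non-archimedean case reduces the basic cases to $n_i=1$ (characters) and, archimedean, to $n_i\le 2$, where everything is explicit Tate or $\GL_2$ theory. Holomorphy of the normalized quotient then follows from the known meromorphy of the inner integrals, divided by their $L$-factors. Continuity in $f$ comes from uniform bounds on vertical strips, and uniqueness from the identity theorem.

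\emph{Part (3): functional equation.} We would combine two facts:
\begin{enumerate}
\item \emph{Uniqueness:} for generic $(s,\lambda)$, the space of $S_m$-equivariant trilinear forms, namely $\operatorname{Hom}_{S_m}(\pi_\lambda\widehat\otimes R_{\varphi_m^{-1}}\otimes|\cdot|^{s/2},\BC)$, is one-dimensional. We prove this by Bruhat theory on $\overline{P}\backslash G_m/S_m$, where only the open orbit contributes for generic parameters.
\item \emph{Invariance:} both sides of \eqref{eq:FE_m} define elements of this space.
\end{enumerate}
Together these give a proportionality factor $\gamma$. Comparing with the open-orbit expression factorizes $\gamma$ into lower-rank Jacquet--Shalika and Rankin--Selberg gamma factors. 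Those are known to equal Artin factors in the basic cases, by Tate, Jacquet--Piatetski-Shapiro--Shalika, and explicit $\GL_2$ computations, with the sign $\eta(-1)^{mn}$ tracked through $\sigma_m'$. After normalization by $\oL$ the equation holds on an open set, and it extends to all $(s,\lambda)$ by part (2). We expect the main obstacle to be this comparison step: justifying the interchange of integrals, and obtaining the exact Artin $\varepsilon$-factor rather than a factor up to units, especially in the archimedean case.

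\emph{Part (4): nonvanishing.} We would use the open-orbit expression. Choose $f$ supported in a small neighborhood of a representative of the open orbit, and choose $\phi$ supported near $e_n$ (or near $0$ when $m$ is odd). This makes $\oZ_{\mathrm{JS}}$ an entire function of $s$ that is nonvanishing at $s_0$. The hypothesis $\max\Re(\lambda)<\min\Re(\lambda)+1/2$ guarantees that the relevant Jacquet integrals and $L$-factors have no poles that could cancel this choice.
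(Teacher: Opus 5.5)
\textbf{Part (1).} This is essentially the paper's argument (Iwasawa decomposition plus the gauge estimate of Lemma~\ref{whit-est}), with one correction. The decay in the unipotent variable $X\in\bar{\mathfrak v}_n$ does not come from $\phi$, which only sees $e_n g$. It comes from $W$, through the Jacquet--Shalika bound $\prod_i|t_{2i-1}|_{\bk}\ge e^{\alpha\bar\sigma(u_X)}$ on the torus component of $u_X$.

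\textbf{Parts (2)--(3): the genuine gap.} Substituting the Jacquet integral into $\oZ_{\mathrm{JS}}$ and exchanging integrals does \emph{not} yield an open-orbit integral. For $m=2n$, the unfolded $\oZ_{\mathrm{JS}}$ lives on $N_m\times\overline{S}_m$ and evaluates $R_{\varphi_m^{-1}}(h)\phi$ at $e_n$. By contrast, $\Lambda_{\mathrm{JS}}(s,f,\phi,\varphi_m^{-1})=\int_{S_m}f(z_mh)\,(R_{\varphi_m^{-1}}(h)\phi)(v_n)\,|h|_{\bk}^{s/2}\,\od\! h$ lives on $S_m$ and evaluates at $v_n$. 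By Theorem~\ref{thm:MF_m} the two differ by $\prod_{1\le i<j\le m-i}\gamma(s,\xi_i\xi_j\eta^{-1},\psi)$, which is nontrivial once $m\ge 3$.

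That identity is the core of the paper, and it is not a Fubini statement. It is proved only for principal series, by induction on $m$ simultaneously with the principal-series functional equation. One writes $f'\in I(\xi')$ as a Godement section $\mathrm{g}^+_{\Phi,f,\xi'}$, so that both rank-$(m+1)$ integrals become $G_m$-integrals of rank-$m$ integrals, to which $(\mathrm{FE}_m)$ and $(\mathrm{MF}_m)$ apply.

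Likewise, your ``product of lower-rank JS and RS integrals'' is not an identity of integrals. Pinning down the exact $\varepsilon$-factor by splitting a proportionality constant into lower-rank gamma factors presupposes multiplicativity of JS gamma factors, which is not available, certainly not in the archimedean case. The generic one-dimensionality of $S_m$-invariant forms via archimedean Bruhat theory is a further unproved input.

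The paper needs none of this. The functional equation of $\Lambda_{\mathrm{JS}}$ is an explicit Tate computation along the torus $A_n^\dag$ (resp.\ $A_n'$). It produces $\eta(-1)^{mn}\prod_{i=1}^n\gamma(s,\xi_i\xi_{m+1-i}\eta^{-1},\psi)$, i.e.\ the pairs with $i+j=m+1$. The modifying factors on the $s$- and $(1-s)$-sides then supply exactly the pairs with $i+j\le m$ and $i+j\ge m+2$. This gives $\prod_{i<j}\gamma(s,\xi_i\xi_j\eta^{-1},\psi)$ with no unit ambiguity. General $\tau$ is reached not by factorization but by realizing $\pi_\lambda$, for $\lambda$ in a nonempty open set, as a quotient of some $I(\xi)$; this is where the cuspidal-support hypothesis enters. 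Whittaker models and $\gamma$-factors then coincide.

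\textbf{Analytic continuation and continuity.} Holomorphy of $\oZ^\circ$ on a dense open set of $\lambda$ plus the identity theorem gives neither a holomorphic function on all of $\BC\times\BC^r$ nor continuity in $f$. The paper proceeds in three steps:
\begin{itemize}
\item it writes $\eta(-1)^{mn}\varepsilon=\epsilon_{1/2}^2$ to symmetrize \eqref{eq:FE_m};
\item it combines the functional equation on an open set with the vertical-strip bounds of Proposition~\ref{prop:conv0} to invoke \cite[Proposition~2.8.1]{BP21};
\item it gets continuity from a Cauchy-integral representation.
\end{itemize}

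\textbf{Part (4).} Data making $\oZ_{\mathrm{JS}}$ entire and nonzero at $s_0$ give $\oZ^\circ_{\mathrm{JS}}(s_0,W,\phi,\varphi_m^{-1})=0$ whenever $s_0$ is a pole of $\oL(s,\pi_\lambda,\wedge^2\otimes\eta^{-1})$, and your hypothesis does not exclude this. The hypothesis $\max\Re(\lambda)<\min\Re(\lambda)+1/2$ serves to separate poles. Those of $\oL(s,\pi_\lambda,\wedge^2\otimes\eta^{-1})$ lie in $\Re(s)\le\Re(\eta)-2\min\Re(\lambda)$, and those of $\oL(1-s,\pi_\lambda^\vee,\wedge^2\otimes\eta)$ lie in $\Re(s)\ge 1+\Re(\eta)-2\max\Re(\lambda)$, so they never meet. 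One applies Proposition~\ref{prop:nonv} on whichever side is regular at $s_0$ and, if needed, transports nonvanishing through \eqref{eq:FE_m}.

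That proposition is proved by prescribing $W|_{P_{2n}}\in C_c^\infty(N_{2n}\backslash P_{2n},\psi_{2n})$ (via Gelfand--Kazhdan, Jacquet, Kemarsky, and Dixmier--Malliavin). It is not proved by shrinking the support of $f$, which does not control the Jacquet integral $W_f$.
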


As in \eqref{zcirc}, the left-hand side of \eqref{eq:FE_m} is defined by
\[
\oZ^\circ_{\mathrm{JS}}(1-s, \sigma'_m\cdot \widetilde W, \widehat{\phi}, \varphi_m) := \frac{\oZ_{\mathrm{JS}}(1-s, \sigma'_m\cdot \widetilde W, \widehat{\phi}, \varphi_m)}{\oL(1-s, \pi_\lambda^\vee,\wedge^2\otimes\eta)}\in \mathbb{C},
\]
where $\oZ_{\mathrm{JS}}(1-s, \sigma'_m\cdot \widetilde W, \widehat{\phi}, \varphi_m)$ is defined analogously to \eqref{JSint} and converges absolutely when 
\[
\Re(1-s) > 2\max\Re(\lambda) - \Re(\eta)\quad (\textrm{for $m>1$}).
\]
Note that $\breve{\pi}_\lambda$ and $\pi_\lambda^\vee$ are also induced representations of the form \eqref{nt}, and that \[
\oL(s, \breve{\pi}_\lambda, \wedge^2\otimes\eta) = \oL(s, \pi_\lambda^\vee, \wedge^2\otimes\eta) \qquad \textrm{(defined as in \eqref{exL2nt}).}
\]

Recall that every $\pi \in \operatorname{Irr}_{\mathrm{gen}}(G_m)$ is isomorphic to an induced representation  of the form \eqref{nt}, for which we can similarly define
\[
\oZ^\circ_{\mathrm{JS}}(s, W, \phi, \varphi_m^{-1}) = \frac{\oZ_{\mathrm{JS}}(s, W, \phi, \varphi_m^{-1}) }{\oL(s, \pi, \wedge^2\otimes\eta^{-1})},\quad W\in \mathcal{W}(\pi,\psi_m), \ \phi\in \mathcal{S}(\bk^n).
\]
In particular, we have the following:
\begin{itemize}
\item Theorem \ref{thm:FE_m} applies to all $\pi \in \operatorname{Irr}_{\mathrm{gen}}(G_m)$ when $\bk$ is archimedean.
\item If $\pi_\lambda\otimes |\eta|^{-\frac{1}{2}}$ is nearly tempered, then the condition in Theorem \ref{thm:FE_m} (4) clearly holds. Here $|\eta|^{-\frac{1}{2}}$ denotes the character $|\eta(\det(\cdot))|^{-\frac{1}{2}}$ of $G_m$.
\end{itemize}

\begin{remarkl}

For non-archimedean local fields and trivial $\eta$, the absolute convergence (for sufficiently large $\Re(s)$), meromorphic continuation, and functional equation of the Jacquet--Shalika integrals \eqref{JSint} were established in \cite{KR12, M14, CM15, Jo20}. However, it remained an open question whether the local exterior square $\varepsilon$-factors arising from that functional equation coincide with the Artin local factors in \eqref{exL} (see \cite{CST17, Sh24}).

In the archimedean setting, much less was known about these integrals. The archimedean theory developed in Theorem~\ref{thm:FE_m} is complete except for the non-vanishing of the normalized integrals; nonetheless, the non-vanishing result in part~(4) covers the nearly tempered case, which is sufficient for the applications in \cite{JLST26}. Importantly, our method recovers the expected Artin local factors, a feature essential for arithmetic applications.



\end{remarkl}

\subsection{Open orbit integrals and modifying factors}

Our proof of Theorem~\ref{thm:FE_m} is purely local and follows the strategy of \cite{LLSS23}, which studies modifying factors for the Rankin--Selberg convolution case. The idea is to compare the Jacquet--Shalika integrals for principal series representations with integrals over the open orbit under the action of the Shalika subgroup $S_m$ on a certain variety, as given in Theorem~\ref{thm:MF_m}. 

Such a comparison yields certain modifying factors (see \eqref{mfactor}), which in the non-archimedean case are compatible with the conjecture for $p$-adic $L$-functions due to Coates and Perrin-Riou \cite{CPR89, C89}. This type of phenomenon has been observed for several families of period integrals (see \cite{LSS21, LLSS23, LS25}).

To explain the details, we introduce an $S_m$-variety $\mathcal{X}_m$ as follows. Let $\mathcal{B}_m := \overline{B}_m \backslash G_m$ be the flag variety on which $G_m$ acts from the right. Define $\mathcal{X}_m := \mathcal{B}_m \times \bk^n$.

We have specified a right action of $S_m$ on $\bk^n$ when $m$ is even in \eqref{Sact}. If $m = 2n+1$, then the right action of $S_m$ on $\bk^n$ is given by
\begin{equation} \label{Sact'}
(v,h) \mapsto v \cdot h := (v+x)g,
\quad \text{where } v \in \bk^n,\;
h = \begin{bmatrix}
g & Xg & y \\
0 & g & 0 \\
0 & xg & 1
\end{bmatrix} \in S_{2n+1}.
\end{equation}
The diagonal action of $S_m$ on $\mathcal{X}_m$ has a unique Zariski-open orbit, with base point
\begin{equation} \label{basept}
x_m :=
\begin{cases}
(\overline{B}_m z_m, v_n), & m = 2n, \\
(\overline{B}_m z_m, 0), & m = 2n+1,
\end{cases}
\end{equation}
where
\be\label{vn}
v_n := (1,1,\dots,1) \in \bk^n,
\ee
and
\begin{equation} \label{zm}
z_m :=
\begin{cases}
\begin{bmatrix} 1_n & 0 \\ 0 & w_n \end{bmatrix}, & m = 2n, \\[1em]
\begin{bmatrix} 1_n & 0 & 0 \\ 0 & w_n & {}^t v_n \\ 0 & 0 & 1 \end{bmatrix}, & m = 2n+1.
\end{cases}
\end{equation}
Moreover, the stabilizer of $x_m$ in $S_m$ is trivial.

Let $\xi = (\xi_1,\xi_2,\dots,\xi_m) \in (\widehat{\bk^\times})^m$. View it as a character of $\overline{B}_m$ in the obvious way, and put
\be\label{defxi}
I(\xi) := \operatorname{Ind}^{G_m}_{\overline{B}_m}(\xi)
\qquad(\text{normalized smooth induction}).
\ee
For $f \in I(\xi)$, $\phi \in \mathcal{S}(\bk^n)$, and $s \in \mathbb{C}$, formally define the \emph{open orbit integral}
\begin{equation} \label{openint}
\Lambda_{\mathrm{JS}}(s, f, \phi, \varphi_m^{-1}) :=
\begin{cases}
\displaystyle \int_{S_m} f(z_m h) \cdot \bigl(R_{\varphi_m^{-1}}(h) \phi\bigr)(v_n) \cdot |h|_{\bk}^{s/2} \,\od\! h, & m = 2n, \\[8pt]
\displaystyle \int_{S_m} f(z_m h) \cdot \bigl(R_{\varphi_m^{-1}}(h) \phi\bigr)(0) \cdot |h|_{\bk}^{s/2} \,\od\!h, & m = 2n+1.
\end{cases}
\end{equation}

Denote by $W_f \in \mathcal{W}(I(\xi), \psi_m)$ the Whittaker function associated to $f$ and $\psi_m$ via the Jacquet integral
\be\label{wf}
W_f(g) = \int_{N_m} f(ug) \overline{\psi_m}(u) \,\od\! u, \quad g\in G_m
\ee
in the sense of holomorphic continuation (see \cite{J67} and \cite[Theorem 15.4.1]{W92} for details).
Put 
\[
\widetilde{\xi} := (\xi_m^{-1}, \dots, \xi_1^{-1})\in (\widehat{\bk^\times})^m.
\]
For every $f \in I(\xi)$, write
\begin{equation}\label{wtf}
  \widetilde{f}(h) := f(w_m \, {}^t h^{-1}), \quad h \in G_m\ {\rm and}\ w_m\ {\rm is\ the\ longest\ element\ in}\ \FS_m.  
\end{equation}
Then $\widetilde{f} \in I(\widetilde{\xi})$ and
\[
\widetilde{W_f} = W_{\widetilde{f}} \in \mathcal{W}(I(\widetilde{\xi}), \overline{\psi}_m).
\]
Here and thereafter, by abuse of notation, we write $W_{\widetilde{f}}$ for the Whittaker function associated to $\widetilde{f}$ and $\overline{\psi}_m$; this should not cause any confusion.

The group $(\widehat{\bk^\times})^m$ is naturally an $m$-dimensional  complex Lie group. 
 Let $\CM$ be a connected component of it. 
 A \emph{standard section} on $\CM$ is a family $\{ f_{\xi} \in I(\xi) \}_{\xi \in \CM}$ such that $f_{\xi}|_{K_m}$ does not depend on $\xi \in \CM$. 

When $m>1$, we define
\be
\begin{aligned} \label{minmax'}
& \Omega_\eta^m :=
\left\{
(s, \xi) \in \mathbb{C} \times (\widehat{\bk^\times})^m
\;\middle|\;
\begin{array}{l}
\Re(\xi_1) < \Re(\xi_2) < \cdots < \Re(\xi_m), \\[2pt]
-2\Re(\xi_1) < \Re(s) - \Re(\eta) < 1 - 2\Re(\xi_m)
\end{array}
\right\} \\
\subset\ & 
\widehat\Omega_\eta^m :=
\left\{
(s, \xi) \in \mathbb{C} \times (\widehat{\bk^\times})^m
\;\middle|\;
\begin{array}{l}
\Re(\xi_1) < \Re(\xi_2) < \cdots < \Re(\xi_m), \\[2pt]
-2\Re(\xi_1) < \Re(s) - \Re(\eta) < 1 - 2\Re(\xi_{m-1})
\end{array}
\right\} 
\end{aligned}
\ee
where $\xi = (\xi_1,\xi_2,\dots,\xi_m)$ as before. When $m=0$ or $1$, define 
\[
   \Omega_\eta^m:=\widehat\Omega_\eta^m:=\mathbb{C} \times (\widehat{\bk^\times})^m
\]
Note that in all cases $\Omega_\eta^m$ intersects $\BC\times \CM$.  

Recall  the local gamma factor  $\gamma(s, \omega, \psi)$ defined in Tate's thesis, for each character $\omega: \bk^\times\rightarrow \BC^\times$  (see \cite{T50, K03}). It is a meromorphic function of $s\in \BC$.
The relevant analytic properties of the open orbit integrals $\Lambda_{\mathrm{JS}}(s, f, \phi, \varphi_m^{-1})$ are established in the following theorem.

\begin{thml}
\label{thm:FE'_m}
Let $\phi \in \mathcal{S}(\bk^n)$.
\begin{enumerate}
\item For $(s, \xi) \in \widehat\Omega_\eta^m$ and $f \in I(\xi)$, the integral $\Lambda_{\mathrm{JS}}(s,f,\phi,\varphi_m^{-1})$ in \eqref{openint} converges absolutely. If moreover $(s,\xi)\in \Omega_{\eta}^m$, then 
\begin{equation} \label{eq:FE'}
\Lambda_{\mathrm{JS}}(1-s, \sigma'_m \cdot \widetilde{f}, \widehat{\phi}, \varphi_m)
=
\eta(-1)^{mn}\cdot 
\left(\prod_{i=1}^n \gamma(s, \xi_i \xi_{m+1-i} \eta^{-1}, \psi)\right)
\cdot \Lambda_{\mathrm{JS}}(s,f,\phi,\varphi_m^{-1}),
\end{equation}
where $\wt{f}$ is defied in \eqref{wtf}, $\sigma_m'$ is defined in \eqref{taum}, and 
\[
\widehat{\phi} :=
\begin{cases}
\mathcal{F}_\psi(\phi), & \text{if } m \text{ is even},\\
\mathcal{F}_{\overline{\psi}}(\phi), & \text{if } m \text{ is odd}.
\end{cases}
\]

\item Let $\{ f_{\xi} \in I(\xi) \}_{\xi\in \CM}$ be a standard section on a connected component $\CM$ of $(\widehat{\bk^\times})^m$. Then the function
\[
\widehat\Omega_\eta^m \cap (\mathbb{C} \times \CM) \to \mathbb{C},
\quad (s, \xi) \mapsto \Lambda_{\mathrm{JS}}(s, f_\xi, \phi, \varphi_m^{-1})
\]
has a meromorphic continuation to $\mathbb{C} \times \CM$. 
\end{enumerate}
\end{thml}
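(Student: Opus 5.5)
We prove Theorem~\ref{thm:FE'_m} by making the open orbit integral explicit in coordinates. Since the stabilizer of $x_m$ in $S_m$ is trivial and the orbit is Zariski open, the map $h\mapsto x_m\cdot h$ identifies $S_m$ with an open dense subset of $\mathcal{X}_m$. The integral $\Lambda_{\mathrm{JS}}$ is then a pairing of the section $f\otimes\phi$ on $\mathcal{B}_m\times\bk^n$ against a character on the open orbit.

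First we parametrize $h\in S_m$. For $m=2n$ write $h=\begin{bmatrix} g&Xg\\0&g\end{bmatrix}$ with $g=\bar b k$, using the Iwasawa or Bruhat decomposition of $G_n$ relative to $\overline{B}_n$. The unipotent variable $X$ in $z_m h$ can then be moved across $z_m$. The goal is to rewrite $z_m\begin{bmatrix}1&X\\0&1\end{bmatrix}\begin{bmatrix}g&0\\0&g\end{bmatrix}$ in $\overline{B}_m\cdot(\text{a unipotent cell})$ form. After changing variables, the $X$-integration becomes a product of one-dimensional integrals of the shape $\int_{\bk}\xi_i\xi_{m+1-i}\eta^{-1}(\cdot)\,|\cdot|^{s}\,\psi(\cdot)\,\cdots$, one for each pair $(i,m+1-i)$. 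The factor $\psi(\operatorname{tr}X)$ from $\varphi_m^{-1}$ pairs the $i$-th and $(m+1-i)$-th coordinates, which is exactly why the characters $\xi_i\xi_{m+1-i}\eta^{-1}$ appear. The odd case $m=2n+1$ is handled the same way: the extra variables $x,y$ are absorbed by the induction defining $R_{\varphi_{2n+1}}$ and by the extra column ${}^tv_n$ in $z_m$, and $\xi_{n+1}$ pairs with nothing.

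\textbf{Convergence.} We bound $|f|$ by the spherical vector of $I(\Re\xi)$ and reduce to a positive integral over $A_n\times M_n$ (times the extra variables when $m$ is odd). Each one-dimensional piece is of Tate type, integrated against a Schwartz function or against the decay of $f$ along the $X$-direction. The resulting conditions are $\Re(\xi_i)<\Re(\xi_{i+1})$ from the Gindikin--Karpelevich type estimates, together with $\Re(s)-\Re(\eta)>-2\Re(\xi_1)$ at the zero end. The upper bound $\Re(s)-\Re(\eta)<1-2\Re(\xi_{m-1})$ comes from integrability at infinity. This gives absolute convergence on $\widehat\Omega^m_\eta$.

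\textbf{Functional equation.} We use a uniqueness argument. On the open orbit, $S_m$-equivariant functionals $I(\xi)\otimes\mathcal{S}(\bk^n)\otimes|\cdot|^{s/2}\to\mathbb{C}$ restricted to the open-orbit part form a one-dimensional space, since the stabilizer is trivial. Both sides of \eqref{eq:FE'} define such functionals, so they agree up to a scalar depending on $(s,\xi)$. To compute the scalar we avoid multiplicity-one statements on the closed orbits altogether. Instead we test on $f$ supported in the open Bruhat cell, with $\phi$ chosen so that the integral factors as a product of Tate integrals. The left side involves $\widetilde f$, $\widehat\phi$ and $\sigma'_m$, and the transformation $h\mapsto {}^th^{-1}$ together with $\sigma'_m$ maps the open orbit to itself. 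Under this map the Fourier transform produces, in each pair, Tate's local functional equation with character $\xi_i\xi_{m+1-i}\eta^{-1}$. This yields $\prod_{i=1}^n\gamma(s,\xi_i\xi_{m+1-i}\eta^{-1},\psi)$, and the sign $\eta(-1)^{mn}$ arises from the determinant of $w_m$ and $\sigma'_m$. The test functions form a dense set. By continuity of both sides on $\Omega^m_\eta$ (the region where both integrals converge), the identity then holds for all $f$ and $\phi$.

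\textbf{Meromorphic continuation.} For (2) we decompose $f_\xi$ via a partition of unity on $\mathcal{B}_m$, or in the archimedean case via the explicit coordinates above. This expresses $\Lambda_{\mathrm{JS}}$ as a finite sum of multivariable Tate-type integrals of Schwartz-like functions against $\prod|a_i|^{\text{linear in }(s,\xi)}\chi_i(a_i)$. Such integrals admit meromorphic continuation, by standard results on Igusa-type or homogeneous distributions, uniformly over standard sections. The main obstacle I expect is the explicit change of variables in the first step, namely realizing $z_m h$ in Bruhat-cell coordinates so that the integral genuinely factors, especially in the odd case. The convergence estimates near the boundary of the orbit, needed for the sharp region $\widehat\Omega^m_\eta$, are a secondary difficulty.
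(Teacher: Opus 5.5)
\textbf{Functional equation.} Your uniqueness argument does not close.

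\emph{What it actually gives.} Triviality of the stabilizer only shows that the two sides of \eqref{eq:FE'} are proportional on sections supported in the open $S_m$-orbit of $\CX_m$. To reach all of $I(\xi)\otimes\CS(\bk^n)$ you need one of two things. The first is vanishing of invariant functionals supported on the boundary orbits, which you explicitly set aside. The second is density, and density is false. Sections supported in an open cell are not dense in $I(\xi)$: evaluation at a point off the cell is continuous and kills them, and in the non-archimedean case every subspace is closed for the finest locally convex topology.

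\emph{The scalar is never computed.} Taking $f$ supported in the open Bruhat cell of $\CB_m$ does not make $f\otimes\phi$ supported in the open orbit, since that orbit is not a product.

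\emph{What the paper does instead.} It needs neither uniqueness nor density. In the even case, the identities ${}^tz_{2n}^{-1}=z_{2n}$ and $w_{2n}z_{2n}\sigma'_{2n}=z_{2n}$, the substitution $h\mapsto\wh h$, and Proposition~\ref{prop:Reven} rewrite the left side as $\int_{S_{2n}}f(z_{2n}h)\,\CF_\psi\bigl(R_{\varphi_{2n}^{-1}}(h)\phi\bigr)(v_n)\,|h|_\bk^{s/2}\,\od h$. One then integrates over $A_n^\dag\bsl S_{2n}$ and $A_n^\dag$. The torus $A_n^\dag$ is exactly the stabilizer of $\overline B_{2n}z_{2n}$ in $S_{2n}$. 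Since $z_{2n}a^\dag z_{2n}^{-1}=\diag(a_1,\dots,a_n,a_n,\dots,a_1)$, one has $f(z_{2n}a^\dag h)=\prod_i(\xi_i\xi_{2n+1-i})(a_i)\,f(z_{2n}h)$. Meanwhile $A_n^\dag$ acts on $\bk^n$ coordinatewise with open orbit through $v_n$. So for each fixed $h$ the inner integral is an $n$-variable Tate integral of $R_{\varphi_{2n}^{-1}}(h)\phi$ (resp.\ its Fourier transform) against $\prod_i\xi_i\xi_{2n+1-i}\eta^{-1}$. Tate's local functional equation, applied pointwise in $h$, then gives \eqref{eq:FE'} for every $f$ and $\phi$.

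\emph{The odd case.} Here $\sigma'_m$ and the MVW involution do not fix the base point: $z'_{2n+1}=w_{2n+1}{}^tz_{2n+1}^{-1}\sigma'_{2n+1}\neq z_{2n+1}$. One needs Lemma~\ref{lem:z'} to move it back by $h_0\in S_{2n+1}$, together with the twisted torus $A_n'=u^{-1}A_n^\ddag u$ of Lemma~\ref{An'}. The sign $\eta(-1)^{n}$ comes from $\det g_0=(-1)^n$, not from $w_m$ and $\sigma'_m$.

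\textbf{Convergence and part (2).} Both sketches rest on a factorization into one-variable Tate pieces that does not exist. Left $\overline B_m$-equivariance makes $f(z_mh)$ multiplicative only along the stabilizer torus. The rest of $A_n^\dag\bsl S_{2n}$ (the variable $X\in M_n$ and the non-toral part of $G_n$) enters through $f$ non-multiplicatively.

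\emph{Convergence.} For absolute convergence on the sharp region $\widehat\Omega^m_\eta$, the paper uses Gindikin--Karpelevich only to show that, for positive $f$, $\int_{M_n}f\left(\begin{bmatrix}g_1&g_1X\\0&g_2\end{bmatrix}\right)\od X$ lies in $I(\xi^1)|\cdot|_\bk^{-n/2}\widehat\otimes I(\xi^2)|\cdot|_\bk^{n/2}$. It then invokes the convergence theorem \cite[Proposition~1.4]{LLSS23} for open-orbit integrals on $\CB_n\times\CB_n\times\bk^n$, with base point $(\overline B_n,\overline B_nw_n,v_n)$. Your ``integrability at infinity'' is not a substitute for that input.

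\emph{Meromorphic continuation.} A partition of unity plus Igusa/Bernstein--Gelfand theory does not apply as stated. In coordinates on $\CX_m$ the factor $\psi(\operatorname{tr}X)$ is $\psi$ of a rational function with poles along the orbit boundary, so you are not integrating $\prod_i|P_i|^{s_i}$ against a Schwartz function. The paper obtains (2) only at the very end, as a corollary. On $\widehat\Omega^m_\eta$, Theorem~\ref{thm:MF_m} gives $\Lambda_{\mathrm{JS}}(s,f_\xi,\phi,\varphi_m^{-1})=\prod_{1\le i<j\le m-i}\gamma(s,\xi_i\xi_j\eta^{-1},\psi)\cdot\oZ_{\mathrm{JS}}(s,W_{f_\xi},\phi,\varphi_m^{-1})$. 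The right side continues meromorphically by Theorem~\ref{thm:FE_m}~(2). That comparison with Jacquet--Shalika integrals is the missing ingredient; it is proved by induction on $m$ via Godement sections, using part (1).
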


Note that the condition $(s,\xi) \in \widehat\Omega^m_\eta$ implies that the value $\gamma(s, \xi_i \xi_{m+1-i} \eta^{-1}, \psi)$ in \eqref{eq:FE'} is a well-defined nonzero complex number. 

\begin{remarkl} \label{rmk:Omega}
 It is easy to see that \[
 \Omega_{\eta^{-1}}^m= \{ (1-s, \widetilde{\xi})  \mid (s,\xi) \in \Omega_{\eta}^m \}.
 \]
 Thus the first assertion in Theorem~\ref{thm:FE'_m} (1) implies that the defining integral of $\Lambda_{\mathrm{JS}}(1-s, \sigma'_m \cdot \widetilde{f}, \widehat{\phi}, \varphi_m)$ also converges absolutely when $(s,\xi) \in \Omega_\eta^m$.
\end{remarkl}

We obtain a relation between the Jacquet-Shalika integral $\oZ_{\mathrm{JS}}(s, W_f, \phi, \varphi_m^{-1})$ and the open orbit integral $\Lambda_{\mathrm{JS}}(s, f, \phi, \varphi_m^{-1})$.

\begin{thml}
\label{thm:MF_m}
For $(s,\xi) \in \widehat\Omega^m_\eta$ given in \eqref{minmax'}, $f \in I(\xi)$, and $\phi \in \mathcal{S}(\bk^n)$, it holds that 
\be\label{mfactor}
\Lambda_{\mathrm{JS}}(s,f,\phi,\varphi_m^{-1})
=
\left(\prod_{1 \leq i < j \leq m-i} \gamma(s, \xi_i \xi_j \eta^{-1}, \psi)\right)
\cdot
\oZ_{\mathrm{JS}}(s, W_f, \phi, \varphi_m^{-1}).
\ee
\end{thml}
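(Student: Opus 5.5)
The plan is to follow the strategy of \cite{LLSS23}: factor $S_m$ through the quotient $\overline{S}_m$ and reduce the comparison to a unipotent integral that is computed by iterated Tate integrals. Write $U:=\sigma_m^{-1}N_m\sigma_m\cap S_m$, so that $\overline S_m=U\backslash S_m$ as in \eqref{defsm}. Since $\varphi_m^{-1}$ is trivial or a character on $U$, compatible with $\psi_m$ after conjugation by $\sigma_m$, we can unfold
\[
\Lambda_{\mathrm{JS}}(s,f,\phi,\varphi_m^{-1})=\int_{\overline S_m}\Bigl(\int_U f(z_m u h)\,\chi(u)\,\od u\Bigr)\bigl(R_{\varphi_m^{-1}}(h)\phi\bigr)(\cdot)\,|h|_\bk^{s/2}\,\od h .
\]
Here $\chi$ is the appropriate character of $U$, and $(\cdot)$ denotes the evaluation point $v_n$ or $0$ as in \eqref{openint}. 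The evaluation point on the $\phi$-side must be moved from $v_n$ to $e_n$ by a torus element absorbed into $z_m$; I would handle this by a preliminary change of variables. Fubini is justified by the absolute convergence on $\widehat\Omega^m_\eta$ from Theorem~\ref{thm:FE'_m}(1).

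The key claim is then a partial Jacquet-integral identity. For $f$ in the open cell and $(s,\xi)$ in a suitable subregion, the inner integral $\int_U f(z_m u g)\chi(u)\,\od u$ should equal
\[
\prod_{1\le i<j\le m-i}\gamma(s,\xi_i\xi_j\eta^{-1},\psi)\;W_f(\sigma_m g)
\]
up to the twisting by $|h|^{s/2}\eta$ coming from the $\phi$-factor. The mechanism is that the dimension of $N_m$ exceeds that of $U$. The missing root coordinates $x_{ij}$ (pairs $i<j\le m-i$) are supplied by the Shalika group's $X$-variables through $\psi(\tr X)$ and the torus action on $\phi$. Each such coordinate contributes a one-dimensional Tate integral $\int_{\bk^\times}\omega(a)|a|^{s}\psi(a)\,\od^\times a=\gamma$-type factor with $\omega=\xi_i\xi_j\eta^{-1}$. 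Concretely, I would prove the identity by induction on $m$. One passes from $m$ to $m-2$ by peeling off the first and last roots (for $m=2n$, the block corresponding to $\xi_1$ and $\xi_{m}$), using the Bruhat-type decomposition $z_m=\sigma_m\,w\,n$ and a rank-one reduction in each root direction. Each step applies the $\GL_2$ identity that the Jacquet integral equals a Tate gamma factor times an open-orbit integral. The odd case $m=2n+1$ would be reduced similarly, with the extra row handled by the $x,y$ variables of $S_{2n+1}$ and the induction \eqref{Sodd}.

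Once the identity holds on an open subset of $\widehat\Omega^m_\eta\cap(\BC\times\CM)$ for standard sections, both sides extend holomorphically or meromorphically in $(s,\xi)$. The left side is handled by Theorem~\ref{thm:FE'_m}(2), and the right side by Theorem~\ref{thm:FE_m}(2) together with the holomorphy of Jacquet integrals. Uniqueness of continuation then gives \eqref{mfactor} on all of $\widehat\Omega^m_\eta$, where both sides converge absolutely. Density of open-cell sections, combined with continuity of both functionals in $f$, removes the support restriction.

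The main obstacle is the middle step: organizing the unipotent variables so that the change of variables from $S_m$-coordinates to $N_m$-coordinates is triangular, with the torus variable pairing exactly with $\xi_i\xi_j\eta^{-1}$ for the pairs $i<j\le m-i$. This must be done while keeping the iterated Tate integrals absolutely convergent in a nonempty region of $\widehat\Omega^m_\eta$. I would first work out $m=2,3,4$ explicitly to find the right ordering of roots and then set up the induction.
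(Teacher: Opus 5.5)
Your proposal has a genuine gap at the step you yourself flag as the main obstacle, and the identity you formulate there is false as stated.

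On $U=\sigma_m^{-1}N_m\sigma_m\cap S_m$ the $G_n$-component lies in $N_n$, so $|u|_\bk=1$ and $\varphi_m^{-1}(u)=\overline\psi(\tr X)$. Your unfolded formula assumes the evaluation point of $\phi$ is $U$-invariant. Under that assumption, the inner integral $\int_U f(z_m u h)\chi(u)\,\od u$ depends neither on $s$ nor on $\eta$. So it cannot equal $\prod_{1\le i<j\le m-i}\gamma(s,\xi_i\xi_j\eta^{-1},\psi)\,W_f(\sigma_m h)$.

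Structurally, the inner integral integrates $f$ over only $n^2$ dimensions for $m=2n$ (resp.\ $n^2+n$ for $m=2n+1$). By contrast, $W_f$ needs all $\dim N_m=2n^2-n$ (resp.\ $2n^2+n$). It is a partial Jacquet integral, not a multiple of $h\mapsto W_f(\sigma_m h)$. Your count is right that the $n(n-1)$ (resp.\ $n^2$) missing root directions match the number of gamma factors. But that is exactly why each one must be traded for a Tate integral $\int_{\bk^\times}\omega(a)|a|_\bk^{s}\psi(a)\,\od^\times a$ whose torus variable comes from $\overline S_m$, the only place where $|h|_\bk^{s/2}$ and $\eta$ enter.

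Your qualifier ``up to the twisting by $|h|^{s/2}\eta$'' concedes this. But then the comparison is no longer pointwise in $h\in\overline S_m$. It needs a change of variables mixing the outer $\overline S_m$-variables with the $N_m$-coordinates of the Jacquet integral, and the unfolding over $U\backslash S_m$ does not organize that. You give no mechanism for it beyond computing $m=2,3,4$ and a vague $m\to m-2$ peeling. That peeling does not explain how a level-$(m-2)$ Jacquet--Shalika integral would emerge. Since this trade is the whole content of the theorem, the proposal is not yet a proof.

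Several supporting steps also fail.
\begin{enumerate}
\item For $n\ge 2$ no torus element moves $v_n$ to $e_n$. You need a unipotent one, e.g.\ $u_n$ with $v_nu_n=e_n$.
\item Sections supported in an open dense subset are not dense in $I(\xi)$. In the archimedean case their closure consists of sections flat on the complement; in the non-archimedean case they form a proper subspace. So continuity in $f$ cannot remove your support restriction.
\item Theorem~\ref{thm:FE'_m}(2) and Theorem~\ref{thm:FE_m}(2) are deduced in the paper from the statement you are proving, so citing them is circular. Your continuation step only needs holomorphy of both sides on the connected set $\widehat\Omega^m_\eta\cap(\BC\times\CM)$, which Lemma~\ref{lem:CC} and Corollary~\ref{cor:conv} provide.
\end{enumerate}

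For comparison, the paper avoids any root-by-root bookkeeping. It argues by induction $m\to m+1$, simultaneously with Theorem~\ref{thm2:FE_m}, writing $f'\in I(\xi')$ as a Godement section $\mathrm{g}^+_{\Phi,f,\xi'}$. These sections span $I(\xi')$ by Proposition~\ref{prop:gs}, which replaces any density argument.

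This yields two recurrences:
\begin{itemize}
\item $\oZ_{\mathrm{JS}}(s,W_{f'},\phi,\varphi_{m+1}^{-1})$ becomes an integral over $G_m$ (and over $R_n$ when $m$ is odd) of level-$m$ integrals $\oZ_{\mathrm{JS}}(s,\dots)$;
\item $\Lambda_{\mathrm{JS}}(s,f',\phi,\varphi_{m+1}^{-1})$ becomes the corresponding integral of level-$m$ integrals $\Lambda_{\mathrm{JS}}(1-s,\dots)$, after a partial Fourier transform.
\end{itemize}
The bridge between them is $(\mathrm{FE}_m)$, obtained from $(\mathrm{MF}_m)$ and the elementary open-orbit functional equation (Tate's thesis on a torus of $S_m$). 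This is followed by $(\mathrm{MF}_m)$ applied to $(1-s,\widetilde\xi)$. The new modifying factor comes out as $\gamma(s,I(\xi),\wedge^2\otimes\eta^{-1},\psi)$ times the level-$m$ factor for $(1-s,\widetilde\xi)$.
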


As before, the condition $(s,\xi) \in \widehat\Omega^m_\eta$ implies that the value  $\gamma(s, \xi_i \xi_j \eta^{-1}, \psi)$ in \eqref{mfactor} is a well-defined nonzero complex number.  Also 
note that by Theorem~\ref{thm:FE_m} (1), the  integral $\oZ_{\mathrm{JS}}(s, W_f, \phi, \varphi_m^{-1})$ in \eqref{mfactor} converges absolutely since $(s,\xi) \in \widehat\Omega^m_\eta$.

\begin{remarkl}
The work of Beuzart-Plessis (\cite{BP21}) on the archimedean theory of local Asai--Rankin--Selberg integrals uses a global method. By choosing an auxiliary split place for a quadratic extension of number fields, he reduces the problem to the known Rankin--Selberg case (\cite{JPSS83, J09}). His global method is not applicable to the Jacquet--Shalika case and relies on the comparison between the Langlands--Shahidi local factors and the Artin local factors. In contrast, our approach is purely local, and the result on modifying factors has important applications to the archimedean period relations in \cite{JLST26}.
\end{remarkl}

\section{Some properties of Jacquet-Shalika Integrals}\label{sec:analytic}


\subsection{Preliminaries on Whittaker functions} \label{sec:WF}

For preparation, we briefly recall some general results from \cite{BP21}. Let $G$ be a connected reductive linear algebraic group over a local field $\bk$ of characteristic zero. Denote by $A_G$ the largest central split torus in $G$ and by $X^*(G)$ the group of algebraic characters of $G$ defined over $\bk$. Put
\[
\mathcal{A}_G^* := X^*(G) \otimes \mathbb{R} = X^*(A_G) \otimes \mathbb{R}
\quad\text{and}\quad
\mathcal{A}_{G,\mathbb{C}}^* := X^*(G) \otimes \mathbb{C} = X^*(A_G) \otimes \mathbb{C}.
\]
We identify $\mathcal{A}_{G,\mathbb{C}}^*$ with the group of unramified characters of $G(\bk)$ by viewing $\lambda = \sum_{i=1}^k \beta_i \otimes s_i \in X^*(G) \otimes \mathbb{C}$ ($k \ge 0$) as the unramified character
\[
g \in G(\bk) \mapsto g^\lambda := \prod_{i=1}^k |\beta_i(g)|_{\bk}^{s_i} \in \mathbb{C}^\times.
\]

Assume that $G$ is quasi-split and fix a Borel subgroup $B=TN$ of $G$, where $T$ is a Levi factor and $N$ is the unipotent radical.  Denote by $\delta_B$ the modular character of $B$. Fix a maximal compact subgroup $K$ of $G(\bk)$ such that the Iwasawa decomposition $G(\bk) = B(\bk)K$ holds.

Let $\Delta \subset X^*(A_T)$ be the set of simple roots of $A_T$ determined by $N$. As usual, for $\alpha \in \Delta$, denote by $\alpha^\vee$ the corresponding simple coroot. Define the closed negative Weyl chamber
\[
\overline{(\mathcal{A}_T^*)^+} :=
\{\, \lambda \in \mathcal{A}_T^* \mid \langle \lambda, \alpha^\vee \rangle \le 0 \text{ for all } \alpha \in \Delta \,\}.
\]
Let $W^G := N_G(T)/T$ be the Weyl group of the pair $(G,T)$, where $N_G(T)$ is the normalizer of $T$ in $G$.  For each $\lambda \in \mathcal{A}_T^*$, denote by $|\lambda|$ the unique element in $(W^G \cdot \lambda) \cap \overline{(\mathcal{A}_T^*)^+}$. Define a partial order $\prec$ on $\mathcal{A}_T^*$ by \footnote{The partial order here is the inverse of the one in 
\cite[Section 2.1]{BP21}. This adjustment is necessary for \cite[Proposition 2.6.1]{BP21} to hold and does not affect the rest of that paper.} 
\[
\lambda \prec \mu
\quad\text{if and only if}\quad
\lambda - \mu = \sum_{\alpha \in \Delta} x_\alpha \alpha
\text{ where } x_\alpha > 0 \text{ for every } \alpha \in \Delta.
\]

Fix an algebraic homomorphism $\imath: G \rightarrow \SL_k(\bk)$ ($k \ge 1$) with kernel $A_G$, and define the log-norm
\begin{equation} \label{log-norm}
\bar{\sigma}(g) :=
\sup \bigl( \{1\} \cup \{\, \log \abs{\imath(g)_{i,j}}_{\bk} \mid i,j = 1,2,\dots, k \,\} \bigr),
\quad g \in G.
\end{equation}
Here $\imath(g)_{i,j}$ denotes the $(i,j)$-entry of the matrix $\imath(g)$.

Let $\psi_N$ be a generic unitary character of $N(\bk)$. For every $\lambda \in \mathcal{A}_T^*$, let $\mathcal{C}_\lambda(N\backslash G, \psi_N)$ be the LF space of Whittaker functions on $G(\bk)$ defined in \cite[Section 2.5]{BP21}, which depends only on $|\lambda|$. Its precise definition will not be recalled here.

We need the following estimate.

\begin{lem}[Lemma 2.5.1 of \cite{BP21}] \label{whit-est}
Let $\lambda \in \mathcal{A}_T^*$. For any $R, d > 0$, there exists a continuous semi-norm $p_{R,d}$ on $\mathcal{C}_\lambda(N\backslash G, \psi_N)$ such that
\[
|W(tk)|
\le
p_{R,d}(W)
\left( \prod_{\alpha \in \Delta} (1 + t^\alpha)^{-R} \right)
\delta_B(t)^{1/2} t^{|\lambda|} \bar{\sigma}(t)^{-d}
\]
for every $W \in \mathcal{C}_\lambda(N\backslash G, \psi_N)$, $t \in T(\bk)$, and $k \in K$.
\end{lem}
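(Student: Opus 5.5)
The plan is to argue directly from the definition of $\mathcal{C}_\lambda(N\backslash G,\psi_N)$ in \cite[Section 2.5]{BP21}. We take that definition to be the following (to be checked against \cite{BP21}): it is an inductive limit over $\mu\in\mathcal{A}_T^*$ with $|\mu|\prec|\lambda|$ strictly, and the $\mu$-th step is the Fréchet space of smooth $(N,\psi_N)$-equivariant functions $W$ with finite seminorms
\[
\sup_{t,k}\,|(R(u)W)(tk)|\,\delta_B(t)^{-1/2}t^{-|\mu|}.
\]
Here $u$ runs over $U(\mathfrak g)$ in the archimedean case; in the non-archimedean case $W$ is right invariant under a fixed open compact subgroup. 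By the Iwasawa decomposition $G(\bk)=B(\bk)K$ and $\psi_N$-equivariance, it suffices to bound $W(tk)$ uniformly in $k\in K$. A continuous seminorm on an LF space is the same as one continuous on each step. So it suffices to prove the estimate with a seminorm $p_{R,d}$ continuous on each $\mu$-step separately.

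\textbf{Step 1 (rapid decay in the simple roots).} Fix $\alpha\in\Delta$ and let $X_\alpha$ be in the root space with $d\psi_N(X_\alpha)\neq 0$. Since $W(n t k)=\psi_N(n)W(tk)$, differentiating along $\exp(\epsilon\,\mathrm{Ad}(t)X_\alpha)$ on the left gives
\[
t^{\alpha}\, d\psi_N(X_\alpha)\,W(tk)=\bigl(R(\mathrm{Ad}(k^{-1})X_\alpha)W\bigr)(tk).
\]
Write $\mathrm{Ad}(k^{-1})X_\alpha$ in a fixed basis of $\mathfrak g$ with coefficients bounded uniformly on the compact $K$. Iterating $R$ times, for each $\alpha$, bounds $(1+t^\alpha)^R|W(tk)|$ by finitely many of the defining seminorms applied to derivatives of $W$. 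This yields the bound $\delta_B^{1/2}t^{|\mu|}\prod_\alpha(1+t^\alpha)^{-R}$.

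In the non-archimedean case the argument is different. If $W$ is right invariant under an open compact $K'$, then for $n$ in $N_\alpha\cap K'$ we have
\[
W(tk)=W(tk\cdot k^{-1}nk)=\psi_N(tnt^{-1})W(tk),
\]
where we first shrink $K'$ to be normalized by $K$. Hence $W(tk)=0$ once $|t^\alpha|$ exceeds a constant depending only on $K'$. This gives compact support in the $t^\alpha$ directions.

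\textbf{Step 2 (converting the gap $|\mu|\prec|\lambda|$ into $\bar\sigma(t)^{-d}$).} Write $|\mu|-|\lambda|=\sum_\alpha x_\alpha\alpha$ with all $x_\alpha>0$, so that
\[
t^{|\mu|}=t^{|\lambda|}\prod_\alpha (t^\alpha)^{x_\alpha}.
\]
Split according to whether all $t^\alpha\le 1$ or some $t^\alpha>1$.
\begin{itemize}
\item If some $t^\alpha>1$, absorb $(t^\alpha)^{x_\alpha}$ together with the polynomial factor $\bar\sigma(t)^d$ into a larger exponent $R$ from Step 1. This uses that $\bar\sigma(t)\ll 1+\sum_\alpha|\log t^\alpha|$ modulo $A_G$; recall $\imath$ has kernel $A_G$.
\item If all $t^\alpha\le 1$, the factor $\prod_\alpha (t^\alpha)^{x_\alpha}$ decays exponentially in $\max_\alpha|\log t^\alpha|$, which dominates $\bar\sigma(t)^{d}$.
\end{itemize}
One must also handle mixed regimes, where some $t^\alpha$ are small and others large. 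There one combines the exponential smallness from the small $t^\alpha$ with the arbitrary polynomial decay in the large $t^\alpha$ from Step 1.

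\textbf{Main obstacle.} I expect the main obstacle to be Step 2, and specifically the bookkeeping that $\bar\sigma$ on $T(\bk)/A_G$ is comparable to $1+\max_\alpha|\log t^\alpha|$. This needs the simple roots to span $X^*(A_T/A_G)\otimes\mathbb R$, which holds since $\ker\imath=A_G$. Beyond that, one has to check that the constants depend continuously on the step $\mu$ only through finitely many seminorms. If the definition of $\mathcal{C}_\lambda$ in \cite{BP21} instead builds in only a polynomial loss $\bar\sigma(t)^N$ at the exponent $|\lambda|$ itself, I would first try to recover a strict gap: I would apply the Step 1 identity to the leading terms of the asymptotic expansion along each wall. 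This is where the reversed partial order of the footnote enters.
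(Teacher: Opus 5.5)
Your Step~1 is the actual content of the lemma, and it is the standard argument behind \cite[Lemma 2.5.1]{BP21}, which the paper quotes without proof. In the archimedean case, differentiating the $(N,\psi_N)$-equivariance along $\mathrm{Ad}(k^{-1})X_\alpha$ trades powers of $t^\alpha$ for derivatives of $W$ whose coefficients are bounded on $K$. For $\bk\cong\BC$ you need both $X_\alpha$ and ${\rm i}X_\alpha$, since $d\psi_N$ is only $\mathbb{R}$-linear and there $t^\alpha=|\alpha(t)|^2$. In the $p$-adic case, right invariance under an open compact subgroup normal in $K$ forces $W(tk)=0$ once some $t^\alpha$ is large. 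The resulting supremum seminorm is continuous on the LF space because it is continuous on each step. So the proposal is correct in substance.

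What needs revising is Step~2 and the definition it rests on. Your guessed $\mathcal{C}_\lambda(N\backslash G,\psi_N)$ cannot be the one in \cite{BP21}. Suppose $|\lambda|$ is central and $G$ is not a torus. Then no $|\mu|$ in the closed negative chamber satisfies $|\mu|\prec|\lambda|$. Indeed, put $\nu=|\mu|-|\lambda|=\sum_\beta x_\beta\beta$ and take a Weyl-invariant inner product; one gets $(\nu,\nu)=\sum_\beta x_\beta\frac{(\beta,\beta)}{2}\langle|\mu|,\beta^\vee\rangle\le 0$, contradicting $\nu\neq 0$. So your space would be $\{0\}$. The paper's footnote also says that reversing $\prec$ does not affect \cite{BP21} beyond Proposition~\ref{whit-incl}, so $\prec$ plays no role in the definition of $\mathcal{C}_\lambda$ or in this lemma. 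In \cite{BP21} the decay $\bar\sigma(t)^{-d}$, for every $d$ and at the exponent $|\lambda|$ itself, is built into the definition. The seminorms have the shape $\sup_{t,k}\delta_B(t)^{-1/2}t^{-|\lambda|}\bar\sigma(t)^{d}|(R(u)W)(tk)|$, with no derivatives and a union over open compact subgroups in the $p$-adic case. Hence Step~2 is unnecessary: multiply the Step~1 inequality by this weight and take the supremum. Your fallback for a definition allowing only polynomial loss at $|\lambda|$ would not work. Equivariance alone cannot produce $\bar\sigma(t)^{-d}$ for all $d$, and for such a space the lemma is false.
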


Let $\overline{P} $ be a parabolic subgroup of $G$ containing the opposite Borel subgroup $\overline{B}$, and let $M$ be the Levi factor of $\overline{P}$ containing $T$.
The restriction maps $X^*(G)\to X^*(M) \to X^*(T)$ induce linear embeddings 
\be\label{inj}
\mathcal{A}_{G,\BC}^*\hookrightarrow \mathcal{A}_{M,\BC}^* \hookrightarrow \mathcal{A}_{T,\BC}^*,
\ee
and the restriction maps $X^*(A_T)\to X^*(A_M) \to X^*(A_G)$ induce surjective linear maps 
\be\label{surj}
\mathcal{A}_{T,\BC}^*\twoheadrightarrow \mathcal{A}_{M,\BC}^* \twoheadrightarrow \mathcal{A}_{G,\BC}^*.
\ee
In view of \eqref{inj}, we view both $\mathcal{A}_{G,\BC}^*$ and $ \mathcal{A}_{M,\BC}^*$ as subspaces of $\mathcal{A}_{T,\BC}^*$.  The kernel of the second  map in \eqref{surj} is denoted by $(\mathcal{A}_{M,\mathbb{C}}^G)^*$, and the kernel of the composition of \eqref{surj} is denoted by $(\mathcal{A}_{T,\mathbb{C}}^G)^*$ . For every $\mu \in (\mathcal{A}^G_T)^*$ (the obvious real form of $(\mathcal{A}_{T,\mathbb{C}}^G)^*$), denote
\[
\mathcal{U}[\prec \mu] :=
\{\, \lambda \in (\mathcal{A}^G_{M,\mathbb{C}})^* \mid \abs{\Re(\lambda)} \prec \mu \,\}.
\]
Here $\Re(\lambda)\in (\mathcal{A}^G_M)^*$ denote the real part of $\lambda$.

Fix $\tau \in \operatorname{Irr}_{\mathrm{temp}}(M)$. For $\lambda \in \mathcal{A}_{M,\mathbb{C}}^*$, denote by $\tau_\lambda$ the unramified twist of $\tau$ by $\lambda$. Put
\[
\pi_\lambda := \operatorname{Ind}^{G(\bk)}_{\overline{P}(\bk)}(\tau_\lambda)
\quad\text{(normalized smooth induction)}.
\]
Suppose that we are given a nonzero Whittaker linear functional
\[
0 \neq \nu_{\tau,\psi} \in \operatorname{Hom}_{N(\bk) \cap M(\bk)}(\tau, \psi_N|_{N(\bk) \cap M(\bk)}),
\]
which induces a Whittaker linear functional
\[
J_\lambda \in \operatorname{Hom}_{N(\bk)}(\pi_\lambda, \psi_N)
\]
as in \eqref{whittaker1}. Similar to \eqref{whittaker2}, we have a space
\[
\mathcal{W}(\pi_\lambda,\psi_N) :=
\{\, W_\varphi \mid \varphi \in \pi_\lambda \,\}
\subset \operatorname{Ind}_{N(\bk)}^{G(\bk)} \psi_N
\qquad(\text{smooth induction})
\]
of Whittaker functions, equipped with the quotient topology, where $W_\varphi(g) = \langle J_\lambda, g \cdot \varphi \rangle$ for all $g \in G$.

We recall the following result from Proposition 2.6.1 and Corollary 2.7.1 in \cite{BP21}.

\begin{prpl} \label{whit-incl}
Let the notation be as above.
\begin{enumerate}
    \item For all $\lambda \in \mathcal{A}_{M,\mathbb{C}}^*$ and $\mu \in \mathcal{A}_T^*$ such that $|\Re(\lambda)| \prec \mu$,
    \begin{equation}\label{WsubsetC}
    \mathcal{W}(\pi_\lambda,\psi_N) \subset \mathcal{C}_\mu(N\backslash G, \psi_N),
    \end{equation}
    and the inclusion map is continuous.

    \item Let $\mu \in (\mathcal{A}^G_T)^*$. For every analytic section $\{ \varphi_\lambda \in \pi_\lambda \}_{\lambda \in \mathcal{U}[\prec \mu]}$ (see \cite[Section 2.3]{BP21}), the map
    \[
    \mathcal{U}[\prec \mu] \to \mathcal{C}_\mu(N\backslash G, \psi_N),
    \quad \lambda \mapsto W_{\varphi_\lambda},
    \]
    is analytic.
\end{enumerate}
\end{prpl}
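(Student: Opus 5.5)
Since this proposition is quoted from \cite[Proposition 2.6.1, Corollary 2.7.1]{BP21}, we only outline how it can be established, following the lines of \cite{BP21}. The approach is \emph{induction in stages plus analytic continuation}. Write $W_{\varphi}$ through the Jacquet-type integral \eqref{whittaker1}. By this formula,
\[
W_\varphi(g)=\int_{(N\cap M)\backslash N}\langle \nu_{\tau,\psi},\varphi(ug)\rangle\,\overline{\psi_N}(u)\,\od u ,
\]
so $W_\varphi$ is obtained from the $M$-Whittaker function $m\mapsto\langle\nu_{\tau,\psi},\tau(m)\varphi(g)\rangle$ by integrating over the unipotent radical of $\overline P$. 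Since $\tau$ is tempered, its Whittaker functions lie in the space $\mathcal C_0(N_M\backslash M,\psi_N)$. This is the Harish-Chandra--Wallach type bound, namely $\delta_{B\cap M}^{1/2}$ times rapid decay in the simple roots of $M$ times $\bar\sigma^{-d}$.

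\textbf{Step 1.} Take $\Re(\lambda)$ in a suitable positive cone, where the integral over $(N\cap M)\backslash N$ converges absolutely and uniformly. Combine the tempered bound with the Iwasawa decomposition $u=b(u)k(u)$ and the standard Gindikin--Karpelevich type estimates for $\int |\varphi(ug)|\,\od u$. This yields the bound of Lemma~\ref{whit-est} with exponent $\delta_B^{1/2}t^{\mu}$ for every $\mu$ with $|\Re\lambda|\prec\mu$, together with the rapid decay in the simple roots and the $\bar\sigma^{-d}$ factor. Continuity of $\varphi\mapsto W_\varphi$ into $\mathcal C_\mu$ then follows from the closed graph theorem, or directly from the estimates, since both sides are LF/Fréchet and evaluation maps are continuous.

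\textbf{Step 2.} Extend to all $\lambda$ with $|\Re\lambda|\prec\mu$. Here one exploits that $\mathcal C_\mu$ depends only on $|\mu|$, a $W^G$-invariant datum. Use the meromorphic functional equations of Whittaker functionals under intertwining operators, $J_{w\lambda}\circ M(w,\lambda)=c(w,\lambda)J_\lambda$, to move $\lambda$ into the convergent region. Alternatively, shift the exponent by a Bernstein-type argument: apply suitable elements of the Bernstein center/Hecke algebra, or use Dixmier--Malliavin to write $\varphi$ as a finite sum of convolutions. Either way, analytic continuation of the family $\lambda\mapsto W_{\varphi_\lambda}$ as $\mathcal C_\mu$-valued functions on $\mathcal U[\prec\mu]$ gives \eqref{WsubsetC}.

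\textbf{Step 3.} Prove the analyticity in (2) by the usual criterion for vector-valued holomorphy. First, $\lambda\mapsto W_{\varphi_\lambda}(g)$ is holomorphic for each $g$: this holds in the convergent range by differentiation under the integral, and elsewhere by continuation. Second, the seminorms $p_{R,d}(W_{\varphi_\lambda})$ are locally bounded in $\lambda$, by the uniformity of the estimates of Steps 1--2 on compact subsets of $\mathcal U[\prec\mu]$. Since point evaluations separate points of $\mathcal C_\mu$ and the space is a complete LF space, weak holomorphy plus local boundedness yields holomorphy.

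\textbf{Main obstacle.} The main obstacle is Step 2: obtaining bounds that are uniform in $\lambda$ near walls where the defining integral diverges, with the sharp exponent $|\Re\lambda|\prec\mu$. I would first try the intertwining-operator route. I expect the delicate point to be controlling the poles of $c(w,\lambda)$ and of $M(w,\lambda)$ inside $\mathcal U[\prec\mu]$; the strict inequality $\prec$ is what should keep these away.
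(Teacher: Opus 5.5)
\textbf{What the paper does.} The paper gives no argument of its own for this statement. It imports Proposition~2.6.1 and Corollary~2.7.1 of \cite{BP21}, with one change: $\prec$ is the reverse of the order in \cite{BP21} (footnote in Section~\ref{sec:WF}), which is needed for \cite[Proposition~2.6.1]{BP21} to hold. So your opening citation is exactly the paper's approach, and you use the correct convention for $\prec$.

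\textbf{The gap in Step~2.} The outline you add does not prove the statement, and the gap is where you suspect it. Step~2 is the whole content of part (1) outside the range of absolute convergence, and it is not carried out.
\begin{itemize}
\item Your phrase ``analytic continuation of $\lambda\mapsto W_{\varphi_\lambda}$ as $\mathcal{C}_\mu$-valued functions'' presupposes the conclusion. Continuing $\lambda\mapsto W_{\varphi_\lambda}(g)$ for each fixed $g$ says nothing about growth in $t$. You need bounds locally uniform in $\lambda$ before any $\mathcal{C}_\mu$-valued holomorphic function exists to be continued.
\item The intertwining-operator route cannot supply such bounds on walls. Take $\Ind_{\overline{B}_2}^{\GL_2}(\chi_1|\cdot|_{\bk}^{\lambda_1}\otimes\chi_2|\cdot|_{\bk}^{\lambda_2})$. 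For $\varphi$ not vanishing at $w_2$, the Jacquet integrand has size $|x|_{\bk}^{\Re(\lambda_1-\lambda_2)-1}$ at infinity, so absolute convergence forces $\Re\lambda_1<\Re\lambda_2$. The nontrivial Weyl element only swaps $\lambda_1,\lambda_2$, so points with $\Re\lambda_1=\Re\lambda_2$ are never moved into the convergent range. Moreover, your Step~1 constants blow up as you approach them, and $M(w,\lambda)$ has a pole at $\lambda_1=\lambda_2$ when $\chi_1=\chi_2$.
\item These points are not marginal. An antidominant element of $(\mathcal{A}^G_T)^*$ is a non-positive combination of simple roots, so $\mathcal{U}[\prec\mu]\neq\emptyset$ implies $0\prec\mu$. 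Hence the whole unitary axis lies in $\mathcal{U}[\prec\mu]$.
\end{itemize}
What is missing is a device giving the sharp bound uniformly up to and across walls. One option is a Phragm\'en--Lindel\"of argument in $\lambda$ with the weight $t^{-\mu}$ held fixed. Another is locally uniform control of the exponents in the asymptotic expansion of $W_{\varphi_\lambda}$. Dixmier--Malliavin and the Bernstein centre cannot do this: right convolution by $C_c^\infty$ functions and multiplication by scalars leave the growth in $t$ unchanged. Since the local boundedness in your Step~3 rests on Step~2, part (2) is not established either.

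\textbf{The input bound in Step~1 is misstated.} Whittaker functions of a tempered representation satisfy
\[
|W(tk)|\ll\prod_{\alpha}(1+t^\alpha)^{-R}\,\delta_B(t)^{1/2}\,\bar\sigma(t)^{d_0}
\]
for some $d_0\ge 0$. That is a logarithmic \emph{loss}, not $\bar\sigma(t)^{-d}$ for every $d$, so such functions do not lie in $\mathcal{C}_0$ in general. For example, take the spherical vector of the tempered representation $\Ind_{\overline{B}_2}^{\GL_2(\mathbb{R})}(1\otimes 1)$ with $\psi(x)=e^{2\pi i x}$. Then $W(\diag(a,1))$ is a multiple of $|a|^{1/2}K_0(2\pi|a|)$, which grows like $|a|^{1/2}\log(1/|a|)$ as $a\to 0$. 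This contradicts Lemma~\ref{whit-est} for $\lambda=0$.

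This is exactly why the statement requires the strict inequality $|\Re(\lambda)|\prec\mu$; for $M=G$ it only gives $\mathcal{W}(\tau)\subset\mathcal{C}_\mu$ for $0\prec\mu$. Write $|\Re\lambda|-\mu=\sum_\alpha x_\alpha\alpha$ with all $x_\alpha>0$. The slack $t^{|\Re\lambda|-\mu}=\prod_\alpha (t^\alpha)^{x_\alpha}$ decays exponentially in $\bar\sigma(t)$ wherever all $t^\alpha$ are bounded. Where some $t^\alpha$ is large, the rapid decay takes over. Together these absorb the logarithmic loss.

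Since your Step~1 only claims bounds for $\mu$ strictly above $|\Re\lambda|$, this is repairable, but the bound as you state it is false. A minor slip as well: $(N\cap M)\backslash N$ is the unipotent radical of the parabolic $P\supset B$ opposite to $\overline{P}$, not that of $\overline{P}$.
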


\subsection{Jacquet-Shalika integrals revisited} \label{sec2.2}

Now we retain the notation of the Introduction. 
We recall the explicit formulation of the local Jacquet-Shalika integrals following \cite{JS90, CM15}.

Since the element $\sigma'_m$ given by \eqref{taum} is fixed by the MVW involution $h\mapsto {}^t h^{-1}$ on $G_m=\GL_m(\bk)$, the involutions ${\rm Ad}(\sigma'_m)$ and the MVW involution commute. We introduce the following involution:
\be \label{inv}
G_m \to G_m,\qquad h\mapsto \widehat h:= \sigma'_m \, {}^t h^{-1} \sigma'_m.
\ee
One checks immediately that the Shalika subgroup $S_m$ is stable under this involution.

For $k\in \BN$, let $\frak q_k$ denotes the space of upper triangular matrices in $M_k$.
Recall the representation $R_{\varphi_m}$ of $S_m$ defined in Section~\ref{sec1.2.2}. When $m=2n$ is even, as in \cite{JS90}, the Jacquet-Shalika integral in \eqref{JSint} can be written explicitly as
\be \label{JSeven}
\begin{aligned}
\oZ_{\rm JS}(s, W, \phi,\varphi_{2n}^{-1})
&=
\int_{N_n \bsl G_n}
\int_{\frak q_n \bsl M_n}
W\!\left(\sigma_{2n}\begin{bmatrix} g & Xg \\ 0 & g \end{bmatrix}\right)
\overline\psi({\rm tr}\, X) \,dX \\
&\qquad\qquad\qquad\qquad\qquad\qquad\qquad
\phi(e_n g)\,\eta^{-1}(g)\, |g|_\bk^s \,d g.
\end{aligned}
\ee

We record the following result for later use.

\begin{prpl} \label{prop:Reven}
Suppose that $m=2n$. For $\phi \in \CS(\bk^n)$ and $h\in S_{2n}$, it holds that 
\[
R_{\varphi_{2n}}(\widehat h)\bigl( \CF_{\psi}(\phi) \bigr)
=
|h|_{\bk}^{1/2} \, \CF_{\psi}\bigl( R_{\varphi_{2n}^{-1}}(h)\phi \bigr),
\]
where $\widehat h$ is defined in \eqref{inv}.
\end{prpl}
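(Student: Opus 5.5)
This is a direct computation, and I would organize it as follows.

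First compute $\widehat h$ for $h=\begin{bmatrix} g & Xg\\ 0 & g\end{bmatrix}\in S_{2n}$. We have
\[
h^{-1}=\begin{bmatrix} g^{-1} & -g^{-1}X\\ 0 & g^{-1}\end{bmatrix},
\qquad
{}^t h^{-1}=\begin{bmatrix} {}^tg^{-1} & 0\\ -{}^tX\,{}^tg^{-1} & {}^tg^{-1}\end{bmatrix}.
\]
Conjugating by $\sigma'_{2n}$ swaps the two block rows and the two block columns, so
\[
\widehat h=\begin{bmatrix} {}^tg^{-1} & -{}^tX\,{}^tg^{-1}\\ 0 & {}^tg^{-1}\end{bmatrix}.
\]
Hence $\widehat h\in S_{2n}$, with $g$ replaced by $g':={}^tg^{-1}$ and $X$ replaced by $X':=-{}^tX$. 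Consequently
\[
\varphi_{2n}(\widehat h)=\eta({}^tg^{-1})\,\psi(-\operatorname{tr}{}^tX)=\eta(g)^{-1}\psi(-\operatorname{tr}X)=\varphi_{2n}(h)^{-1}=\varphi_{2n}^{-1}(h),
\]
and $v\cdot\widehat h=v\,{}^tg^{-1}$.

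Next evaluate both sides at $x\in\bk^n$. The left side is
\[
\varphi_{2n}^{-1}(h)\,\CF_\psi(\phi)(x\,{}^tg^{-1})
=\varphi_{2n}^{-1}(h)\int_{\bk^n}\phi(y)\,\psi\bigl(y\,g^{-1}\,{}^tx\bigr)\,\od y,
\]
since ${}^t(x\,{}^tg^{-1})=g^{-1}\,{}^tx$. The right side is
\[
|h|_\bk^{1/2}\,\varphi_{2n}^{-1}(h)\int_{\bk^n}\phi(yg)\,\psi\bigl(y\,{}^tx\bigr)\,\od y .
\]
Substituting $y\mapsto yg^{-1}$ in the last integral gives $\od(yg^{-1})=|g|_\bk^{-1}\od y$, which turns it into $|g|_\bk^{-1}\int\phi(y)\psi(yg^{-1}\,{}^tx)\,\od y$.

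Finally compare the scalar factors. Since $\det h=(\det g)^2$, we have $|h|_\bk^{1/2}=|g|_\bk$, and this cancels the Jacobian $|g|_\bk^{-1}$. The two sides therefore agree.

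The only point that needs care is bookkeeping: the sign of $X'$, which must produce $\psi(-\operatorname{tr}X)$ so that $\varphi_{2n}(\widehat h)$ equals $\varphi_{2n}^{-1}(h)$ exactly, and the convention for the normalized absolute value, which must give the Jacobian $|g|_\bk^{-1}$ under $y\mapsto yg^{-1}$. No analytic input is needed, since everything is a change of variables on Schwartz functions.
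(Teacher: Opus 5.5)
Your proposal is correct and follows essentially the same route as the paper. Both compute $\widehat h$ explicitly and note that $\varphi_{2n}(\widehat h)=\varphi_{2n}^{-1}(h)$ and $v\cdot\widehat h=v\,{}^tg^{-1}$. Both then make a linear change of variables in the Fourier integral, whose Jacobian $|g|_\bk^{\pm1}$ is absorbed by $|h|_\bk^{1/2}=|g|_\bk$. The only cosmetic difference is that you substitute in the right-hand side, while the paper substitutes in the left-hand side.
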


\begin{proof}
As before, write $h = \begin{bmatrix} g & Xg \\ & g \end{bmatrix}$. Then
$
\widehat h =
\begin{bmatrix} {}^t\!g^{-1} & -{}^t\!X \, {}^t\!g^{-1} \\ & {}^t\!g^{-1} \end{bmatrix}.
$
By the definition of the representation $R_{\varphi_{2n}}$ of $S_{2n}$ on $\CS(\bk^n)$ as in \eqref{Seven}, we have 
\[
R_{\varphi_{2n}}(\widehat h)\bigl( \CF_{\psi}(\phi) \bigr)(v)=\varphi_{2n}(\wh{h})\CF_\psi(\phi)(v\cdot\wh{h}). 
\]
By \eqref{Sact} and \eqref{FT}, we have $v\cdot\wh{h}=v{^tg^{-1}}$ and 
\begin{eqnarray*}
\CF_\psi(\phi)(v \cdot \widehat h)
&=&\int_{\bk^n} \phi(x)\,\psi(x\, {^t\!(v \, {}^t\!g^{-1})}) \,d x\\
&=&\int_{\bk^n} \phi(x)\,\psi(v \, {}^t\!g^{-1} \, {}^t\!x) \,d x
=\int_{\bk^n} \phi(x)\,\psi(v \, {}^t\!(xg^{-1}) \,d x.
\end{eqnarray*}
By changing variable: $xg^{-1}\mapsto x$, the last integral is equal to
\[
\CF_\psi(\phi)(v \cdot \widehat h)
=|g|_\bk \int_{\bk^n} \phi(xg)\,\psi(v \, {}^t\!x) \,d x
=|h|_\bk^{1/2}\varphi_{2n}(h)\CF_\psi(R_{\varphi_{2n}^{-1}}(h)(\phi))(v),
\]
for all $v\in\bk^n$. Since $\varphi_{2n}(\widehat h)=\varphi_{2n}^{-1}(h)$, we obtain the identity. 
\end{proof}

Next we elaborate on the odd case. Recall from \eqref{Sodd} the representation
\[
R_{\varphi_{2n+1}} := {\rm ind}^{S_{2n+1}}_{S_{2n+1}\cap P_{2n+1}}\varphi_{2n+1}
\]
(the unnormalized Schwartz induction). We view $\bk^n$ as a subgroup of $S_{2n+1}$ via the embedding
\be\label{embkn}
x\in \bk^n \longmapsto
\begin{bmatrix}
1_n & & \\
0 & 1_n & \\
0 & x & 1
\end{bmatrix}
\in S_{2n+1}.
\ee
The following is a variant of Propositions 3.1 and 3.2 in \cite{CM15}.

\begin{prpl}\label{prop:Rodd}
{\rm (1)} Via restriction through the embedding \eqref{embkn}, the representation $R_{\varphi_{2n+1}}$ is realized on $\CS(\bk^n)$ by
\[
\begin{aligned}
& \left( R_{\varphi_{2n+1}}\!\left(
\begin{bmatrix} g & \\ & g \\ & & 1 \end{bmatrix}
\right)\phi \right)(v)
= \eta(g)\,\phi(vg),\\[4pt]
& \left( R_{\varphi_{2n+1}}\!\left(
\begin{bmatrix} 1_n & X & 0 \\ & 1_n & 0 \\ & & 1 \end{bmatrix}
\right)\phi \right)(v)
= \psi({\rm tr}\, X)\,\phi(v),\\[4pt]
& \left( R_{\varphi_{2n+1}}\!\left(
\begin{bmatrix} 1_n & 0 & y \\ & 1_n & 0 \\ & & 1 \end{bmatrix}
\right)\phi \right)(v)
= \psi(-v y)\,\phi(v),\\[4pt]
& \left( R_{\varphi_{2n+1}}\!\left(
\begin{bmatrix} 1_n & & \\ 0 & 1_n & \\ 0 & x & 1 \end{bmatrix}
\right)\phi \right)(v)
= \phi(v+x),
\end{aligned}
\]
where $\phi\in\CS(\bk^n)$, $g\in G_n$, $X\in M_n$, $y\in \bk^{n\times 1}$, and $x,v\in \bk^{1\times n}$.

{\rm (2)} For $\phi\in\CS(\bk^n)$ and $h\in S_{2n+1}$, it holds that 
\[
R_{\varphi_{2n+1}}(\widehat h)\bigl( \CF_{\overline\psi}(\phi) \bigr)
=
|h|_\bk^{1/2} \, \CF_{\overline\psi}\bigl( R_{\varphi_{2n+1}^{-1}}(h)\phi \bigr),
\]
where $\widehat h$ is given by \eqref{inv}.
\end{prpl}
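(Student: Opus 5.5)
For part (1), I would first make the Schwartz induction $\operatorname{ind}^{S_{2n+1}}_{S_{2n+1}\cap P_{2n+1}}\varphi_{2n+1}$ explicit. An element of the induced space is a function $F$ on $S_{2n+1}$ with
\[
F(ph)=\varphi_{2n+1}(p)F(h)\qquad\text{for } p\in S_{2n+1}\cap P_{2n+1},
\]
which is Schwartz modulo $S_{2n+1}\cap P_{2n+1}$. The subgroup $\bk^n$ embedded via \eqref{embkn} is a complement: every $h\in S_{2n+1}$ factors uniquely as $h=p\,u(x)$ with $p\in S_{2n+1}\cap P_{2n+1}$ and $u(x)$ as in \eqref{embkn}. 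Concretely, the last row of $h$ is $(0,xg,1)$, so $u(x')$ is determined by $x'=xg$. Hence $F\mapsto \phi:=F|_{\bk^n}$ identifies the induced space with $\CS(\bk^n)$, and the action is $(R(h)\phi)(v)=F(u(v)h)$. For each generator in the statement I then rewrite $u(v)h=p'\,u(v')$ and read off $\varphi_{2n+1}(p')\phi(v')$.

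The four generators work out as follows.
\begin{itemize}
\item $h=\diag(g,g,1)$: here $u(v)h=h\,u(vg)$, which gives $\eta(g)\phi(vg)$.
\item $h$ the $X$-unipotent: it commutes with $u(v)$ up to a $y$-type element, because $u(v)$ acts only on the last row. This requires a short matrix computation and yields $\psi(\operatorname{tr}X)\phi(v)$.
\item $h$ the $y$-unipotent: conjugating past $u(v)$ produces an $X$-type unipotent with $X=-yv$, whose character value is $\psi(\operatorname{tr}(-yv))=\psi(-vy)$.
\item $h=u(x)$: trivially $u(v)u(x)=u(v+x)$, giving $\phi(v+x)$.
\end{itemize}
Since these four families generate $S_{2n+1}$, the formulas determine the realization, and this is consistent with the action \eqref{Sact'}.

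For part (2), it suffices to check the identity on generators of $S_{2n+1}$. Both sides define representations of $S_{2n+1}$: the map $h\mapsto \widehat h$ is a group automorphism, and $|h|^{1/2}$ is a character. So if the identity holds for $h_1$ and $h_2$, it holds for $h_1h_2$. I would compute $\widehat h$ for each generator:
\begin{itemize}
\item $\diag(g,g,1)\mapsto \diag({}^tg^{-1},{}^tg^{-1},1)$;
\item the $X$-unipotent maps to the $X$-unipotent with $-{}^tX$;
\item the $y$-unipotent (an entry in block $(1,3)$) maps under transpose-inverse and conjugation by $\sigma'_m$ to a $u(x)$ with $x=-{}^ty$;
\item symmetrically, $u(x)$ maps to the $y$-unipotent with $y=-{}^tx$.
\end{itemize}

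Then I verify each case using the formulas of part (1) applied to $\varphi^{\pm1}$. The replacement $\eta\to\eta^{-1}$ and $\psi\to\overline\psi$ corresponds to $R_{\varphi^{-1}_{2n+1}}$.
\begin{itemize}
\item The Levi case is exactly the computation in Proposition \ref{prop:Reven}, with $|g|_\bk=|h|_\bk^{1/2}$.
\item The $X$-case is trivial, since $\psi(\operatorname{tr}(-{}^tX))=\overline\psi(\operatorname{tr}X)$.
\item The remaining two cases are dual, because under $\CF_{\overline\psi}$ translation becomes multiplication by a character. For $h=u(x)$, the left side is multiplication by $\psi(-v\cdot(-{}^tx))=\psi(v\,{}^tx)$ applied to $\CF_{\overline\psi}\phi$. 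The right side is $\CF_{\overline\psi}$ of $\phi(\cdot+x)$, which equals $\overline\psi(-x\,{}^tv)\CF_{\overline\psi}\phi(v)=\psi(v\,{}^tx)\CF_{\overline\psi}\phi(v)$, so the two sides match.
\end{itemize}
This sign matching is exactly why $\CF_{\overline\psi}$, rather than $\CF_\psi$, appears in the odd case.

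The main obstacle is bookkeeping rather than ideas: getting the exact form of $\widehat h$ for the two unipotent families, and tracking the signs through $\varphi_{2n+1}$ versus $\varphi_{2n+1}^{-1}$. I would double-check these by a direct $3\times3$ block computation of $\sigma'_m\,{}^th^{-1}\sigma'_m$.
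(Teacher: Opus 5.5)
Your proposal is correct and gives a complete, self-contained proof. The paper has no argument of its own here: it only says the statement is a variant of Propositions 3.1 and 3.2 of \cite{CM15}. Your route is to trivialize the induced representation via the section $x\mapsto u(x)$, then check (2) on the four generating families, using that both sides are homomorphisms in $h$. This supplies the verification the citation leaves implicit.

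I checked the details.
\begin{enumerate}
\item The factorization $h=p\,u(xg)$, where $p$ has $X$-component $X-yx$, gives
\[
(R_{\varphi_{2n+1}}(h)\phi)(v)=\eta(g)\,\psi(\operatorname{tr}X)\,\psi(-(v+x)y)\,\phi((v+x)g),
\]
and this specializes to your four formulas.
\item Your four formulas for $\widehat h$ are right: the Levi element goes to ${}^tg^{-1}$, the $X$-unipotent to the one with $-{}^tX$, the $y$-unipotent to $u(-{}^ty)$, and $u(x)$ to the $y$-unipotent with $-{}^tx$.
\item The identity holds on each generator, including the $y$-unipotent case you left as ``dual''.
\item The reduction to generators is legitimate, because $h\mapsto\widehat h$ is a homomorphism preserving $S_{2n+1}$ and $|h|_\bk^{1/2}$ is a character.
\end{enumerate}

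One small inaccuracy: the $X$-unipotent commutes exactly with $u(v)$, with no $y$-type correction. This is harmless, since $\varphi_{2n+1}$ is trivial on the $y$-part anyway.
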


When $m=2n+1$ is odd, as in \cite{CM15}, the Jacquet-Shalika integral in \eqref{JSint} can be written explicitly as
\be \label{JSodd}
\begin{aligned}
\oZ_{\rm JS}(s, W, \phi,\varphi_{2n+1}^{-1})
&=
\int_{N_n \bsl G_n}
\int_{\frak q_n \bsl M_n}
\int_{\bk^n}
W\!\left(\sigma_{2n+1}
\begin{bmatrix} g & Xg & 0 \\ & g & 0 \\ & x & 1 \end{bmatrix}
\right)
\phi(x) \,\od\! x \\
&\qquad\qquad\qquad\qquad\qquad\qquad
\overline\psi({\rm tr}\, X) \,\od\!X \,
\eta^{-1}(g)\, |g|_\bk^{s-1} \,\od\! g.
\end{aligned}
\ee

To ease notation, for a subgroup $\CG$ of $G_n$, set
\be \label{dag}
\CG^\dag:=\{ g^\dag : g\in\CG \}\subset S_{2n}
\qquad\text{and}\qquad
\CG^{\ddag}:=\{ g^{\ddag} : g\in\CG \}\subset S_{2n+1},
\ee
where for $g\in G_n$ we write
\[
g^\dag := \begin{bmatrix} g & \\ & g \end{bmatrix} \in S_{2n}
\qquad\text{and}\qquad
g^{\ddag} := \begin{bmatrix} g & & \\ & g & \\ & & 1 \end{bmatrix} \in S_{2n+1}.
\]

\subsection{Convergence and continuity}\label{ssec:CC}

We apply the discussion in Section~\ref{sec:WF} to the case when $(G(\bk),B(\bk),T(\bk))=(G_m,B_m,A_m)$.  Then $\CA_T^*=\BR^m$, and the closed negative Weyl chamber is
\[
\overline{(\CA_T^*)^+}=
\left\{ \lambda = (\lambda_1,\ldots, \lambda_m)\in \BR^{m} \mid \lambda_1\leq \cdots\leq \lambda_m \right\}.
\]
For $\lambda = (\lambda_1,\ldots, \lambda_m)\in \CA_T^*$, we have that $|\lambda| = (\lambda_{w(1)},\ldots, \lambda_{w(m)})$ for any permutation $w\in \frak S_m$ such that $\lambda_{w(1)}\leq \cdots \leq \lambda_{w(m)}$. 
Similar to \eqref{minmax}, when $m>0$  we put
\[
\min \lambda := \min_{i=1,2,\dots, m} \lambda_i.
\]

We now introduce some additional notation that will be used later, for each  $k\in \BN$:
\begin{itemize}
\item 
$\delta_k$ denotes the modular character of 
$B_k = A_k N_k$; 
\item 
 $\bar{\frak v}_k$ denotes the space of strictly lower triangular matrices in $M_k$, so that $M_k = \frak q_k \oplus \bar{\frak v}_k$;

\item 
$U_k$ denotes the unipotent radical of the mirabolic subgroup $P_k$,  and $\overline{U}_k := {}^t U_k$ denotes the opposite unipotent radical;
\item 
$Z_k$ denotes the center of $G_k$.
\end{itemize}
For every subset $I\subset \R$, set 
\[
\CH_I:=\{x\in \BC\mid \Re(x) \in I\}.
\]
 A vertical strip is a subset of \(\BC\) of the form \(\CH_{[a,b]}\), where \(a,b\in \R\) with \(a<b\).

For later use, we recall the following well-known results (\cf~\cite[Theorem~2.27]{F99}), which are easy consequences of the dominated convergence theorem (cf.~\cite[Theorem~1.34]{Ru87}) and Morera's theorem (cf.~\cite[Theorem~10.17]{Ru87}).

\begin{lem}\label{lem:Lebesgue}
    Let $X$ be a measure space and let $\Omega$ be a smooth manifold. 
    Suppose that $f: X \times \Omega \to \BC$ is a function such that 
    $f(\cdot, z) \in L^1(X)$ for every $z \in \Omega$. 
    Define a function $F$ on $\Omega$ by 
    \[
    F(z) := \int_X f(x, z) \, \od\! x.
    \]
    
    \begin{enumerate}
        \item Assume that $f(x, \cdot)$ is smooth for almost every $x \in X$ (i.e., for all $x\in X$ outside a measure-zero set), 
        and that for every nonempty compact subset $K \subset \Omega$ and every 
        differential operator $D$ on $\Omega$, there exists a real-valued function 
        $f_{K, D} \in L^1(X)$ such that, for every $z \in K$,
        \[
        |Df(x, z)| \leq f_{K, D}(x) \quad \text{for almost every } x \in X.
        \]
       Then $F$ is smooth, and for every differential 
        operator $D$ on $\Omega$, it holds that 
        \[
        DF(z) = \int_X Df(x, z) \, \od\!x, \qquad z \in \Omega.
        \]
        
        \item Assume that $\Omega$ is a complex manifold, that $f(x, \cdot)$ is holomorphic 
        for almost every $x \in X$, and that for every nonempty compact subset $K \subset \Omega$, 
        there exists a real-valued function $f_K \in L^1(X)$ such that, for every $z \in K$,
        \[
        |f(x, z)| \leq f_K(x) \quad \text{for almost every } x \in X.
        \]
        Then $F$ is holomorphic, and for every holomorphic 
        differential operator $D$ on $\Omega$,
        \[
        DF(z) = \int_X Df(x, z) \, \od\!x, \qquad z \in \Omega,
        \]
        i.e., the integral on the right-hand side is absolutely convergent and equals 
        the left-hand side.
    \end{enumerate}
\end{lem}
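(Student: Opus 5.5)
This is the standard dominated-convergence-plus-Morera argument, so I only need to sketch it. Let $\{K_j\}$ be an exhaustion of $\Omega$ by compact sets.

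\textbf{Part (1).} Smoothness is local, so I fix a point $z_0\in\Omega$ and work in a coordinate chart around it. Let $K$ be a small closed coordinate ball centered at $z_0$.
\begin{itemize}
\item \emph{Continuity of $F$ on $K$.} Suppose $z_k\to z$ in $K$. Then $f(x,z_k)\to f(x,z)$ for almost every $x$, since $f(x,\cdot)$ is smooth and hence continuous for such $x$. The hypothesis with $D=1$ gives the domination $|f(x,z_k)|\le f_{K,1}(x)$, with $f_{K,1}\in L^1(X)$. Dominated convergence therefore yields $F(z_k)\to F(z)$.
\item \emph{First derivatives.} Take a coordinate vector field $\partial_i$. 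For a small real $t$, the mean value theorem applied along the segment from $z$ to $z+te_i$ (which stays in $K$) gives
\[
\Bigl|\tfrac{f(x,z+te_i)-f(x,z)}{t}\Bigr|\le \sup_{w\in K}|\partial_i f(x,w)|\le f_{K,\partial_i}(x)
\]
for almost every $x$. As $t\to0$ the difference quotient converges pointwise a.e. to $\partial_i f(x,z)$. Dominated convergence then shows that $\partial_iF(z)$ exists and equals $\int_X\partial_i f(x,z)\,\od x$.
\item \emph{Higher derivatives.} Replace $f$ by $\partial_i f$; it again satisfies the hypotheses, because every differential operator applied to $\partial_i f$ is still a differential operator applied to $f$. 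Inducting on the order gives all partial derivatives, each continuous by the first bullet. So $F$ is smooth.
\end{itemize}
A general operator $D$ is locally a smooth-coefficient combination of coordinate derivatives $\sum_\alpha a_\alpha\partial^\alpha$. Differentiating under the integral is linear, so the formula $DF(z)=\int_X Df(x,z)\,\od x$ follows.

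\textbf{Part (2).} I work in a holomorphic chart around $z_0$ and take $K$ to be a closed polydisc around $z_0$.
\begin{itemize}
\item \emph{Continuity.} This is exactly as in part (1), using the dominating function $f_K$.
\item \emph{Holomorphy.} Fix all coordinates but one. For any closed triangle $\Delta$ in that coordinate disc, $\int_X\int_{\partial\Delta}|f|\le |\partial\Delta|\int_X f_K<\infty$. Fubini therefore applies and gives
\[
\int_{\partial\Delta}F=\int_X\int_{\partial\Delta}f(x,\cdot)\,\od x=0,
\]
the inner integral vanishing by Cauchy--Goursat. Morera's theorem shows $F$ is holomorphic in each variable separately. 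Since $F$ is also continuous, Osgood's lemma gives joint holomorphy.
\item \emph{Derivatives.} Let $K'\subset K$ be a smaller polydisc. For $z\in K'$, Cauchy's integral formula on polycircles inside $K$ gives
\[
|\partial^\alpha f(x,z)|\le C_{\alpha,K,K'}\,f_K(x).
\]
Here there is a subtlety: $f_K$ bounds $f$ only for almost every $x$ at each fixed $z$. This is harmless, because $f(x,\cdot)$ is continuous for a.e.\ $x$, so the bound can first be imposed on a countable dense subset of $K$ (a countable union of null sets) and then passes to all of $K$ by continuity.

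With this bound, the argument of part (1) applies to holomorphic derivatives. Differentiation under the integral sign gives $DF(z)=\int_X Df(x,z)\,\od x$ for any holomorphic differential operator $D$, with absolute convergence.
\end{itemize}

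The only delicate point in the whole argument is this passage from the almost-everywhere bound at each fixed $z$ to a bound holding uniformly over $z\in K$. Both continuity in $z$ and the Cauchy estimates need the uniform version, and the continuity-plus-countable-dense-set argument above is what supplies it.
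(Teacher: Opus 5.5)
Your proposal is correct and is the argument the paper has in mind: the paper writes out no proof and only says the lemma is an easy consequence of the dominated convergence theorem and Morera's theorem, which is exactly what you carry out. The one point needing care is the countable-dense-subset-plus-continuity device that upgrades the $z$-wise almost-everywhere bounds to bounds uniform over $K$, and you handle it correctly; note that the mean-value step of Part (1) already relies on it, not only Part (2).
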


In view of Proposition~\ref{whit-incl}, we begin with the following result.

\begin{prpl} \label{prop:conv0} Suppose $m>1$ and let $\mu\in\CA_T^*$.
\begin{enumerate}
    \item 
    The map
    \be\label{zetaconv}
    \begin{array}{rcl}
    \CH_{(\Re(\eta) -2\min\mu,\infty)}
    \times \CC_\mu(N_m \bsl G_m, \psi_m)\times \CS(\bk^n)
    & \rightarrow & \BC,\\[2pt]
    (s,W,\phi) & \mapsto & \oZ_{\rm JS}(s, W, \phi, \varphi_m^{-1})
    \end{array}
    \ee
    is well defined by absolutely convergent integrals.
    
    \item The map \eqref{zetaconv} is continuous, holomorphic in the first variable, and linear in the last two variables.
    
    \item The map \eqref{zetaconv} is bounded on vertical strips in the following sense: 
    for all vertical strips $\CV \subset \CH_{(\Re(\eta)-2\min\mu, \infty)}$, there exist continuous semi-norms $p_{\CV}$ on $\CC_\mu(N_m \bsl G_m, \psi_m)$ and $q_{\CV}$ on $\CS(\bk^n)$ such that 
    \[
    |\oZ_{\rm JS}(s, W, \phi, \varphi_m^{-1})| \leq p_{\CV}(W) q_{\CV}(\phi)
    \]
    for all $(s,W,\phi)\in \CV\times \CC_\mu(N_m \bsl G_m, \psi_m)\times \CS( \bk^n)$. 
\end{enumerate}
\end{prpl}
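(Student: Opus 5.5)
Throughout I fix $\mu\in\CA_T^*$ and write $\mu_1\le\cdots\le\mu_m$ for the increasing rearrangement $|\mu|$, so that $\min\mu=\mu_1$.

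\textbf{Step 1: reduce to an integral over the torus and a compact group.} I start from the explicit formulas \eqref{JSeven} and \eqref{JSodd}. The domain $\overline S_m$ is parametrized by $g\in N_n\backslash G_n$, by $X\in\frak q_n\backslash M_n\cong\bar{\frak v}_n$, and, when $m$ is odd, by $x\in\bk^n$. On $N_n\backslash G_n$ I use the Iwasawa decomposition $g=ak$ with $a\in A_n$, $k\in K_n$, and the measure $\delta_n(a)^{-1}\,\od a\,\od k$. The key algebraic input is the conjugation
\[
\sigma_m\,(ak)^{\dag}\,\sigma_m^{-1}=\diag(a_1,a_1,a_2,a_2,\dots,a_n,a_n)\cdot \sigma_m k^{\dag}\sigma_m^{-1},
\]
and in the odd case there is an extra trailing entry $1$ and the superscript is $\ddag$. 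So the torus part lands in $A_m$ with interlaced diagonal $t=\diag(a_1,a_1,\dots,a_n,a_n[,1])$.

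\textbf{Step 2: control the unipotent variables.} The variables $X\in\bar{\frak v}_n$ (and $x$ in the odd case) are not compact, so I do not simply absorb them into $K$. Following Jacquet--Shalika and \cite{CM15}, I conjugate $\sigma_m\begin{bmatrix}1&X\\&1\end{bmatrix}\sigma_m^{-1}$ past the torus element, which rescales $X$ entrywise. I then apply the standard trick of Jacquet--Shalika (see also \cite[Prop.\ 3.3]{JS90}, \cite{Jo20}): write $\sigma_m u_X\sigma_m^{-1}=n\,t'\,k'$ in Iwasawa form, and use the bound $|t'^{\,\cdot}|\le$ a polynomial in $\|X\|$ for the dominating character. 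Combined with the rapid decay factor $\prod_{\alpha}(1+t^\alpha)^{-R}$ from Lemma~\ref{whit-est}, this makes the $X$-integral (and the $x$-integral) converge, with a bound of the form $p(W)\,\prod(1+|a_i/a_{i+1}|)^{-R'}\,\delta^{1/2}(t)\,t^{|\mu|}\,\bar\sigma(t)^{-d}$.

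I expect this step to be the main obstacle, since it requires a uniform estimate of $W(\sigma_m u_X t k)$ by something integrable in $X$. If the direct Iwasawa bound is too lossy, my fallback is the Jacquet--Shalika inductive "root exchange" argument, which replaces the $\bar{\frak v}_n$ integral by an integral of $W$ along a unipotent group with a Schwartz function attached.

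\textbf{Step 3: reduce to a Tate-type torus integral.} After Step 2, the Schwartz function $\phi(e_n ak)$ (or $\phi(x)$ in the odd case) controls $a_n$ near infinity. Each simple root factor $(1+|a_i/a_{i+1}|)^{-R}$ forces $|a_i|\lesssim|a_{i+1}|$. The remaining integrand behaves like $\prod_i|a_i|^{\Re(s)-\Re(\eta)+\mu_{2i-1}+\mu_{2i}+c_i}$ times the modular factors. One checks that the $\delta_m^{1/2}(t)$ shift on the interlaced torus cancels against $\delta_n^{-1}(a)$ and the Jacobian coming from $X$. Hence the integral over $|a_1|\le\cdots\le|a_n|\lesssim1$ converges once $\Re(s)-\Re(\eta)+2\mu_1>0$, using $\mu_j\ge\mu_1$. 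In the odd case $|g|^{s-1}$ is compensated by the $x$-integration. This proves (1), with a bound $|\oZ_{\rm JS}|\le p_{R,d}(W)\,q(\phi)$, where $q$ is a Schwartz seminorm such as $\sup|(1+\|v\|)^N\phi|$. That bound is uniform for $\Re(s)$ in compact subsets of the half-plane, which gives (3) on vertical strips.

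\textbf{Step 4: continuity and holomorphy.} Linearity in $(W,\phi)$ is clear. Holomorphy in $s$ follows from Lemma~\ref{lem:Lebesgue}(2), using the uniform dominating function from Step 3 on compact sets. Joint continuity follows from the strip bound (3) together with holomorphy: by bilinearity and uniform boundedness, $\oZ_{\rm JS}$ is locally Lipschitz in $(W,\phi)$ uniformly in $s$, and it is continuous in $s$ for fixed $(W,\phi)$, which gives continuity of \eqref{zetaconv}.
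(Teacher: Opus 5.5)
Your architecture is the paper's: the Iwasawa decomposition, the substitution $X\mapsto aXa^{-1}$, $\sigma_{2n}a^\dag=\tilde a\sigma_{2n}$, Lemma~\ref{whit-est}, the cancellation of $\delta_{2n}(\tilde a)^{1/2}=\delta_n(a)^2$ against the two factors $\delta_n(a)^{-1}$, a Tate-type torus integral with exponents $s+|\mu|_{2i-1}+|\mu|_{2i}$, and Lemma~\ref{lem:Lebesgue} for holomorphy. Getting joint continuity from the bilinear bound in (3) is also fine. But Step 2, which you flag yourself, is where the proof actually lives, and as written it does not give convergence of the $X$-integral. A polynomial \emph{upper} bound on $\delta_{2n}(t_X)^{1/2}t_X^{|\mu|}$, combined with ``the rapid decay factor'', proves nothing until you know that some of the decay factors actually decay in $X$. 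The missing idea is the following. Write $u_X=n_Xt_Xk_X$ with $t_X=\diag(t_1,\dots,t_{2n})$. For the $n$ simple roots $\alpha_{2i-1}$ one has $\tilde a^{\alpha_{2i-1}}=1$, so these factors in Lemma~\ref{whit-est} are $(1+|t_{2i-1}/t_{2i}|_\bk)^{-R}$, independent of $a$. The real input, Propositions 4 and 5 in Section 5 of Jacquet--Shalika, is then the \emph{lower} bound
\[
\prod_{i=1}^n\Bigl(1+\Bigl|\frac{t_{2i-1}}{t_{2i}}\Bigr|_\bk\Bigr)\ \ge\ \prod_{i=1}^n|t_{2i-1}|_\bk\ \ge\ e^{\alpha\bar\sigma(u_X)}\qquad(X\in\bar{\frak v}_n)
\]
for some $\alpha>0$. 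Only with this can you take $R$ large (depending on $\alpha$, $\mu$ and the strip) to beat the polynomial growth in $X$ uniformly in $s$. Your vague ``root exchange'' fallback is not a substitute for this estimate as stated.

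There are two smaller points. First, the remaining factors $(1+|a_it_{2i}/(a_{i+1}t_{2i+1})|_\bk)^{-R}$ couple $a$ and $X$. So the bound $\prod_i(1+|a_i/a_{i+1}|_\bk)^{-R'}$ you assert after the $X$-integration needs a decoupling step. The paper substitutes $a_i\mapsto a_it_i'$ with $t_i'=\frac{t_{2i+1}t_{2i+3}\cdots t_{2n-1}}{t_{2i}t_{2i+2}\cdots t_{2n-2}}$. This splits everything into an $X$-integral times a pure torus integral; the new factors $|t_i'|_\bk^{s+|\mu|_{2i-1}+|\mu|_{2i}}$ are again of polynomial growth and are absorbed by the bound above. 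A Peetre-type inequality would also work. Second, in the odd case there is no factor $\phi(e_nak)$, so the decay in $a_n$ must come from $W$ itself through the last simple root; note $\tilde a=\diag(a_1,a_1,\dots,a_n,a_n,1)$. Also, in the parametrization of \eqref{JSodd} the extra $|g|_\bk^{-1}$ is absorbed by $\delta_{2n+1}(\tilde a)^{1/2}=\delta_n(a)^2|\det a|_\bk$.
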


\begin{proof} We only prove the case where $m=2n$ is even. The odd case can be proved similarly with suitable modifications using the proof of 
Proposition 3 in \cite[Section 9]{JS90}, which will be omitted. 

By unramified twists, we assume without loss of generality  that $\eta$ is unitary so that $\Re(\eta)=0$. Write $\CV= \CH_{[C,D]}$ with 
\[
- 2\min\mu < C <D.
\]
By the Iwasawa decomposition $G_n=N_n A_n K_n$, we need to estimate the integral 
\[
\int_{A_n \times \bar{\frak v}_n\times K_n}  
\left| W \!\left( \sigma_{2n} \begin{bmatrix} 1_n & X \\ 0 & 1_n\end{bmatrix} (ak)^\dag \right) \phi(e_n ak) \right|  
\cdot | a|_\bk^s \, \delta_n(a)^{-1} \od\!a \od\! X\od\! k,
\]
where $(s, W,\phi)\in [C, D]\times \CC_\mu(N_m \bsl G_m, \psi_m)\times \CS( \bk^n)$.

For $X\in \bar{\frak v}_n$, introduce an element
\be \label{uX}
u_X := \sigma_{2n} \begin{bmatrix} 1_n & X \\ 0 & 1_n \end{bmatrix}\sigma_{2n}^{-1}.
\ee
By a change of variable $X\mapsto aXa^{-1}$, the above integral can be written as 
\[
\begin{aligned}
& \int_{A_n \times \bar{\frak v}_n\times K_n}  
\left|W\left(\sigma_{2n} a^\dag \begin{bmatrix}  1_n & X \\ 0 & 1_n\end{bmatrix} k^\dag \right)  \phi(e_n ak) \right| \cdot | a|^s_\bk \, \delta_n(a)^{-2}  \od\!a \od\! X \od\! k\\
= & \int_{A_n \times \bar{\frak v}_n\times K_n}  
|W(\tilde{a} u_X \sigma_{2n} k^\dag )  \phi(e_n ak) | \cdot | a|^s_\bk \, \delta_n(a)^{-2}  \od\!a \od\! X \od\! k,
\end{aligned}
\]
where for $a=\diag (a_1, a_2, \ldots, a_n)\in A_n$ we set
\[
\tilde{a} := \diag(a_1, a_1, a_2, a_2, \ldots, a_n, a_n)\in A_{2n}
\]
so that 
\[
\sigma_{2n} a^\dag=\tilde a \sigma_{2n}.
\]
Following 
the Iwasawa decomposition, we write 
\[
u_X = n_X t_X k_X \in N_{2n} A_{2n} K_{2n}\quad\textrm{and}\quad  
t_X =\diag(t_1,\ldots, t_{2n})\in A_{2n}.
\]
The above integral is 
\[
\int_{A_n \times \bar{\frak v}_n\times K_n}  
|W(\tilde a \, t_X k_X  \sigma_{2n} k^\dag ) \phi(e_n ak)|   \cdot  | a|^s_\bk \,  \delta_n(a)^{-2} \od\!a \od\! X \od\! k.
\]

For every $R>0$, define the following continuous semi-norm on $\CS(\bk^n)$:
\[
q_R(\phi):=\sup_{a=\diag (a_1, a_2, \ldots, a_n)\in A_n, k\in K_n} (1+|a_n|_\bk)^R |\phi(e_n ak)| <\infty. 
\]
It is straightforward to verify that 
$\delta_{2n}(\tilde a)^{1/2} = \delta_n(a)^2$. 
Thus by Lemma~\ref{whit-est}, the above integral is bounded by
\[
q_R(\phi)\cdot q_{R,1}(W)\cdot  \int_{A_n \times \bar{\frak v}_n} \left( \prod^{2n-1}_{i=1} (1 + (\tilde a t_X)^{\alpha_i})^{-R} \right)
\delta_{2n}( t_X)^{1/2} (\tilde a t_X)^{|\mu|} (1+|a_n|_\bk)^{-R} |a|^s_\bk  \od\!a \od\! X ,
\]
where $q_{R,1}$ is a continuous semi-norm on $\mathcal{C}_\mu(N_m\backslash G_m, \psi_m)$ as in  Lemma \ref{whit-est} (for $d=1$), and 
$\alpha_1, \alpha_2,\ldots, \alpha_{2n-1}\in X^*(A_{2n})$ are the simple roots with respect to $N_{2n}$.

Now we are reduced  to estimating 
\[
\begin{aligned}
\int_{A_n\times \bar{\frak v}_n} & \prod^n_{i=1}  \left(1+\left| \frac{t_{2i-1}}{t_{2i}} \right|_\bk\right)^{-R} \cdot \prod^{n-1}_{i=1} \left(1+\left| \frac{a_i t_{2i}}{a_{i+1}t_{2i+1}}\right|_\bk\right)^{-R} \cdot \delta_{2n}(t_X)^{1/2}  t_X^{|\mu|}\\
& \qquad\qquad\qquad\qquad \cdot  
(1+|a_n|_\bk)^{-R}  \prod^n_{i=1} |a_i|_\bk^{s+|\mu|_{2i-1}+|\mu|_{2i}} \od\! a \od\! X,
\end{aligned}
\]
where we write $|\mu| = (|\mu|_1,\ldots, |\mu|_{2n})$. Making the change of variables 
\[
a_i \mapsto a_i t_i',\quad \text{where}\quad t_i':=\frac{t_{2i+1} t_{2i+3}\cdots t_{2n-1}}{t_{2i} t_{2i+2}\cdots t_{2n-2}},\quad i=1,2,\ldots, n-1,
\]
we are further reduced to estimating the product of the integral
\be \label{int1}
\int_{\bar{\frak v}_n} \left( \prod^n_{i=1} \left(1+\left| \frac{t_{2i-1}}{t_{2i}} \right|_\bk\right)^{-R}\right)\cdot \mu_s(t_X)\od\!X
\ee
where 
\[
\mu_s(t_X) : = \left(\prod^{n-1}_{i=1} |t_i'|_\bk^{s+|\mu|_{2i-1}+|\mu|_{2i}}\right)\cdot \delta_{2n}(t_X)^{1/2}  t_X^{|\mu|},
\]
and the integral
\be \label{int2}
\int_{A_n} \left( \prod^{n-1}_{i=1} \left(1+\left| \frac{a_i }{a_{i+1}}\right|_\bk\right)^{-R}\right)\cdot 
(1+|a_n|_\bk)^{-R}\cdot 
 \prod^n_{i=1} |a_i|_\bk^{s+|\mu|_{2i-1}+|\mu|_{2i}} \od\! a.
\ee
  
By Propositions 4 and 5 in \cite[Section 5]{JS90}, there exists $\alpha>0$ such that 
\[
\prod^n_{i=1} \left(1+\left| \frac{t_{2i-1}}{t_{2i}} \right|_\bk\right) \geq \prod^n_{i=1} | t_{2i-1}|_\bk \geq e^{\alpha\cdot \bar\sigma(u_X)}\qquad \textrm{for all }X\in \frak{\bar v}_n,
\]
where $\bar\sigma$ is the log-norm \eqref{log-norm}.
Since $\mu_s(t_X)$ is uniformly of polynomial growth in $X$ for $s\in [C, D]$, 
we can assume that $R$ is sufficiently large (depending on $\alpha$, $\mu$, and $[C,D]$) such that the integrand in \eqref{int1} is bounded by an integrable function that 
does not depend on $s\in [C,D]$.

We further assume that 
\[
R > n(D+2 \max\mu).
\]
The integral \eqref{int2} can be estimated in the same way as in the proof of \cite[Lemma 3.3.1]{BP21}. By the elementary inequality 
\[
\left(\prod^{n-1}_{i=1} \left(1+\left| \frac{a_i }{a_{i+1}}\right|_\bk\right)^{-R}\right)\cdot 
(1+|a_n|_\bk)^{-R} \leq \prod^n_{i=1} (1+|a_i|_\bk)^{-R/n},
\]
and the convergence of the integral 
\[
\int_{\bk^\times} (1+|x|_\bk)^{-R/n} |x|_\bk^{R'} \od^\times\!x
\]
when $R > n R'>0$, we find that the integrand in \eqref{int2} is bounded by an integrable function that does not depend on $s\in [C, D]$. 
 
This implies parts (1) and (3) of the proposition. The continuity assertion of Part (2) then follows by the Lebesgue dominated convergence theorem, and the holomorphy assertion follows by Lemma~\ref{lem:Lebesgue} (the linearity assertion is obvious).
\end{proof}

The following result establishes the absolute convergence in Theorem~\ref{thm:FE_m} (1). 

\begin{corl}\label{cor:conv} Suppose $m>1$.  Let $\mu\in\CA_T^*$, and let $\pi_\lambda = \Ind^{G_n}_{\overline{P}}(\tau_\lambda)$ be given by \eqref{nt}. 
    \begin{enumerate}
        \item 
        The map
        \be\label{zetaconv2}
        \begin{array}{rcl}
        \CH_{( \Re(\eta)-2\min\mu,\infty)}
        \times \CU[\prec\mu]\times \pi_\tau\times \CS(\bk^n)
        & \rightarrow & \BC,\\[2pt]
        (s,\lambda,f,\phi) & \mapsto & \oZ_{\rm JS}(s, W_{f_\lambda}, \phi, \varphi_m^{-1})
        \end{array}
        \ee
        is well defined by absolutely convergent integrals.
        
        \item The map \eqref{zetaconv2} is continuous, holomorphic in the first two variables, and linear in the last two variables.
        
        \item The map \eqref{zetaconv2} is bounded on vertical strips in the following sense: 
        for all vertical strips $\CV \subset \CH_{(\Re(\eta)-2\min \mu, \infty)}$ and $\lambda\in \CU[\prec\mu]$, there exist continuous semi-norms $p_{\CV,\lambda}$ on $\pi_\tau$ and $q_{\CV}$ on $\CS(\bk^n)$ such that 
        \[
        |\oZ_{\rm JS}(s,  W_{f_\lambda}, \phi, \varphi_m^{-1})| \leq p_{\CV,\lambda}(f) q_{\CV}(\phi)
        \]
        for all $(s,f,\phi)\in \CV\times \pi_\tau\times \CS( \bk^n)$. 	
    \end{enumerate}
\end{corl}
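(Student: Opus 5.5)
The plan is to derive Corollary~\ref{cor:conv} by composing Proposition~\ref{prop:conv0} with Proposition~\ref{whit-incl}. We apply Section~\ref{sec:WF} with $G=\GL_m$ and the parabolic $\overline P$ of \eqref{nt}. For $\lambda\in\CU[\prec\mu]$ we have $|\Re(\lambda)|\prec\mu$, so Proposition~\ref{whit-incl}(1) gives a continuous inclusion $\CW(\pi_\lambda,\psi_m)\subset\CC_\mu(N_m\bsl G_m,\psi_m)$. The map $f\mapsto W_{f_\lambda}$ is the composition of the topological identification $\pi_\tau\cong\pi_\lambda$ from \eqref{indke} with the quotient map $\pi_\lambda\to\CW(\pi_\lambda,\psi_m)$. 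It is therefore continuous and linear from $\pi_\tau$ into $\CC_\mu$. Plugging $W=W_{f_\lambda}$ into Proposition~\ref{prop:conv0}(1) gives absolute convergence for $\Re(s)>\Re(\eta)-2\min\mu$, which is part (1).

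For part (3), fix $\CV$ and $\lambda$, and take $p_\CV,q_\CV$ from Proposition~\ref{prop:conv0}(3). Set $p_{\CV,\lambda}(f):=p_\CV(W_{f_\lambda})$. This is a continuous seminorm on $\pi_\tau$ because $f\mapsto W_{f_\lambda}$ is continuous linear, and the required bound follows directly.

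Part (2) is where the work is, specifically the joint continuity and holomorphy in $\lambda$. The family $\lambda\mapsto f_\lambda$ is exactly a standard (hence analytic) section in the sense of \cite[Section 2.3]{BP21}. So Proposition~\ref{whit-incl}(2) shows that $\lambda\mapsto W_{f_\lambda}\in\CC_\mu$ is analytic on $\CU[\prec\mu]$, in particular continuous and holomorphic. One first reduces to the case where $\lambda$ lies in $(\CA^G_{M,\BC})^*$ by twisting by a central unramified character $|\det|^{z}$. This twist shifts $s$ by $2z$ in $\oZ_{\rm JS}$, since $|h|^{s/2}$ and $W(\sigma_m h)$ both scale by determinant. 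The central direction is thus absorbed holomorphically into $s$, and we assume $\CU[\prec\mu]$ is understood in this way. Holomorphy in $(s,\lambda)$ then follows because the composition of the continuous bilinear-in-$(W,\phi)$, holomorphic-in-$s$ map \eqref{zetaconv} with the holomorphic map $\lambda\mapsto W_{f_\lambda}$ is separately holomorphic in $s$ and in $\lambda$, and it is locally bounded by part (3) together with the uniformity discussed next; Hartogs/Osgood then gives joint holomorphy.

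The main obstacle is getting joint continuity in $(s,\lambda,f,\phi)$ together with the local boundedness needed for Osgood. We need the seminorm bound to be locally uniform in $\lambda$, i.e.\ $\{W_{f_\lambda}\}$ should stay in a bounded subset of $\CC_\mu$ as $f$ ranges over a neighborhood and $\lambda$ over a compact set. I would obtain this from the analyticity in Proposition~\ref{whit-incl}(2), which gives continuity of $(\lambda,f)\mapsto W_{f_\lambda}$ in both variables; for the joint statement this uses that $\pi_\tau$ is Fréchet (or LF with the finest topology in the $p$-adic case), so that a uniform boundedness argument applies. Joint continuity of \eqref{zetaconv} from Proposition~\ref{prop:conv0}(2) then finishes the argument. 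Linearity in $(f,\phi)$ is clear.
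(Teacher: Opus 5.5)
Your proposal is correct and follows the paper's route. The paper's proof is the one line "follows immediately from Proposition~\ref{whit-incl} and Proposition~\ref{prop:conv0}", and your argument is the expanded version of that composition: continuity of $f\mapsto W_{f_\lambda}$ into $\CC_\mu$, analyticity in $\lambda$, Banach--Steinhaus for joint continuity, and Hartogs/Osgood for joint holomorphy. The central-twist reduction is harmless but not needed here, since $\CU[\prec\mu]$ is by definition already contained in $(\CA^G_{M,\BC})^*$.
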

    
\begin{proof}
This follows immediately from Propositions~\ref{whit-incl} and \ref{prop:conv0}.
\end{proof}

\subsection{A non-vanishing result}

We have the following non-vanishing result.

\begin{prpl} \label{prop:nonv}
Let $\pi \in \Irr_{\rm gen}(G_m)$. For every $s_0 \in \BC$, there exist finitely many 
$W_i \in \CW(\pi,\psi_m)$ and $\phi_i\in \CS(\bk^n)$ indexed by $i\in I$, such that the function 
\[
s\mapsto \sum_{i\in I} \oZ_{\rm JS}(s, W_i, \phi_i, \varphi_m^{-1}),
\]
which is defined for $\Re(s)$ sufficiently large, 
has a holomorphic extension to $\BC$ and is non-vanishing at the given $s_0\in\BC$.
\end{prpl}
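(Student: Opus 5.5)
We reduce to the standard strategy of Jacquet--Piatetski-Shapiro--Shalika and Jacquet--Shalika: choose $W$ and $\phi$ so that the integral collapses to an expression of the form $\int_{\bk^\times}\Phi(t)|t|^{s'}\,\od^\times t$ with $\Phi$ compactly supported in $\bk^\times$. Such a finite sum is entire and can be made nonzero at any prescribed point. We treat $m=2n$ in detail; the odd case follows by the same argument using \eqref{JSodd}, after first choosing $\phi$ concentrated near $0$ to absorb the $x$-integral.

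\textbf{Step 1: reduce to the mirabolic $P_n$.} In \eqref{JSeven} we use $N_n\backslash G_n = (N_n\backslash P_n)\times Z_n\times$ (compact part absorbed). More precisely, we write $g=zp$ with $z=t1_n\in Z_n$ and $p\in N_n\backslash P_n$. Then $e_ng=te_n$, so $\phi(e_ng)=\phi(te_n)$, and the integral becomes
\[
\int_{\bk^\times}\phi(te_n)\,\eta^{-1}(t)^{n}\,|t|^{ns}\,\omega_\pi(t)\Big(\int_{N_n\backslash P_n}\int_{\frak q_n\backslash M_n} W\big(\sigma_{2n}u(X)p^\dag\big)\,\overline\psi(\tr X)\,\eta^{-1}(p)|p|^{s}\,\od X\,\od p\Big)\,\od^\times t,
\]
with the appropriate modular factors. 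Here we use that $\sigma_{2n}z^\dag\sigma_{2n}^{-1}$ is central in $G_{2n}$.

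\textbf{Step 2: choose $W$.} We want the inner $(p,X)$-integral, as a function of $W$, to be a nonzero functional with compactly supported integrand. By the Kirillov-type theory for the Shalika-type restriction (\cite[Section 7]{JS90}, and its archimedean analogue via the Dixmier--Malliavin lemma and Jacquet's theorem that the restrictions of Whittaker functions to $P_{2n}$ contain $\operatorname{ind}_{N_{2n}}^{P_{2n}}\psi_{2n}$), we may take $W$ such that $p^\dag\mapsto W(\sigma_{2n}u(X)p^\dag)$ is a compactly supported smooth function modulo $N$ on the relevant subgroup. The key input is that the map $(X,p)\mapsto \sigma_{2n}u(X)p^\dag$ parametrizes an open subset of $N_{2n}\backslash P_{2n}$: this is the change of variables used in Proposition~\ref{prop:conv0}, via $u_X$. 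Consequently any $\Psi\in C_c^\infty$ on that open subset is realized as $W|_{P_{2n}}$ for some $W$ in the Kirillov model, possibly after a finite sum $\sum_i\pi(\varphi_i)W_i$. Taking $\Psi$ concentrated near $1$, the inner integral becomes an entire function of $s$ that is nonzero at $s_0$. It equals approximately $\Psi$-mass times $\eta^{-1}(p_0)|p_0|^{s_0}$.

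\textbf{Step 3: choose $\phi$.} We take $\phi$ with $t\mapsto\phi(te_n)$ supported in a small neighborhood of $1$ in $\bk^\times$. We also convolve with the center, i.e.\ replace $W$ by $\int W(\cdot\,\sigma z\sigma^{-1})\beta(z)\,\od z$, which is again of the form $\pi(\varphi)W$, so that the $t$-integral is a compactly supported smooth integral, hence entire in $s$ and nonzero at $s_0$ for $\phi$ near a bump at $1$. The $\oL$-factor plays no role here, since the statement concerns the unnormalized integral.

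\textbf{Main obstacle.} The hardest step is Step 2 in the archimedean case. There we must justify that restrictions of $W\in\CW(\pi,\psi_m)$ to the mirabolic contain all of $C_c^\infty(N_{2n}\backslash P_{2n},\psi)$, and that the unfolding in the $X$-variable produces a compactly supported integrand. I would first try to invoke the known archimedean Kirillov density result (Jacquet, Kemarsky) together with the Dixmier--Malliavin lemma. Working in the $u_X$-coordinates of Proposition~\ref{prop:conv0} turns the double integral into an integral over an open subset of $N_{2n}\backslash P_{2n}$, where a bump function gives the desired nonvanishing.
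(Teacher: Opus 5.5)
There is a genuine gap, located in Step 1 and inherited by your treatment of the odd case. The set $Z_nP_n$ has codimension $n-1$ in $G_n$, so for $n\ge 2$ it is a null set. Since $g\mapsto e_ng$ identifies $P_n\backslash G_n$ with $\bk^n\smallsetminus\{0\}$, the factor $\phi(e_ng)$ sees all of $\bk^n$, not only the line $\bk^\times e_n$. The remaining directions cannot simply be ``absorbed'': with $G_n=P_nZ_nK_n$, both $\phi(te_nk)$ and $W(\cdots k^\dag)$ depend on $k$. The correct transversal is the open dense set $P_nZ_n\overline U_n$. Taking $\phi(e_nz\bar u)=\varphi_Z(z)\varphi_{\overline U}(\bar u)$ does split off the Tate integral $\zeta(s,\varphi_Z)$. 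However, the $\bar u$-integral then replaces $W$ by the right convolution $R(\varphi_{\overline U})W$ along $\overline U_n^\dag$, and this group does not preserve $P_{2n}$: for $g\in P_{2n}$ and $\bar u\in\overline U_n$ with last row $(y,1)$, the last row of $g\bar u^\dag$ is $(0,\dots,0,y,1)$. So even if $W|_{P_{2n}}$ is compactly supported modulo $N_{2n}$, the integrand involves values of $W$ off the mirabolic, and you can no longer conclude that the integral is entire. Making $\varphi_{\overline U}$ an approximate identity only approximates the integral; it does not remove possible poles at $s_0$. Your Step 3 convolution over the center does not address this either: $\sigma_{2n}z^\dag\sigma_{2n}^{-1}$ is central and acts by $\omega_\pi$, so it only rescales $W$. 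The odd case has the same defect. In \eqref{JSodd} the $x$-integral is a right convolution along $\bar u_x$, and $\bar u_x\notin P_{2n+1}$, so concentrating $\phi$ near $0$ does not make it disappear.

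The missing idea is to apply Dixmier--Malliavin in the reverse direction, and this is exactly why the statement allows a finite sum. First fix $f\in C_c^\infty(N_{2n}\backslash P_{2n},\psi_{2n})$ and $W_0\in\CW(\pi,\psi_{2n})$ with $W_0|_{P_{2n}}=f$ (Gelfand--Kazhdan, Jacquet, Kemarsky; a single $W_0$ suffices for this step). Then write $W_0=\sum_{i\in I}R(\varphi_{\overline U,i})W_i$ with $\varphi_{\overline U,i}\in C_c^\infty(\overline U_n)$, and put $\phi_i(e_nz\bar u):=\varphi_Z(z)\varphi_{\overline U,i}(\bar u)$. This gives $\sum_i\oZ_{\rm JS}(s,W_i,\phi_i,\varphi_{2n}^{-1})=\zeta(s,\varphi_Z)\int f(u_X\sigma_{2n}p^\dag)\eta^{-1}(p)|p|_\bk^{s-1}\,\od p\,\od X$. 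Since $u_X\sigma_{2n}p^\dag\in P_{2n}$, this is visibly entire. In the odd case one applies the same device to the group $\{\bar u_x\}\cong\bk^n$.

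A smaller inaccuracy: in Step 2 the map $(X,p)\mapsto\sigma_{2n}u(X)p^\dag$ does not have open image in $N_{2n}\backslash P_{2n}$. The source has dimension $n(n-1)$ and the target has dimension $n(2n-1)$, so the image is only a locally closed submanifold. One can still choose $f$ so that the last integral is nonzero at $s_0$, but not by the ``any bump on an open subset'' reasoning.
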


\begin{proof} 
Again we only give the proof for the case $m=2n$ even; the odd case is similar and is omitted. The argument follows the lines of \cite[Lemma~3.3.3]{BP21}.

Note that $P_n Z_n \overline{U}_n \subset G_n$ is open and dense. By Corollary~\ref{cor:conv}, for $W\in \CW(\pi,\psi_m)$, $\phi\in \CS(\bk^n)$ and $\Re(s)$ sufficiently large, we have the absolutely convergent integral
\[
\begin{aligned}
\oZ_{\rm JS}(s, W, \phi, \varphi_{2n}^{-1})
&= \int_{Z_n\times \overline{U}_n} \int_{N_n\bsl P_n \times \bar{\frak v}_n} 
   W(u_X \sigma_{2n} (pz\bar u)^\dag) \eta^{-1}(p) |p|_\bk^{s-1} \od\!p \od\!X \\
&\qquad\qquad \cdot \phi(e_n z\bar u) \eta^{-1}(z) |z|_\bk^s \od\!z \od\!\bar u \\
&= \int_{Z_n \times \overline{U}_n} \int_{N_n\bsl P_n \times \bar{\frak v}_n} 
   W(u_X \sigma_{2n} (p\bar u)^\dag) \eta^{-1}(p) |p|_\bk^{s-1} \od\!p \od\!X \\
&\qquad\qquad \cdot \phi(e_n z\bar u) \omega_\pi(z^\dag) \eta^{-1}(z) |z|_\bk^s \od\!z \od\!\bar u,
\end{aligned}
\]
where $u_X$ is as in \eqref{uX} and $\omega_\pi$ is the central character of $\pi$.

For $\varphi_Z \in C^\infty_c(Z_n)$ and $\varphi_{\overline{U}}\in C^\infty_c(\overline{U}_n)$, there is a unique 
$\phi = \phi_{\varphi_Z, \varphi_{\overline{U}}} \in C^\infty_c(\bk^n)$ such that 
\[
\phi(e_n z\bar u) = \varphi_Z(z) \varphi_{\overline{U}}(\bar u)
\]
for all $(z, \bar u)\in Z_n\times \overline{U}_n$. By abuse of notation, 
view $\varphi_{\overline{U}}$ as a function on $\overline{U}_n^\dag$. Then for the above $\phi$ and $\Re(s)$ sufficiently large, we have
\[
\begin{aligned}
\oZ_{\rm JS}(s, W, \phi, \varphi_{2n}^{-1})
&= \int_{N_n\bsl P_n \times \bar{\frak v}_n} 
   \bigl(R(\varphi_{\overline{U}})W\bigr)(u_X\sigma_{2n} p^\dag)\eta^{-1}(p) |p|_\bk^{s-1} \od\!p \od\!X \\
&\qquad\qquad \cdot \int_{Z_n}\varphi_Z(z) \omega_\pi(z^\dag) \eta^{-1}(z) |\det z|_\bk^s \od\!z,
\end{aligned}
\]
where $R(\varphi_{\overline{U}})$ denotes the right regular action. The Tate integral
\[
\zeta(s, \varphi_Z):=\int_{Z_n}\varphi_Z(z) \omega_\pi(z^\dag) \eta^{-1}(z) |z|_\bk^s \od\!z
\]
converges absolutely for all $s\in\BC$, and we can choose $\varphi_Z$ such that $\zeta(s_0, \varphi_Z)\neq 0$.

By \cite{GK}, \cite[Proposition 5]{J10} and \cite{K15},  for any $f \in C^\infty_c(N_{2n}\bsl P_{2n}, \psi_{2n})$, there exists 
$W_0\in \CW(\pi,\psi_{2n})$ whose restriction to $P_{2n}$ coincides with $f$. By the Dixmier–Malliavin lemma, there exist finitely many $W_i \in \CW(\pi, \psi_{2n})$ and $\varphi_{\overline{U},i}\in C^\infty_c(\overline{U}_n)$, indexed by $i\in I$, such that $W_0 = \sum_{i\in I}R(\varphi_{\overline{U},i})W_i$. 
Put $\phi_i := \phi_{\varphi_Z, \varphi_{\overline{U},i}}$ for $i\in I$. Then for $\Re(s)$ sufficiently large we have
\[
\begin{aligned}
\sum_{i\in I} \oZ_{\rm JS}(s, W_i, \phi_i, \varphi_{2n}^{-1})
&= \sum_{i\in I} 
   \int_{N_n\bsl P_n\times \bar{\frak v}_n}
   \bigl(R(\varphi_{\overline{U},i})W_i\bigr)(u_X\sigma_{2n} p^\dag)\eta^{-1}(p) |p|_\bk^{s-1} \od\!p \od\!X 
   \cdot \zeta(s, \varphi_Z) \\
&= \int_{N_n\bsl P_n\times \bar{\frak v}_n} 
   W_0(u_X\sigma_{2n} p^\dag)\eta^{-1}(p) |p|_\bk^{s-1} \od\!p \od\!X 
   \cdot \zeta(s, \varphi_Z) \\
&= \int_{N_n\bsl P_n\times \bar{\frak v}_n} 
   f(u_X\sigma_{2n} p^\dag)\eta^{-1}(p) |p|_\bk^{s-1} \od\!p \od\!X 
   \cdot \zeta(s, \varphi_Z),
\end{aligned}
\]
noting that $u_X\sigma_{2n} p^\dag \in P_{2n}$. The above integrals converge absolutely for all $s\in\BC$, uniformly on compacta, hence define a holomorphic function on $\BC$. 

We can choose $f$ such that
\[
\int_{N_n\bsl P_n\times \bar{\frak v}_n} 
f(u_X\sigma_{2n} p^\dag)\eta^{-1}(p) |p|_\bk^{s_0-1} \od\!p \od\!X \neq 0.
\]
Since we have chosen $\varphi_Z$ with $\zeta(s_0,\varphi_Z)\neq 0$, the holomorphic continuation of 
$\sum_{i\in I} \oZ_{\rm JS}(s, W_i, \phi_i, \varphi_{2n}^{-1})$ does not vanish at $s_0$. 
\end{proof}

\section{Proof of Theorem \ref{thm:FE'_m} (1)} \label{sec:FE'}

In this section we prove Theorem \ref{thm:FE'_m} (1). 

\subsection{Absolute convergence} \label{sec:CC}

We first establish the following convergence result.

\begin{leml}\label{lem:CC}
For $(s, \xi) \in \widehat\Omega_\eta^m$ given in \eqref{minmax'} and $f \in I(\xi)$, the integral $\Lambda_{\mathrm{JS}}(s,f,\phi,\varphi_m^{-1})$ in \eqref{openint} converges absolutely.
\end{leml}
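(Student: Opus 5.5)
We reduce to the case where $f$ is positive and given by an explicit section. Since $|f(g)|$ is bounded by $C\cdot f_{\Re\xi}^0(g)$, where $f^0_{\Re\xi}$ is the $K_m$-spherical vector of $I(|\xi|)$, and $|(R_{\varphi_m^{-1}}(h)\phi)(v)|$ is dominated by $R_{|\eta|^{-1}}(h)\Phi$ for a positive Schwartz function $\Phi$, it suffices to treat $f=f^0$ spherical and $\phi\geq 0$. We may also twist by $|\cdot|^{\Re(\eta)}$ on the $G_n$-part and shift $s$, which reduces to $\eta=1$, $\xi_i$ real, and $s$ real. Absolute convergence is then a statement about a positive integral, so Tonelli applies freely.

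\textbf{Step 1: parametrize $S_m$.} We write $h=\begin{bmatrix}1&X\\&1\end{bmatrix}g^\dag$ (times the extra $x,y$ unipotents when $m$ is odd). In the odd case the $y$-integral is handled by the character $\psi(-vy)$ evaluated at $v=0$, and the $x$-integral is absorbed against $\phi(x)$. This reduces the problem to an integral over $G_n\times M_n$ (plus a Schwartz variable) of
\[
f^0\!\left(z_m\begin{bmatrix}1&X\\&1\end{bmatrix}g^\dag\right)\phi(v_ng)\,|g|_\bk^{s}.
\]
We then compute $f^0(z_m u_X g^\dag)$ via the Iwasawa decomposition relative to $\overline B_m$. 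Conjugating by $z_m$ turns the block upper unipotent into a unipotent element. Writing $g=k\,b$ with $b\in\overline B_n$ (or $g=\bar n a k$), and using that $z_m g^\dag z_m^{-1}=\diag(g,w_ngw_n)$, we see that one block becomes lower triangular and the other, $w_n\,{}^t\!\cdots$, upper. The function $f^0$ is therefore a product of $\xi\delta^{1/2}$ evaluated on the "$\overline B$-part" of a $2\times 2$ block matrix. This is exactly the structure of open-orbit integrals in \cite{LLSS23}.

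\textbf{Step 2: the main obstacle, bounding the $X$-integral.} The key identity is $f^0(g)=\prod_{i}\|\text{(bottom } i\text{ rows minors)}\|^{-(\ldots)}$, that is, the spherical vector is a product of norms of exterior powers of the bottom $i$ rows, with exponents $\xi_{i}-\xi_{i+1}$ shifted by $\delta$. The ordering $\Re\xi_1<\dots<\Re\xi_m$ makes all these exponents negative, i.e. decaying. We then integrate $X$ out inductively, one row/column of $X$ at a time. Each step is a Tate/beta-type integral of the form $\int_{\bk^k}\|(a,x)\|^{-c}\,dx\sim|a|^{k-c}$, which converges precisely when $c>k$; this produces the inequalities $\Re\xi_i<\Re\xi_{i+1}$. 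I would do this by induction on $n$, peeling off the last row and column, mirroring the Rankin--Selberg open-orbit convergence in \cite[Section 3]{LLSS23}. I expect this to be the hardest part, since the bookkeeping of which minors appear after conjugation by $z_m$ is delicate.

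\textbf{Step 3: the torus integral.} After the $X$-integral, what remains is an integral over $A_n$ (times $K_n$, where everything is bounded) of a product $\prod_i \min(1,|a_i/a_{i+1}|)^{\ldots}$-type terms, against $\phi(v_n a k)|a|^{s}$ and the powers $|a_i|^{s+2\xi_i-\ldots}$. The Schwartz function supplies decay as $|a_i|\to\infty$. Convergence as $a_i\to 0$ requires $\Re s+2\Re\xi_1>0$ (from the smallest exponent), and the growth on the other side is controlled by the factor from $X$. This is what produces the upper bound $\Re s<1-2\Re\xi_{m-1}$ rather than $1-2\Re\xi_m$, because the pair $(\xi_m,\xi_{m-1})$ enters only through the unipotent integral, or through the extra Schwartz variable $x$ in the odd case. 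I would verify the exact exponents by a Tate-integral computation of the form $\int_{\bk^\times}\min(1,|t|)^{A}\max(1,|t|)^{-B}|t|^{c}\,d^\times t<\infty$, which holds iff $-A<c<B$.
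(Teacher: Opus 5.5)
\textbf{There is a genuine gap, and it sits between your Steps 2 and 3.}

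In the open orbit integral, $h$ runs over all of $S_m$. So $g$ runs over all of $G_n$, not over $N_n\backslash G_n$ as in the Jacquet--Shalika integral. After the $X$-integral you must therefore still integrate over $\overline N_n\times A_n\times K_n$ with $g=\bar n a k$. The variable $\bar n$ enters in two places:
\begin{itemize}
\item the Schwartz factor, as $\phi(v_n\bar n a k)$ rather than $\phi(v_n a k)$;
\item the second block, where $w_n\bar n a k=(w_n\bar n w_n)(w_naw_n)w_nk$ contains an upper unipotent element that cannot be pulled out through $\overline B_n$.
\end{itemize}
Your Step 3 writes $\phi(v_nak)$ and integrates only over $A_n\times K_n$, so this whole $\overline N_n$-integral is lost.

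As a result, the mechanism you describe does not occur. The $X$-integral is the Gindikin--Karpelevich intertwining integral over the unipotent radical of the $(n,n)$ parabolic. It only needs $\Re(\xi_i)<\Re(\xi_j)$ for $i\le n<j$, not the full chain $\Re\xi_i<\Re\xi_{i+1}$. Its output is a function $F\in I(\xi^1)|\cdot|_\bk^{-n/2}\widehat\otimes I(\xi^2)|\cdot|_\bk^{n/2}$, left equivariant under $\overline B_n\times\overline B_n$. Hence at $g=ak$ it is just a character of $a$ times $F(k,w_nk)$. No $\min(1,|a_i/a_{i+1}|)$ factors appear, and your torus integral only produces lower bounds on $\Re(s)$.

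The within-block orderings and the upper bound on $\Re(s)$ come from the omitted $\overline N_n$-integral. For instance, after the substitution $\bar n\mapsto a\bar n a^{-1}$:
\begin{itemize}
\item the integrand becomes $\chi(a)\,F(k,w_n\bar nk)\,\phi((a_1,\dots,a_n)\bar n k)$;
\item the $a$-integral is then a Tate integral of $\phi(\,\cdot\,\bar nk)$, whose size depends on $\bar n$;
\item the $\overline N_n$-integral converges only if the decay of $F(k,w_n\bar nk)$ beats this $\bar n$-dependence, and that requirement is what bounds $\Re(s)$ from above.
\end{itemize}
Your explanation of the bound $1-2\Re(\xi_{m-1})$ is therefore a guess, not a derivation.

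\textbf{What is missing, and what the paper does.} After the unipotent integral, the remainder is
$\int_{G_n}F(g,w_ng)\,\phi(v_ng)\,\eta^{-1}(g)|g|_\bk^{s}\,dg$.
This is a Rankin--Selberg open-orbit integral for the diagonal $G_n$-action on $\CB_n\times\CB_n\times\bk^n$, with base point $(\overline B_n,\overline B_nw_n,v_n)$. In the odd case it is the action on $\CB_n\times\CB_{n+1}$, with base point $\left(\overline B_n,\overline B_{n+1}\begin{bmatrix} w_n & {}^tv_n\\ 0&1\end{bmatrix}\right)$. In the odd case the $y$-integral does not disappear: the character is trivial at $v=0$, so $y$ joins $X$ in the unipotent integral. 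The paper applies \cite[Proposition~1.4]{LLSS23} to exactly this integral. To make your route work you would have to cite that result or reprove its estimate, including the coupled $\overline N_n$-integral.

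A minor slip: for induction from the lower triangular $\overline B_m$, the spherical vector is governed by norms of exterior powers of the top $i$ rows, not the bottom ones.
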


\begin{proof}  
If $m=2n$, recall the open orbit integral from \eqref{openint}, which after a change of variable $X\mapsto gXg^{-1}$ can be written explicitly as 
\be \label{openinteven}
\Lambda_{\rm JS}(s, f, \phi, \varphi_{2n}^{-1})
= \int_{G_n}\int_{M_n} 
f\!\left( \begin{bmatrix} g &  gX  \\ 0& w_ng \end{bmatrix}\right)
\phi(v_n g) \, \psi(-{\rm tr}\, X) \, \od\! X \, \eta^{-1}(g) |g|_\bk^{s}\,\od\! g.
\ee

To prove absolute convergence of \eqref{openinteven} for $(s,\xi)\in \widehat\Omega^{2n}_\eta$, we assume without loss of generality that 
$\xi$ takes positive real values (as a character of ($\bk^\times)^m$) and that both 
$f$ and $\phi$ are real-valued and nonnegative. Using the Gindikin–Karpelevich formula (see \cite{GGPS} and \cite{L71}) together with Lemma~\ref{lem:Lebesgue}, we find that 
\begin{equation}\label{inti}
\int_{M_n} f\!\left(\begin{bmatrix} g_1 & g_1 X \\0 & g_2\end{bmatrix}\right) \od\!X, \qquad g_1, g_2\in G_n,
\end{equation}
converges absolutely and defines a smooth function of $g_1,g_2\in G_n$. Moreover, it belongs to
\[
I(\xi^1)\abs{\cdot}_{\bk}^{-n/2}\,\widehat\otimes\, I(\xi^2)\abs{\cdot}_{\bk}^{n/2},
\]
where $\xi^1:=(\xi_1,\dots,\xi_n)$ and $\xi^2:=(\xi_{n+1},\dots,\xi_{2n})$. Recall $v_n=(1,1,\dots,1)\in \bk^n$ from \eqref{vn}. It is readily checked that $(\overline{B}_n, \overline{B}_n w_n, v_n)$ is a base point for the unique open orbit of the diagonal right action of $G_n$ on $\CB_n\times \CB_n\times \bk^n$. Consequently, \cite[Proposition~1.4]{LLSS23} implies the absolute convergence of \eqref{openinteven} on the desired domain.

The odd case $m=2n+1$ is handled similarly, invoking \cite[Proposition~1.4]{LLSS23} and the fact that
\[
\left(\overline{B}_{n},\, \overline{B}_{n+1} \begin{bmatrix} w_n & {}^t v_n \\0 & 1\end{bmatrix} \right)
\]
is a base point of the unique open $G_n$-orbit in $\CB_n\times \CB_{n+1}$. We omit the details.
\end{proof}

By Lemma~\ref{lem:CC}, we have proved the convergence assertion of Theorem~\ref{thm:FE'_m} (1). In the next two subsections, we prove the   functional equation \eqref{eq:FE'} in Theorem~\ref{thm:FE'_m} (1). 

\subsection{Proof of the functional equation \eqref{eq:FE'}: the even case} \label{sec5.2}

Assume in this subsection that $m=2n$. In this case, \eqref{eq:FE'} reduces to
\[
\Lambda_{\rm JS}(1-s, \sigma'_{2n}\cdot  \tilde f, \CF_\psi(\phi), \varphi_{2n})
= \left(\prod^n_{i=1} \gamma(s, \xi_i \xi_{2n+1-i} \eta^{-1}, \psi)\right) \cdot  \Lambda_{\rm JS}(s,f,\phi,\varphi_{2n}^{-1}),
\]
where $s\in \Omega_{\xi,\eta}:= \set{ s \in \BC | (s, \xi) \in \Omega_\eta^m}$. Using the definition and the identity ${}^tz_{2n}^{-1}=z_{2n}$, we obtain
\[
\Lambda_{\rm JS}(1-s, \sigma'_{2n}\cdot \tilde f, \CF_\psi(\phi), \varphi_{2n})
= \int_{S_{2n}} f(w_{2n}z_{2n} {}^th^{-1} \sigma'_{2n})
\left(R_{\varphi_{2n}}(h)\CF_\psi(\phi)\right)(v_n) | h|_\bk^{\frac{1-s}{2}} \od\! h.
\]
A direct calculation gives $w_{2n}z_{2n} \sigma'_{2n} = z_{2n}$. Thus, applying the change of variables $h\mapsto \widehat h$ (see \eqref{inv}) and invoking Proposition~\ref{prop:Reven}, we obtain
\be \label{Lambda1}
\Lambda_{\rm JS}(1-s, \sigma'_{2n}\cdot  \tilde f, \CF_\psi(\phi), \varphi_{2n})
= \int_{S_{2n}} f(z_{2n} h) \CF_\psi(R_{\varphi_{2n}^{-1}}(h)\phi)(v_n) |h|_{\bk}^{\frac{s}{2}}\od\! h. 
\ee

Write  \eqref{Lambda1} as the iterated integral 
$\int_{ A_n^\dag\bsl S_{2n}} \int_{A_n^\dag}$. For $a = \diag(a_1, a_2,\dots, a_n)\in A_n$ and 
$a^\dag =\begin{bmatrix} a \\ & a \end{bmatrix}\in S_{2n}$,  using Proposition~\ref{prop:Reven} again we verify that
\[
\begin{aligned}
& f(z_{2n} a^\dag h) \CF_\psi(R_{\varphi_{2n}^{-1}}(a^\dag h)\phi)(v_n) | a^\dag h|_{\bk}^{\frac{s}{2}} \\
= \,&  f(z_{2n}  h) | h|_\bk^{\frac{s}{2}}\cdot\left(\prod^n_{i=1} (\xi_i \xi_{2n+1-i}\eta^{-1})(a_i) |a_i|_\bk^{s-1} \right)\cdot   \CF_\psi(R_{\varphi_{2n}^{-1}}(h)\phi)(a_1^{-1}, \dots, a_n^{-1}) .
\end{aligned}
\]
By the change of variables $a\mapsto a^{-1}$ and Tate's thesis, we get
\[
\begin{aligned}
& \int_{A_n^\dag}  \left(\prod^n_{i=1} (\xi_i \xi_{2n+1-i}\eta^{-1})(a_i) |a_i|_\bk^{s-1} \right)\cdot   \CF_\psi(R_{\varphi_{2n}^{-1}}(h)\phi)(a_1^{-1}, \dots, a_n^{-1}) \od\! a^\dag \\
= & \prod^n_{i=1} \gamma(s, \xi_i \xi_{2n+1-i}\eta^{-1}, \psi) \cdot\int_{A_n^\dag}\left( \prod^n_{i=1}(\xi_i \xi_{2n+1-i}\eta^{-1})(a_i) |a_i|_\bk^s\right) \cdot \left(R_{\varphi_{2n}^{-1}}(h)\phi\right)(a_1, \dots, a_n) \od\! a^\dag,
\end{aligned}
\]
where both integrals converge absolutely. In view of the preceding equation and the identity
\[
\begin{aligned}
& f(z_{2n} a^\dag h) \left(R_{\varphi_{2n}^{-1}}(a^\dag h)\phi\right)(v_n) | a^\dag h |_{\bk}^{\frac{s}{2}} \\
= \,&  f(z_{2n}  h) | h|_\bk^{\frac{s}{2}}\cdot \left(\prod^n_{i=1} (\xi_i \xi_{2n+1-i}\eta^{-1})(a_i) |a_i|_\bk^{s}\right) \cdot   \left(R_{\varphi_{2n}^{-1}}(h)\phi\right)(a_1, \dots, a_n),
\end{aligned}
\]
we find that \eqref{Lambda1} equals
\[
\begin{aligned}
&  \left(\prod^n_{i=1} \gamma(s, \xi_i \xi_{2n+1-i}\eta^{-1}, \psi)\right)  \cdot \int_{S_{2n}}f(z_{2n} h) \left(R_{\varphi_{2n}^{-1}}(h)\phi\right)(v_n) | h|_\bk^{\frac{s}{2}}\od\! h \\
= & \left( \prod^n_{i=1} \gamma(s, \xi_i \xi_{2n+1-i}\eta^{-1}, \psi)\right) \cdot \Lambda_{\rm JS}(s, f, \phi, \varphi_{2n}^{-1}).
\end{aligned}
\]
This proves \eqref{eq:FE'} in the even case.

\subsection{Proof of the functional equation \eqref{eq:FE'}: the odd case}

Assume in this subsection that $m=2n+1$. Then \eqref{eq:FE'} takes the form
\[
\Lambda_{\rm JS}(1-s, \sigma'_{2n+1}\cdot  \tilde f,  \CF_{\overline\psi}(\phi), \varphi_{2n+1})
=\eta(-1)^n \cdot \left(\prod^n_{i=1} \gamma(s, \xi_i \xi_{2n+2-i} \eta^{-1}, \psi)\right) \cdot  \Lambda_{\rm JS}(s,f,\phi,\varphi_{2n+1}^{-1}),
\]
where $s\in \Omega_{\xi,\eta}:= \set{ s \in \BC \mid (s, \xi) \in \Omega_\eta^m}$.

We begin with the integral expression
\be \label{Lambda2}
\begin{aligned}
& \Lambda_{\rm JS}(1-s, \sigma'_{2n+1}\cdot  \tilde f,  \CF_{\overline\psi}(\phi), \varphi_{2n+1}) \\
  =\, & \int_{S_{2n+1}} f(w_{2n+1} {}^t z_{2n+1}^{-1} {}^t h^{-1} \sigma'_{2n+1})\left(R_{\varphi_{2n+1}}(h) \CF_{\overline\psi}(\phi)\right)(0)| h|_\bk^{\frac{1-s}{2}}\od\!h \\
= \, & \int_{S_{2n+1}} f(z_{2n+1}' \widehat h)\left(R_{\varphi_{2n+1}}(h) \CF_{\overline\psi}(\phi)\right)(0)| h|_\bk^{\frac{1-s}{2}}\od\!h,
\end{aligned}
\ee
where
\be \label{z'}
z_{2n+1}' := w_{2n+1} {}^t\!z_{2n+1}^{-1}\sigma'_{2n+1} = \begin{bmatrix} -v_n & 0& 1 \\ 1_n &0 & 0\\ 0& w_n & 0\end{bmatrix}. 
\ee

Unlike the even case, the computation in the odd case is considerably more involved. We first record a useful factorization of the element $z_{2n+1}'$.

\begin{leml} \label{lem:z'} 
The element $z_{2n+1}'$ defined in \eqref{z'} belongs to $\overline{N}_{2n+1}z_{2n+1} S_{2n+1}$, where $\overline{N}_{2n+1}$ is the unipotent radical of $\overline{B}_{2n+1}$. More precisely, there exists $u_0\in \overline{N}_{2n+1}$ such that 
$z_{2n+1}' = u_0 z_{2n+1} h_0^{-1}$, with
\[
h_0:=\begin{bmatrix} g_0 & {}^t e_n e_n  & {}^t e_n \\ 0& g_0 & 0\\ 0& e_n & 1\end{bmatrix},\qquad
g_0:= \left[\begin{smallmatrix} -2 & 1 \\ 1 & -2 & 1  \\ & \ddots & \ddots & \ddots  \\  & & 1 & -2 & 1 \\
 & & & 1 & -1 \end{smallmatrix}\right]_{n\times n}.
\]
\end{leml}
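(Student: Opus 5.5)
The claim is equivalent to showing that
\[
u_0:=z_{2n+1}'\,h_0\,z_{2n+1}^{-1}
\]
is lower triangular unipotent. Since $u_0$ is forced by this formula, the proof is a direct matrix computation. We first verify that $h_0\in S_{2n+1}$. Taking $g=g_0$, $x g_0=e_n$ and $y={}^t e_n$, the $(1,2)$ block should be $Xg_0={}^t e_n e_n$. Since $g_0$ is invertible (it is a tridiagonal matrix of nonzero determinant, up to sign $\det g_0=\pm1$), this determines $X={}^te_ne_ng_0^{-1}$ and $x=e_ng_0^{-1}$. So $h_0$ has the Shalika shape.

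To compute $u_0$, we use block multiplication with respect to the decomposition $2n+1=n+n+1$.

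\textbf{Step 1.} Compute $z_{2n+1}^{-1}$. From \eqref{zm},
\[
z_{2n+1}^{-1}=\begin{bmatrix}1_n&0&0\\0&w_n&-w_n{}^tv_n\\0&0&1\end{bmatrix},
\]
and $w_n{}^tv_n={}^tv_n$.

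\textbf{Step 2.} Compute $z'_{2n+1}h_0$ with $z'_{2n+1}$ as in \eqref{z'}. The three block rows are as follows:
\begin{itemize}
\item the first block row is $(-v_ng_0,\ -v_n{}^te_ne_n+e_n,\ -v_n{}^te_n+1)=(-v_ng_0,\ 0,\ 0)$;
\item the second block row is $(g_0,\ {}^te_ne_n,\ {}^te_n)$;
\item the third block row is $(0,\ w_ng_0,\ 0)$.
\end{itemize}
Note that $v_ng_0=(-1,0,\dots,0)$, since every column sum of $g_0$ vanishes except the first, which equals $-1$. Hence the first block row is $(e_1,0,0)$ with $e_1=(1,0,\dots,0)$.

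\textbf{Step 3.} Multiply on the right by $z_{2n+1}^{-1}$. The middle block column becomes $(\ast)w_n$ and the last column becomes $-(\ast)w_n{}^tv_n+(\text{last column})$. This gives:
\begin{itemize}
\item first block row: $(e_1,0,0)$;
\item second block row: $(g_0,\ {}^te_ne_nw_n,\ -{}^te_n+{}^te_n)=(g_0,\ {}^te_ne_1,\ 0)$;
\item third block row: $(0,\ w_ng_0w_n,\ -w_ng_0{}^tv_n)$.
\end{itemize}
Since $g_0$ is symmetric, $g_0{}^tv_n={}^t(v_ng_0)=-{}^te_1$, so the last entry is $w_n{}^te_1={}^te_n$.

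The resulting matrix is not yet visibly lower unitriangular. This is because the convention for $\overline B$ together with the ordering of rows (row $1$ and rows $n+1,\dots$) must be read in the full $(2n+1)\times(2n+1)$ indexing. So the key step is to expand these blocks entrywise: $g_0$ is tridiagonal, and $w_ng_0w_n$ is tridiagonal with corner $-2$. Then we check that every entry above the diagonal vanishes and every diagonal entry equals $1$.

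\textbf{Main obstacle.} I expect this final indexing check to be the delicate point. If a sign or placement mismatch appears, I would adjust by recomputing $u_0$ from the requirement $\overline B_{2n+1}z'_{2n+1}=\overline B_{2n+1}z_{2n+1}h_0^{-1}$, that is, by checking that the first $k$ rows of $z'_{2n+1}h_0$ and of $z_{2n+1}$ span the same flags. This suffices up to the torus part. The torus part is then checked on the diagonal entries, which the explicit normalization in $g_0$ (the corner $-1$ and the entries $-2$) is designed to make equal to $1$.
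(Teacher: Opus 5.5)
Your computation is exactly the paper's proof: you arrive at $z'_{2n+1}h_0z_{2n+1}^{-1}=\begin{bmatrix} e_1' & 0 & 0\\ g_0 & {}^te_ne_1' & 0\\ 0 & w_ng_0w_n & {}^te_n\end{bmatrix}$, with rows blocked $1+n+n$ and columns blocked $n+n+1$, and the paper then simply declares this matrix to lie in $\overline{N}_{2n+1}$. The final check you flag as delicate is immediate and needs no fallback: the one-step offset between the row and column blocks puts the superdiagonal $1$'s of $g_0$ and of $w_ng_0w_n$, together with the $1$'s of $e_1'$, ${}^te_ne_1'$ and ${}^te_n$, on the diagonal, and puts the entries $-2,-1$ strictly below it, so the normalization of $g_0$ matters only through $v_ng_0=-e_1'$ and not, as you suggest, through the diagonal entries.
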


\begin{proof}
A direct calculation gives
\[
z'_{2n+1} h_0 z_{2n+1}^{-1}
=\begin{bmatrix} e_1' & & \\
g_0 & {}^t e_n e_1' & \\
0& w_n g_0 w_n & {}^te_n
\end{bmatrix},
\]
where $e_1':=(1,0,\dots,0)\in \bk^n$. This matrix clearly lies in $\overline{N}_{2n+1}$, as required.
\end{proof}

Let $g_0$ and $h_0$ be as in Lemma~\ref{lem:z'}. Note that $\det g_0 = (-1)^n$. Using Lemma~\ref{lem:z'} and Proposition~\ref{prop:Rodd} (2), the change of variables $h\mapsto \widehat h_0 \widehat h$ in \eqref{Lambda2}  yields
\[
\begin{aligned}
& \Lambda_{\rm JS}(1-s, \sigma'_{2n+1}\cdot \tilde f,  \CF_{\overline\psi}(\phi), \varphi_{2n+1}) \\
= &  \int_{S_{2n+1}} f(z_{2n+1} h_0^{-1} \widehat h)\left(R_{\varphi_{2n+1}}(h) \CF_{\overline\psi}(\phi)\right)(0)| h|_\bk^{\frac{1-s}{2}}\od\!h \\
 = &  \int_{S_{2n+1}} f(z_{2n+1} h ) \left(R_{\varphi_{2n+1}}(\widehat h_0) \CF_{\overline\psi}(R_{\varphi_{2n+1}^{-1}}(h)\phi)\right)(0)| h|_\bk^{\frac{s}{2}}\od\!h. 
\end{aligned}
\]

We now compute the action of $R_{\varphi_{2n+1}}(\widehat h_0)$. It is easy to verify that
\[
h_0 = \begin{bmatrix} 1_n & & {}^t e_n \\ & 1_n  \\ & & 1 \end{bmatrix} \begin{bmatrix} g_0  \\ & g_0 \\ & & 1\end{bmatrix} \begin{bmatrix} 1_n \\ & 1_n \\ & e_n & 1\end{bmatrix},
\]
so that
\[
\widehat h_0 =  \begin{bmatrix} 1_n   \\ & 1_n  \\ &  -e_n & 1 \end{bmatrix} \begin{bmatrix} {}^tg_0^{-1}  \\ & {}^tg_0^{-1} \\ & & 1\end{bmatrix} \begin{bmatrix} 1_n & & -{}^te_n\\ & 1_n \\ &  & 1\end{bmatrix}.
\]
Using Proposition~\ref{prop:Rodd} (1), we obtain that 
\[
\left(R_{\varphi_{2n+1}}(\widehat h_0)\phi\right) (0) = \eta(-1)^n  \psi( - e_n {}^tg_0^{-1} \, {}^te_n) \phi_1(- {}^te_n {}^t g_0^{-1})  = \eta(-1)^n \psi(n) \phi(v_n'),
\]
where  $\phi \in \CS(\bk^n)$ and $v_n':=(1,2,\dots,n)\in \bk^n$. Hence
\be \label{Lambda3}
\begin{aligned}
& \Lambda_{\rm JS}(1-s, \sigma'_{2n+1}\cdot  \tilde f,  \CF_{\overline\psi}(\phi), \varphi_{2n+1}) \\
 =\ &  \eta(-1)^n \psi(n)  \int_{S_{2n+1}} f(z_{2n+1} h ) \CF_{\overline\psi}(R_{\varphi_{2n+1}^{-1}}(h)\phi)(v_n')| h|_\bk^{\frac{s}{2}}\od\!h.
\end{aligned}
\ee

In the notation of \eqref{dag}, the diagonal torus $A_n$ of $G_n$ induces a diagonal torus $A_n^\ddag$ in $S_{2n+1}$. Define
\[
a':= u^{-1} a^\ddag u \quad\text{for }a\in A_n,
\]
where
\[
u: = \begin{bmatrix} u_n & \\ 0& u_n  \\0 & e_n & 1\end{bmatrix},\qquad
u_n := \left[\begin{smallmatrix}
1  \\ -1 & 1 \\ 
& \ddots & \ddots \\
& & -1 & 1
\end{smallmatrix}\right]_{n\times n}.
\]
Set $A_n' :=u^{-1} A_n^\ddag u=\{a'\mid a\in A_n\}$.

The following technical lemma will be used; its proof is a direct verification.

\begin{leml} \label{An'} 
For $a =\diag(a_1, a_2,\dots, a_n)\in A_n$, the element $z_{2n+1} a' z_{2n+1}^{-1}$ belongs to $\overline{B}_{2n+1}$ and has diagonal entries 
$a_1, a_2,\dots, a_n, 1, a_n,  \dots, a_2, a_1$.
\end{leml}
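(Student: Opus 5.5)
The lemma is a direct matrix computation. I will organize it so that lower triangularity, and the diagonal, can be read off from a few elementary identities involving $u_n$, $v_n$ and $w_n$.

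\emph{Step 1: compute $a'$.} Since $u$ is block lower triangular,
\[
u^{-1}=\begin{bmatrix} u_n^{-1} & & \\ 0 & u_n^{-1} & \\ 0 & -e_n u_n^{-1} & 1\end{bmatrix}.
\]
Put $b:=u_n^{-1}au_n$. It is lower triangular with diagonal $(a_1,\dots,a_n)$, because $u_n$ is lower unitriangular. Multiplying out gives
\[
a'=\begin{bmatrix} b & & \\ 0 & b & \\ 0 & x & 1\end{bmatrix},\qquad x:=e_nu_n-e_nb .
\]
In particular $a'\in S_{2n+1}$.

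\emph{Step 2: the auxiliary identities.} I will use:
\begin{itemize}
\item $u_n\,{}^t v_n={}^t e_1'$ and $u_n^{-1}\,{}^te_1'={}^tv_n$. These give $b\,{}^tv_n=a_1\,{}^tv_n$, since ${}^tv_n$ is an eigenvector of $b$.
\item $w_n\,{}^tv_n={}^tv_n$.
\item $e_nu_n=(0,\dots,0,-1,1)$.
\end{itemize}

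\emph{Step 3: compute $z_{2n+1}a'z_{2n+1}^{-1}$ blockwise.} Here
\[
z_{2n+1}^{-1}=\begin{bmatrix}1_n&&\\&w_n&-{}^tv_n\\&&1\end{bmatrix}.
\]
Denote by $\ast$ entries that need not be computed. The product has block form
\[
\begin{bmatrix} b & 0 & 0\\ 0 & (w_nb+{}^tv_nx)w_n & {}^tv_n-(w_nb+{}^tv_nx)\,{}^tv_n\\ 0 & xw_n & 1-x\,{}^tv_n\end{bmatrix}.
\]
Lower triangularity and the diagonal are then read off in three pieces.
\begin{itemize}
\item The upper-left block is $b$. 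It is lower triangular with diagonal $a_1,\dots,a_n$.
\item The lower-right $(n+1)\times(n+1)$ block must be checked to be lower triangular with diagonal $1,a_n,\dots,a_1$. The identities of Step 2 reduce the last column to explicit vectors proportional to ${}^tv_n$. The middle block $w_nbw_n$ is upper triangular with diagonal $(a_n,\dots,a_1)$. The rank-one term ${}^tv_nxw_n$ must combine with it, with the last row $xw_n$, and with the last column, so that the whole $(n+1)\times(n+1)$ block is lower triangular with the stated diagonal. Checking this means using the explicit shape of $x$ from Step 1 and the fact that $w_n$ reverses the indices.
\item The zero blocks above the diagonal are automatic.
\end{itemize}

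\emph{Main obstacle.} The hard step is the lower-right block of Step 3. The upper-triangular piece $w_nbw_n$ and the rank-one correction ${}^tv_nxw_n$ have to cancel exactly above the diagonal. The ordering $1,a_n,\dots,a_1$ indicates that the entry $1$ moves to position $n+1$, so the expected triangular shape only appears after this cancellation. I would first check the cases $n=1$ and $n=2$ by hand to pin down signs and ordering, and then do the general verification column by column, using $b\,{}^tv_n=a_1{}^tv_n$ and the tridiagonal shape of $u_n$. If a sign in $x$ is off, I will recheck the bottom row of $u^{-1}$.
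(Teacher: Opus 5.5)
Your overall approach matches the paper's: multiply out $z_{2n+1}u^{-1}a^{\ddag}uz_{2n+1}^{-1}$ blockwise and read off its shape. Your block form in Step~3 is also correct. There is nonetheless a genuine gap. The verification that \emph{is} the lemma is deferred, and the input you set up for it is wrong.

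In Step~1, the bottom row of $u$ is $(0,e_n,1)$, so the bottom row of $a^{\ddag}u$ is $(0,e_n,1)$, not $(0,e_nu_n,1)$. Multiplying by the bottom row $(0,-e_nu_n^{-1},1)=(0,-v_n,1)$ of $u^{-1}$ gives $x=e_n-e_nb=e_n-v_nau_n$, not $e_nu_n-e_nb$. The two formulas agree only when $n=1$, so your planned check at $n=1$ would not reveal the problem.

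With your $x$ and $n\ge 2$, one gets $x\,{}^tv_n=e_n\,{}^te_1'-a_1=-a_1$. The $(2,3)$ block (rows $n+1,\dots,2n$ of the last column) is ${}^tv_n-(w_nb+{}^tv_nx)\,{}^tv_n$, which then equals ${}^tv_n\neq 0$, and the corner entry becomes $1+a_1$. So the matrix would not lie in $\overline{B}_{2n+1}$, and the step you say ``must be checked'' would fail. Note also that ``proportional to ${}^tv_n$'' is not enough for that block: lower triangularity needs it to vanish exactly.

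With $x=e_n-e_nb$ the check is short, and it is what the paper's ``straightforward to check'' amounts to.
\begin{enumerate}
\item From $b\,{}^tv_n=a_1{}^tv_n$ you get $x\,{}^tv_n=1-a_1$. Hence the $(2,3)$ block is ${}^tv_n-a_1{}^tv_n-(1-a_1){}^tv_n=0$, and the corner entry is $a_1$.
\item Using $e_nw_n=e_1'$, the middle block is $(w_n-{}^tv_ne_n)bw_n+{}^tv_ne_1'$. This is the paper's $(w_nu_n^{-1}-{}^tv_nv_n)au_nw_n+{}^tv_ne_1'$.
\item The entries of $b$ are $b_{kj}=a_j-a_{j+1}$ for $j<k$, $b_{kk}=a_k$, and $0$ above the diagonal.
\item The first row of $w_n-{}^tv_ne_n$ is zero, and its $i$-th row ($i\ge 2$) is $e_{n+1-i}-e_n$. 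So the $i$-th row of $(w_n-{}^tv_ne_n)b$ is row $n+1-i$ of $b$ minus row $n$ of $b$. This difference vanishes in columns $<n+1-i$ and equals $a_{n+2-i}$ in column $n+1-i$.
\item After the column reversal by $w_n$, the matrix is lower triangular with diagonal $(0,a_n,\dots,a_2)$. Adding ${}^tv_ne_1'$ only puts $1$'s in the first column.
\end{enumerate}
Hence the middle block is lower triangular with diagonal $(1,a_n,\dots,a_2)$. The rank-one term does not merely cancel the part of $w_nbw_n$ above the diagonal; it also changes the diagonal from $(a_n,\dots,a_1)$ to $(1,a_n,\dots,a_2)$, with $a_1$ reappearing in the corner. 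Together with $b$ in the upper-left block, this gives the stated diagonal $a_1,\dots,a_n,1,a_n,\dots,a_2,a_1$.
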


\begin{proof}
A direct computation yields
\[
\begin{aligned}
z_{2n+1}a' z_{2n+1}^{-1} 
&= z_{2n+1} u^{-1} a^\ddag u z_{2n+1}^{-1} \\
&= \begin{bmatrix} 1_n &  &  \\  & w_n & {}^tv_n \\  & 0 & 1\end{bmatrix} \begin{bmatrix} u_n^{-1} & \\ 0& u_n^{-1}  \\0 & -v_n & 1\end{bmatrix} \begin{bmatrix} a \\ & a \\ & & 1\end{bmatrix}
    \begin{bmatrix} u_n & \\ 0& u_n  \\0 & e_n & 1\end{bmatrix} \begin{bmatrix} 1_n &  &  \\  & w_n & -{}^tv_n \\  & 0 & 1\end{bmatrix}\\
&= \begin{bmatrix} u_n^{-1}a u_n &  &  \\
    0 & (w_n u_n^{-1} -{}^t v_n v_n)a u_n w_n + {}^tv_n e_1' &  \\
    0 & -v_n a u_n w_n + e_1' & a_1
\end{bmatrix},
\end{aligned}
\]
where $e_1'=(1,0,\ldots,0)\in \bk^n$. It is straightforward to check that the last matrix lies in $\overline{B}_{2n+1}$ with the stated diagonal entries.
\end{proof}

Let  $a= \diag(a_1,a_2,\ldots,a_n)\in A_n$. Again by Proposition~\ref{prop:Rodd} (2),  we have that 
\be \label{a1}
\CF_{\overline\psi}(R_{\varphi_{2n+1}^{-1}}(a' )\phi)  = | a|_\bk^{-1} R_{\varphi_{2n+1}}(\widehat{u^{-1} a^\ddag}) \CF_{\overline\psi}(R_{\varphi_{2n+1}^{-1}}(u)\phi).
\ee
Using Proposition~\ref{prop:Rodd} (1) and the factorization
\[
\widehat{u^{-1} a^\ddag} =\begin{bmatrix} 1_n & 0& {}^t e_n \\ & 1_n &0\\ & & 1\end{bmatrix}  \begin{bmatrix}  {}^t u_n a^{-1}  \\ & {}^t u_n a^{-1}  \\ & & 1\end{bmatrix},
\]
we obtain that for all $\phi_1\in \CS(\bk^n)$,
\be \label{a2}
\left(R_{\varphi_{2n+1}}(\widehat{u^{-1} a^\ddag})\phi_1\right)(v_n')   = \psi(-v_n' {}^t\!e_n) \eta( a)^{-1} \phi_1 (v_n'  
{}^t\!u_n  a^{-1}) = \psi(-n) \eta( a)^{-1} \phi_1(v_n a^{-1}).
\ee

Now write the integral in \eqref{Lambda3} as the iterated integral $\int_{A_n' \bsl S_{2n+1}} \int_{A_n'}$. Applying Lemma~\ref{An'}, \eqref{a1} and \eqref{a2}, we find that
\[
\begin{aligned}
& f(z_{2n+1}a' h ) \CF_{\overline\psi}(R_{\varphi_{2n+1}^{-1}}(a'h)\phi)(v_n')| a'h|_\bk^{\frac{s}{2}} \\
 = \,& \psi(-n)  f(z_{2n+1}  h) | h|_\bk^{\frac{s}{2}} \cdot\left(\prod^n_{i=1}(\xi_i\xi_{2n+2-i}\eta^{-1})(a_i)|a_i|_\bk^{s-1}\right) \cdot  \CF_{\overline\psi}(R_{\varphi_{2n+1}^{-1}}(uh)\phi)(a_1^{-1}, \dots, a_n^{-1}).
\end{aligned}
\]
After the change of variables $a\mapsto a^{-1}$ and applying Tate's thesis, we get
\[
\begin{aligned}
&  \int_{A_n'}  \left(\prod^n_{i=1} (\xi_i \xi_{2n+2-i}\eta^{-1})(a_i) |a_i|_\bk^{s-1}\right) \cdot  \CF_{\overline\psi}(R_{\varphi_{2n+1}^{-1}}(uh)\phi)(a_1^{-1}, \dots, a_n^{-1}) \od\! a'\\
= &   \prod^n_{i=1} \gamma(s, \xi_i \xi_{2n+2-i}\eta^{-1}, \overline\psi) \cdot \int_{A_n'} \left(\prod^n_{i=1}(\xi_i \xi_{2n+2-i}\eta^{-1})(a_i) |a_i|_\bk^s \right) \cdot \left(R_{\varphi_{2n+1}^{-1}}(uh)\phi\right)(a_1, \dots, a_n) \od\! a' \\
= &   \prod^n_{i=1} \gamma(s, \xi_i \xi_{2n+2-i}\eta^{-1},  \psi) \cdot \int_{A_n'} \left(\prod^n_{i=1}(\xi_i \xi_{2n+2-i}\eta^{-1})(a_i) |a_i|_\bk^s \right) \cdot \left(R_{\varphi_{2n+1}^{-1}}(uh)\phi\right)(-a_1, \dots, -a_n) \od\! a',
\end{aligned}
\]
where in the last step we made the substitution $a\mapsto -a$ and used the identity $\gamma(s,\omega,\overline\psi)=\omega(-1)\gamma(s,\omega,\psi)$ for $\omega\in \widehat{\bk^\times}$.

Observing that
\[
u^{-1}a =  \begin{bmatrix} 1_n \\ 0& 1_n \\ 0&  -e_n & 1\end{bmatrix}\begin{bmatrix} u_n^{-1}a \\ & u_n^{-1}a \\ & & 1\end{bmatrix}
\]
and $v_n u_n = e_n$, we have
\begin{eqnarray*}
\left(R_{\varphi_{2n+1}^{-1}}(a' h)\phi\right)(0)
&=&\eta^{-1}(a) \left(R_{\varphi_{2n+1}^{-1}}(uh)\phi\right) (-e_n u_n^{-1}a) \\
&=&\eta^{-1}( a)\left(R_{\varphi_{2n+1}^{-1}}(uh)\phi\right)(-a_1,  \dots, -a_n). 
\end{eqnarray*}
Consequently,
\[
\begin{aligned}
&\Lambda_{\rm JS}(1-s, \sigma'_{2n+1}\cdot \tilde f,   \CF_{\overline\psi}(\phi), \varphi_{2n+1}) \\ 
&\qquad\qquad=\eta(-1)^n \prod^n_{i=1}\gamma(s, \xi_i \xi_{2n+2-i}\eta^{-1},  \psi)  \\
&\qquad\qquad\qquad\qquad\cdot    \int_{A_n' \bsl S_{2n+1}}\int_{A_n'}
f(z_{2n+1}a' h) \left(R_{\varphi_{2n+1}^{-1}}(a' h)\phi\right)(0) | a'h|_\bk^{\frac{s}{2}}\od\! a' \od\! h  \\
&\qquad\qquad = \eta(-1)^n \cdot\left(  \prod^n_{i=1}\gamma(s, \xi_i \xi_{2n+2-i}\eta^{-1},  \psi)\right)  \cdot \Lambda_{\rm JS}(s, f, \phi, \varphi_{2n+1}^{-1}).
\end{aligned}
\]
This completes the proof of the functional equation \eqref{eq:FE'} in the odd case.

\section{Proof of Theorem \ref{thm:MF_m}} 
\label{sec:Ind}

In this section, we will simultaneously and inductively prove Theorem \ref{thm:MF_m} and the following theorem. 

\begin{thml} 
\label{thm2:FE_m}
For $(s,\xi)\in \Omega^m_\eta$ as in \eqref{minmax'}, $f\in I(\xi)$, and $\phi\in \mathcal{S}(\bk^n)$, the following functional equation holds: 
\begin{equation}\label{femjs}
\oZ_{\mathrm{JS}}(1-s, \sigma'_m\cdot W_{\tilde f}, \widehat{\phi}, \varphi_m)
=
\eta(-1)^{mn}\cdot \left(
\prod_{1\leq i<j \leq m} \gamma(s, \xi_i\xi_j\eta^{-1}, \psi)\right)
\cdot
\oZ_{\mathrm{JS}}(s, W_f, \phi, \varphi_m^{-1}),
\end{equation}
where
\[
\widehat{\phi} :=
\begin{cases}
\mathcal{F}_\psi(\phi), & \text{if $m$ is even},\\[4pt]
\mathcal{F}_{\overline{\psi}}(\phi), & \text{if $m$ is odd}.
\end{cases}
\]

\end{thml}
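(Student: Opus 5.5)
The plan is to deduce Theorem~\ref{thm2:FE_m} for a given $m$ from Theorem~\ref{thm:MF_m} for the same $m$, together with the open-orbit functional equation \eqref{eq:FE'} of Theorem~\ref{thm:FE'_m} (1). Within the simultaneous induction, I will take Theorem~\ref{thm:MF_m} for $m$ as already available; it is proved using the functional equations for smaller ranks. Fix $(s,\xi)\in\Omega^m_\eta$, $f\in I(\xi)$ and $\phi\in\CS(\bk^n)$. Since $\Omega^m_\eta\subset\widehat\Omega^m_\eta$, applying \eqref{mfactor} to $(s,\xi,f,\phi)$ gives
\[
\Lambda_{\rm JS}(s,f,\phi,\varphi_m^{-1})=\Bigl(\prod_{1\le i<j\le m-i}\gamma(s,\xi_i\xi_j\eta^{-1},\psi)\Bigr)\,\oZ_{\rm JS}(s,W_f,\phi,\varphi_m^{-1}).
\]

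Next I apply the analogue of \eqref{mfactor} on the dual side. This uses $\eta^{-1}$ in place of $\eta$, $\overline\psi$ in place of $\psi$, the data $(1-s,\widetilde\xi)$, the section $\sigma'_m\cdot\widetilde f\in I(\widetilde\xi)$ and the function $\widehat\phi$. By Remark~\ref{rmk:Omega}, $(1-s,\widetilde\xi)\in\Omega^m_{\eta^{-1}}\subset\widehat\Omega^m_{\eta^{-1}}$, so this application is legitimate. I also need $W_{\sigma'_m\cdot\widetilde f}=\sigma'_m\cdot W_{\widetilde f}$, which holds because the Jacquet integral is $G_m$-equivariant. This yields
\[
\Lambda_{\rm JS}(1-s,\sigma'_m\cdot\widetilde f,\widehat\phi,\varphi_m)=\Bigl(\prod_{1\le i<j\le m-i}\gamma(1-s,\xi_{m+1-i}^{-1}\xi_{m+1-j}^{-1}\eta,\overline\psi)\Bigr)\,\oZ_{\rm JS}(1-s,\sigma'_m\cdot W_{\widetilde f},\widehat\phi,\varphi_m).
\]
Here I am using $\widetilde\xi_i=\xi_{m+1-i}^{-1}$. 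The symmetric version of Theorem~\ref{thm:MF_m} with $\psi$ replaced by $\overline\psi$ is proved in exactly the same way, so I will treat it as available.

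I then combine the two displays with \eqref{eq:FE'} and simplify using Tate's relation
\[
\gamma(s,\omega,\psi)\,\gamma(1-s,\omega^{-1},\overline\psi)=1 .
\]
The index pairs $\{m+1-j,\,m+1-i\}$ with $i<j\le m-i$ are exactly the pairs $\{k,l\}$ with $k<l$ and $k+l\ge m+2$. The pairs in the first display are those with $i+j\le m$. The diagonal pairs $i+j=m+1$ are accounted for by the factor in \eqref{eq:FE'}. Together these three families cover every pair $1\le i<j\le m$ exactly once. Solving for $\oZ_{\rm JS}(1-s,\sigma'_m\cdot W_{\widetilde f},\widehat\phi,\varphi_m)$ therefore gives \eqref{femjs}, including the factor $\eta(-1)^{mn}$.

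The main obstacle is dividing by the modifying factors. Their values are nonzero on $\widehat\Omega$, as noted after Theorem~\ref{thm:MF_m}, but I must check that the dual-side factors are finite and nonzero at $(1-s,\widetilde\xi)$. That follows from $(1-s,\widetilde\xi)\in\widehat\Omega^m_{\eta^{-1}}$. I must also confirm that the relevant integrals converge, which follows from Theorem~\ref{thm:FE_m} (1) and Lemma~\ref{lem:CC}. Finally, I will recheck the index bookkeeping carefully in the odd case, where the middle index $n+1$ pairs only with off-diagonal partners.
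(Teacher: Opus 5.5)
Your proposal is correct and follows essentially the same route as the paper, which obtains $(\mathrm{FE}_m)$ from $(\mathrm{MF}_m)$ within the same simultaneous induction. In that step, $(\mathrm{MF}_m)$ is applied to $(s,\xi)$ and, with $(\eta^{-1},\overline\psi)$, to $(1-s,\widetilde\xi)$, then combined with the open-orbit functional equation \eqref{eq:FE'} and $\gamma(s,\omega,\psi)\gamma(1-s,\omega^{-1},\overline\psi)=1$. The paper only states this step as ``clear,'' and your index bookkeeping into $i+j\le m$, $i+j=m+1$ and $i+j\ge m+2$ is the check it leaves implicit.
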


We label Theorem \ref{thm:MF_m} for $m$ by $({\rm MF}_{m})$, and label Theorem \ref{thm2:FE_m} for $m$ by $({\rm FE}_{m})$. 
 Consider the implication  
\be\label{imply3}
({\rm MF}_{m})+ ({\rm FE}_{m}) \Rightarrow ({\rm MF}_{m+1}).
\ee

\begin{proof}[Proof  of Theorems  \ref{thm:MF_m}  and \ref{thm2:FE_m} under \eqref{imply3}]
Note that $({\rm MF}_{0})$, $({\rm FE}_{0})$, $({\rm MF}_{1})$, and $({\rm FE}_{1})$  obviously  hold.  Since we have proved the functional equation \eqref{eq:FE'},  it is clear that 
\[
(\mathrm{MF}_m) \Rightarrow (\mathrm{FE}_m)\quad \textrm{for all $m\geq 1$}.
\] 
Under the assumption \eqref{imply3}, by induction this implies that both $(\mathrm{MF}_m)$ and $(\mathrm{FE}_m)$ holds for all $m\geq 1$. 
\end{proof}


The rest of this section is devoted to a proof of the implication \eqref{imply3}.

\subsection{Godement sections}\label{sec:GS} 
Assume that $({\rm MF}_{m})$ and  $({\rm FE}_{m})$ hold with $m\geq 1$, and write 
\[
\xi = (\xi_1,\xi_2,\dots, \xi_m) \in (\widehat{\bk^\times})^m\quad {\rm and}\quad \xi' = (\xi_1, \xi_2, \dots, \xi_m, \xi_{m+1})\in (\widehat{\bk^\times})^{m+1}. 
\]
Let $f' \in I(\xi')$ and $\phi \in \CS(\bk^{\lfloor (m+1)/2\rfloor})$ throughout the rest of this section. We need to show that $({\rm MF}_{m+1})$ holds for $I(\xi')$, that is,  
\be \label{MFm+1}
\Lambda_{\rm JS}(s,f',\phi,\varphi_{m+1}^{-1}) = \left(\prod_{1\leq i <j \leq m+1-i} \gamma(s, \xi_i \xi_j \eta^{-1},\psi)\right) \cdot 
\oZ_{\rm JS}(s, W_{f'}, \phi, \varphi_{m+1}^{-1})
\ee
where $(s, \xi') \in \widehat\Omega_\eta^{m+1}$.  Note that in this case the integrals on both sides of \eqref{MFm+1} converge absolutely (by Lemma~\ref{lem:CC}  and Corollary~\ref{cor:conv}), and that 
$(s, \xi')\in \widehat\Omega_\eta^{m+1}$ implies $(s, \xi) \in \Omega_\eta^m$. 

The basic idea for the proof of \eqref{MFm+1} is to apply the theory of Godement sections, for which we  recall some basic results from \cite{J09}. 
For $f\in I(\xi)$ and $\Phi \in \CS(\bk^{m\times (m+1)})$, set
\be\label{gs}
{\rm g}^+_{\Phi, f, \xi'}(h) :=    \xi_{m+1}( h) | h|_\bk^{\frac{m}{2}} 
\int_{G_m} \Phi([h_1 \mid 0]h) f(h_1^{-1}) \xi_{m+1}( h_1 ) |   h_1|_\bk^{\frac{m+1}{2}}\od\!h_1,
\ee
where $h\in G_{m+1}$, and $0$ denotes the zero vector in $\bk^{m\times 1}$. 
Let 
\[
\CY_m:=\Set{ Y\in \bk^{m\times (m+1)} | {\rm rank}\, Y = m }.
\]
As in \cite[Section 7.2]{J09}, there are natural left and right actions of $G_{m+1}$ and $G_m$, respectively, on $\CS(\bk^{m\times (m+1)})$, given by 
\[
(h\cdot\Phi\cdot h_1)(Y) := \Phi(h_1 Y h),\quad h\in G_{m+1}, \ h_1\in G_m, \  Y\in \bk^{m\times (m+1)},\ \Phi\in \CS(\bk^{m\times (m+1)}),
\]
and these actions clearly preserve $\CS(\CY_m)$. Here $\CS(\CY_m)$ denotes the space of Schwartz functions on $\CY_m$ (see \cite{AG08} for the notion of Schwartz functions on Nash manifolds).

The following  are consequences of  Propositions 7.1 and 7.2  in \cite{J09}. 

\begin{prpl}[Godement section] \label{prop:gs}

 \noindent {\rm (1)} If either $\Re(\xi_{m+1}) > \Re(\xi_i) - 1$ for all $i=1,2,\dots, m$, or $\Phi \in \CS(\CY_m)$, then \eqref{gs} converges absolutely  and defines an element of $I(\xi')$.  In this case, if $f' = {\rm g}^+_{\Phi, f, \xi'}\in I(\xi')$, then
\be \label{gsW}
\begin{aligned}
W_{f'}(h)   = \xi_{m+1}( h) | h|_\bk^{\frac{m}{2}} \int_{G_m}&\int_{\bk^{m}}\Phi(h_1[1_m \mid  {}^tz]h)\overline\psi(e_m {}^tz)\od\! z \\
& W_f(h_1^{-1}) \xi_{m+1}( h_1) | h_1|_\bk^{\frac{m+1}{2}}\od\! h_1,\quad h\in G_{m+1},
\end{aligned}
\ee
where the integral converges absolutely.

\noindent {\rm (2)} As a vector space, $I(\xi')$ is spanned by the functions 
${\rm g}^+_{\Phi, f, \xi'}$ with $f\in I(\xi)$ and $\Phi \in \CS(\CY_m)$. 
\end{prpl}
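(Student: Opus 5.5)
The plan is to derive both parts from the Rankin--Selberg theory of Godement sections as developed by Jacquet in \cite{J09}, Propositions 7.1 and 7.2, whose content is precisely these two assertions.

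For part (1), assume first that $\xi$ takes positive real values and that $\Phi,f\ge 0$, which is harmless for absolute convergence. Use the Iwasawa decomposition $h_1=b k$ with $b\in\overline{B}_m$ and $k\in K_m$. The factor $f(h_1^{-1})$ then contributes a character of $\overline{B}_m$, and the integral of $\Phi([h_1\mid 0]h)$ over the lower triangular coordinates becomes a product of one-variable Tate integrals in the diagonal entries $b_i$. The exponent of $|b_i|$ is controlled by $\Re(\xi_{m+1})-\Re(\xi_i)+1$, so these integrals converge exactly under the stated inequalities. If instead $\Phi\in\CS(\CY_m)$, then $\Phi([h_1\mid0]h)$ vanishes unless $h_1$ stays in a compact modulo nothing, since rank $m$ forces $\|h_1\|$ and $\|h_1^{-1}\|$ to be bounded on the support. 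In that case convergence is automatic for all $\xi$.

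Next, left $\overline{B}_{m+1}$-equivariance of ${\rm g}^+_{\Phi,f,\xi'}$ follows by the substitution $h_1\mapsto h_1 b'$, where $b'$ is the upper-left block of a lower triangular $b$. The normalizing powers $|h|^{m/2}$ and $|h_1|^{(m+1)/2}$ are chosen exactly to produce $\xi'\delta^{1/2}$. Smoothness comes from Lemma~\ref{lem:Lebesgue}.

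For the formula \eqref{gsW}, compute the Jacquet integral \eqref{wf} of ${\rm g}^+$. Factor $N_{m+1}=N_m\ltimes U_{m+1}$, where $U_{m+1}$ is the last column, parametrized by ${}^tz$. The integral over the column turns $\Phi([h_1\mid 0]uh)$ into $\Phi(h_1[1_m\mid {}^tz]h)$ after the substitution $z\mapsto h_1^{-1}z$. The character $\overline\psi(e_m{}^tz)$ appears after moving the $N_m$-part across. The remaining $N_m$-integral together with $f(h_1^{-1})$ reassembles into $W_f(h_1^{-1})$. Exchanging the order of integration is first justified in a region of absolute convergence and then extended by holomorphic continuation in $\xi$, which I expect to be the main technical point.

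For part (2), fix $f'\in I(\xi')$ and note that the span is $G_{m+1}$-stable, since $h\cdot\Phi$ stays in $\CS(\CY_m)$. I would argue through the map $\CS(\CY_m)\otimes I(\xi)\to I(\xi')$. The space $\CY_m$ is the single $G_m\times G_{m+1}$-orbit of $[1_m\mid0]$, with stabilizer the mirabolic-type group. Hence $\CS(\CY_m)$, coinvarianted against $I(\xi)$ over $G_m$, is identified with the Schwartz induction of $f$ twisted by $\xi_{m+1}$ from the parabolic of type $(m,1)$. By induction in stages this equals $I(\xi')$, which is compactly induced from a compact quotient, and the map is surjective, using the Dixmier--Malliavin lemma in the archimedean case. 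This surjectivity, together with the identification of the normalizations, is the part I would take directly from \cite[Prop.~7.2]{J09}.
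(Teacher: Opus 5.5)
This is essentially the paper's route: the paper gives no argument of its own and derives both parts from Propositions 7.1 and 7.2 of \cite{J09}, and your sketches agree with those results — the exponent $\Re(\xi_{m+1})-\Re(\xi_i)+1$, the equivariance check, the factorization of $N_{m+1}$ for the Jacquet integral, and induction in stages from $\overline{P}_{m,1}$ (which is the actual stabilizer of $[1_m\mid 0]$, not a mirabolic group). Two small corrections: take $h_1=kb$ rather than $h_1=bk$, so that $f(h_1^{-1})=f(b^{-1}k^{-1})$ picks up the character of $\overline{B}_m$; and in the archimedean case elements of $\CS(\CY_m)$ are not compactly supported, so convergence for $\Phi\in\CS(\CY_m)$ comes from the rapid decay of $\Phi([h_1\mid 0]h)$ in $\|h_1\|+\|h_1^{-1}\|$ against the moderate growth of $f$, not from compact support.
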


For a standard section $\{f_{\xi'}\}_{\xi'\in \CM}$ on a connected component $\CM$ of $(\widehat{\bk^\times})^{m+1}$ and $\phi\in \CS(\bk^{\lfloor (m+1)/2\rfloor})$, 
by Lemma~\ref{lem:CC}, 
the function 
\[
(s,\xi')\mapsto \Lambda_{\rm JS}(s,f_{\xi'},\phi,\varphi_{m+1}^{-1})
\]
is holomorphic on $\widehat\Omega_{\eta}^{m+1}\cap (\BC\times\CM)$;
and by Corollary~\ref{cor:conv}, the function 
\[
(s,\xi')\mapsto \oZ_{\rm JS}(s, W_{f_{\xi'}}, \phi, \varphi_{m+1}^{-1})
\]
is holomorphic on 
\[
\Set{(s,\xi')\in \BC\times \CM | \Re(s)>\Re(\eta) - 2\min_{i=1,\ldots,m+1}\Re(\xi_i)}.
\]
Then, by the uniqueness of holomorphic continuation, to prove that \eqref{MFm+1} holds for $(s,\xi')\in \widehat\Omega_\eta^{m+1}$, it suffices to prove that 
it holds for fixed $(s,\xi)\in \Omega_\eta^m$ and sufficiently large $\Re(\xi_{m+1})$. 

In the rest of this section, we assume that 
\[
(s,\xi)\in \Omega_\eta^m\quad\textrm{and}\quad \textrm{$\Re(\xi_{m+1})$ is  sufficiently large}. 
\]
In particular, 
the first condition in Proposition~\ref{prop:gs} (1) holds, namely
\[
\Re(\xi_{m+1}) > \Re(\xi_i) - 1\ \textrm{ for all $i=1,2,\dots, m$.}
\]
In view of Proposition~\ref{prop:gs} (2), for the purpose of proving \eqref{MFm+1} we assume without loss of generality that 
\be \label{f'gs}
f' = {\rm g}^+_{\Phi, f, \xi'},\quad \textrm{where}\ f \in I(\xi)\ {\rm and}\  \Phi\in \CS(\bk^{m\times (m+1)}).
\ee

To ease notation, for each  subgroup $\CG$ of $G_m$ put 
\[
\CG^+:=\set{ h^+ \mid h \in \CG} \subset G_{m+1},
\]
where for $h\in G_m$ we write 
$h^+ := \begin{bmatrix} h & 0 \\ 0 & 1 \end{bmatrix} \in G_{m+1}$.

We need to consider the even and odd cases for $m$ separately.

\subsection{The case $G_{2n}\to G_{2n+1}$}
Assume that $m=2n$.

\subsubsection{$\oZ_{\mathrm{JS}}$-side}
We start from $\oZ_{\mathrm{JS}}(s, W_{f'}, \phi, \varphi_{2n+1}^{-1})$. Define a subgroup of $S_{2n+1}$ by
\be \label{Sodd'}
S_{2n+1}' := \{\, h^+ \bar u_x \mid h\in S_{2n},\ x\in \bk^n \,\},
\ee
where
\be\label{ux}
\bar u_x := \begin{bmatrix}
1_n & 0 & 0 \\
0 & 1_n & 0 \\
0 & x & 1
\end{bmatrix},\qquad x\in \bk^n.
\ee
Define $\overline{S}_{2n+1}' := (\sigma_{2n+1}^{-1}N_{2n+1}\sigma_{2n+1}\cap S'_{2n+1}) \backslash S'_{2n+1}$.
Then the natural map
\[
S_{2n+1}'\hookrightarrow S_{2n+1}\twoheadrightarrow \overline{S}_{2n+1}\quad \text{(see \eqref{defsm})}
\]
induces a bijection $\overline{S}_{2n+1}' \xrightarrow{\sim} \overline{S}_{2n+1}$.

Note from \eqref{sigmam} that $\sigma_{2n+1} = \sigma_{2n}^+$, viewed as permutation matrices. The integral \eqref{JSodd} can also be written as
\be \label{JSodd'}
\begin{aligned}
\oZ_{\mathrm{JS}}(s, W, \phi,\varphi_{2n+1}^{-1})
&=
\int_{\overline{S}_{2n+1}'}
W(\sigma_{2n+1} h')
\left(R_{\varphi_{2n+1}^{-1}}(h')\phi\right)(0) |h'|_\bk^{\frac{s}{2}} \od\! h' \\
&=
\int_{\overline{S}_{2n}}
W_\phi( (\sigma_{2n} h)^+ ) \, \varphi_{2n}^{-1}(h) |h|_\bk^{\frac{s-1}{2}} \od\! h,
\end{aligned}
\ee
where $W_\phi\in \Ind_{N_{2n+1}}^{G_{2n+1}}\psi_{2n+1}$ denotes the action of $\phi$ on $W$ given by 
\[
W_\phi(h') := \int_{\bk^n} W(h' \bar u_x) \phi(x) \od\! x,\qquad h'\in G_{2n+1}.
\]
In the same vein, we will write $\Phi_\phi$ and $f'_\phi$ for similar actions of $\phi$ on $\Phi\in \CS(\bk^{2n\times (2n+1)})$ and $f'\in I(\xi')$, respectively.
By \eqref{gsW}, for $h\in S_{2n}$ we have
\[
\begin{aligned}
W_{f',\phi}( (\sigma_{2n} h)^+ )
&=
\xi_{2n+1}(\sigma_{2n} h) |h|_\bk^n
\int_{G_{2n}}
\int_{\bk^{2n}}
\Phi_\phi\bigl(h_1[1_{2n} \mid {}^t z](\sigma_{2n} h)^+\bigr)
\,\overline\psi(e_{2n}{}^t z) \od\! z \\
&\qquad W_f(h_1^{-1}) \xi_{2n+1}(h_1) |h_1|_\bk^{n+\frac{1}{2}} \od\! h_1.
\end{aligned}
\]

We note that
\[
h_1[1_{2n} \mid {}^t z](\sigma_{2n} h)^+
= [h_1 \sigma_{2n} h \mid h_1 {}^t z].
\]
After the change of variables $h_1\mapsto h_1(\sigma_{2n} h)^{-1}$ and $z\mapsto z\, {}^t(\sigma_{2n} h)$, we obtain
\[
\begin{aligned}
W_{f',\phi}( (\sigma_{2n} h)^+ )
&=
|h|_\bk^{\frac{1}{2}}
\int_{G_{2n}}
\int_{\bk^{2n}}
\Phi_{\phi, h_1}(z) \, \overline\psi(e_{2n} h \, {}^t z) \od\!z \\
&\qquad W_f(\sigma_{2n} h h_1^{-1}) \xi_{2n+1}(h_1) |h_1|_\bk^{n+\frac{1}{2}} \od\! h_1,
\end{aligned}
\]
where $\Phi_{\phi, h_1}\in \CS(\bk^{2n})$ is defined by
\be \label{Phi-res}
\Phi_{\phi, h_1}(z) := \Phi_\phi(h_1[1_{2n} \mid {}^t z]),\qquad z \in \bk^{2n}.
\ee

Recall the right action of $h = \begin{bmatrix} g & Xg   \\ 0& g \end{bmatrix}\in S_{2n}$ on $\bk^n$ given by \eqref{Sact}. Noting that \[
e_{2n}h = (0, e_n g) = (0, e_n\cdot h), 
\]
where $0$ is the zero vector in $\bk^n$, we obtain
\[
W_{f',\phi}((\sigma_{2n} h)^+)
=
|h|_\bk^{\frac{1}{2}}
\int_{G_{2n}}
\CF_{\overline\psi}(\Phi_{\phi, h_1})(0, e_n\cdot h)
W_f(\sigma_{2n} h h_1^{-1}) \xi_{2n+1}(h_1) |h_1|_\bk^{n+\frac{1}{2}} \od\!h_1.
\]
Plugging this into \eqref{JSodd'} for $W=W_{f'}$ yields an iterated integral
\[
\begin{aligned}
\oZ_{\mathrm{JS}}(s, W_{f'}, \phi, \varphi_{2n+1}^{-1})
&=
\int_{\overline{S}_{2n}}
\int_{G_{2n}}
\CF_{\overline\psi}(\Phi_{\phi, h_1})(0, e_n\cdot h)
W_f(\sigma_{2n} h h_1^{-1}) \xi_{2n+1}(h_1) |h_1|_\bk^{n+\frac{1}{2}} \od\!h_1 \\
&\qquad \varphi_{2n}^{-1}(h) |h|_\bk^{\frac{s}{2}} \od\!h.
\end{aligned}
\]
By Lemma~\ref{lem:switch} below and Fubini's theorem, we can switch the order of integration and obtain the recurrence relation
\be \label{rec}
\begin{aligned}
&\oZ_{\mathrm{JS}}(s, W_{f'}, \phi, \varphi_{2n+1}^{-1}) \\
&=
\int_{G_{2n}}
\int_{\overline{S}_{2n}}
W_f(\sigma_{2n} h h_1^{-1})
\CF_{\overline\psi}(\Phi_{\phi, h_1})(0, e_n\cdot h)
\varphi_{2n}^{-1}(h) |h|_\bk^{\frac{s}{2}} \od\!h
\,\xi_{2n+1}(h_1) |h_1|_\bk^{n+\frac{1}{2}} \od\!h_1 \\
&=
\int_{G_{2n}}
\oZ_{\mathrm{JS}}\bigl(s, W_{h_1^{-1}\cdot f}, \CF_{\overline\psi}(\Phi_{\phi, h_1})(0, \cdot), \varphi_{2n}^{-1}\bigr)
\,\xi_{2n+1}(h_1) |h_1|_\bk^{n+\frac{1}{2}} \od\!h_1.
\end{aligned}
\ee

\begin{leml} \label{lem:switch}
The double integral in \eqref{rec} converges absolutely when $(s,\xi)\in \Omega^{2n}_\eta$ is fixed and $\Re(\xi_{2n+1})$ is sufficiently large.
\end{leml}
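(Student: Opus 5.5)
We reduce to positive integrands and bound the double integral in \eqref{rec} by a product of an inner Jacquet--Shalika estimate and an outer Godement-type integral over $G_{2n}$. As in the proof of Lemma~\ref{lem:CC}, we may twist so that $\eta$ is unitary. We may also assume that $\xi$ and $\xi_{2n+1}$ are positive real valued, and we replace $f$, $\Phi$, $\phi$ by their absolute values where allowed. For $W_f$ we cannot take absolute values directly, so we keep $|W_f|$ and use the Whittaker estimate of Lemma~\ref{whit-est}.

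The first step is to write $h_1 = k_1 b$ with $k_1\in K_{2n}$ and $b\in \overline{B}_{2n}$, or better in the form $h_1^{-1}\in N_{2n}A_{2n}K_{2n}$. We then use Proposition~\ref{prop:conv0}(3) with $\mu$ chosen so that $|\Re\xi|\prec\mu$ (Proposition~\ref{whit-incl}) and $\Re(s)>\Re(\eta)-2\min\mu$; this is possible because $(s,\xi)\in\Omega^{2n}_\eta$ gives $\Re(s)>-2\Re(\xi_1)+\Re(\eta)$. Applied to the inner $\overline{S}_{2n}$-integral with the absolute values inserted, it gives
\[
\int_{\overline S_{2n}}|W_f(\sigma_{2n}hh_1^{-1})\,\CF_{\overline\psi}(\Phi_{\phi,h_1})(0,e_n\cdot h)|\,|h|_\bk^{\Re(s)/2}\od\!h\le p(h_1^{-1}\cdot W_f)\,q\bigl(\CF_{\overline\psi}(\Phi_{\phi,h_1})(0,\cdot)\bigr).
\]
Since the proof of Proposition~\ref{prop:conv0} bounds exactly the absolute integrand, this inequality holds as stated.

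The second step is to control the two seminorms as functions of $h_1$. The seminorm $p$ on $\CC_\mu$ is dominated by a Schwartz-type seminorm, so for the right translate we get $p(h_1^{-1}\cdot W_f)\le C\,\|h_1^{-1}\|^{N}$ for some $N$ depending only on $\mu$. This is the moderate growth of the regular action on the Whittaker space, which is uniform since the inclusion in Proposition~\ref{whit-incl} is continuous. The seminorm $q$ is a weighted sup norm of a partial Fourier transform of $\Phi_\phi(h_1[1_{2n}\mid {}^tz])$. Since $\Phi_\phi$ is Schwartz on $\bk^{2n\times(2n+1)}$, for every $N'$ we have $q(\cdots)\le C_{N'}\,(1+\|h_1\|)^{-N'}\,|h_1|_\bk^{-1}$, where the Jacobian $|h_1|^{-1}$ comes from the change of variables $z\mapsto zh_1^{-1}$ in the Fourier transform.

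The remaining outer integral is
\[
\int_{G_{2n}}(1+\|h_1\|)^{-N'}\|h_1^{-1}\|^{N}\,|h_1|_\bk^{n-\frac12+\Re\xi_{2n+1}}\od\!h_1.
\]
This converges once $\Re(\xi_{2n+1})$ is large enough, by the standard Godement--Jacquet estimate. Here $\|h_1^{-1}\|^N$ is bounded by $|h_1|_\bk^{-N}\|h_1\|^{N(2n-1)}$ via the adjugate, and the integral $\int_{G_{2n}}\Phi(h_1)|h_1|^t\od\!h_1$ converges for $\Re t$ large.

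I expect the main obstacle to be the second step, specifically getting the polynomial dependence of $p(h_1^{-1}\cdot W_f)$ on $h_1$ uniformly. If the abstract seminorm does not allow this directly, I would fall back on Lemma~\ref{whit-est} applied to $\sigma_{2n}hh_1^{-1}$ via Iwasawa decomposition, redoing the estimate of Proposition~\ref{prop:conv0} with $\bar\sigma(gh_1^{-1})\le\bar\sigma(g)+\bar\sigma(h_1^{-1})$ and $t^{|\mu|}$ submultiplicativity, which produces the factor $e^{c\bar\sigma(h_1^{-1})}$, i.e. polynomial growth in $\|h_1^{-1}\|$.
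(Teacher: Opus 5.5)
Your proof is correct and follows essentially the same route as the paper. You bound the inner $\overline{S}_{2n}$-integral by the continuous seminorms of Corollary~\ref{cor:conv} (that is, Proposition~\ref{prop:conv0}(3) combined with Proposition~\ref{whit-incl}), use moderate growth to get polynomial dependence on $\|h_1\|_{\mathrm{HC}}$, extract the Jacobian $|h_1|_\bk^{-1}$ and the Schwartz decay in $h_1$ from the Fourier transform, and conclude with the Godement--Jacquet-type convergence of \cite[Lemma 3.3 (ii)]{J09}.

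The differences are cosmetic. The paper first reduces to $\Phi_\phi=\Phi'\otimes\phi'$, so that the transform is explicitly $\Phi'(h_1)\,\CF_{\overline\psi}(\phi')(zh_1^{-1})\,|h_1|_\bk^{-1}$. You instead use the joint Schwartz decay of the partial Fourier transform of $\Phi_\phi$ directly, and check the final integral by hand via the adjugate bound. Strictly, your moderate-growth factor should be $\|h_1\|_{\mathrm{HC}}^N$ rather than $\|h_1^{-1}\|^N$; this is harmless because the decay in $\|h_1\|$ is arbitrary.
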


\begin{proof}
Without loss of generality, assume that
\[
\Phi_\phi( X \mid {}^t z) = \Phi'(X) \phi'(z)\quad \textrm{for all $X\in \bk^{2n\times 2n}$ and $z\in \bk^{2n}$,}
\]
 where $\Phi'\in \CS(\bk^{2n\times 2n})$ and $\phi'\in \CS(\bk^{2n})$. Then from \eqref{Phi-res} we find that
\[
\CF_{\overline\psi}(\Phi_{\phi, h_1})(z)
=
\Phi'(h_1) \CF_{\overline\psi}(\phi')(z h_1^{-1}) |h_1|_\bk^{-1}.
\]
Thus, by Corollary~\ref{cor:conv}, it suffices to show that for every real number  $M>0$, the integral
\[
\int_{G_{2n}}
\| h_1\|_{\mathrm{HC}}^M \Phi'(h_1) \xi_{2n+1}(h_1) |h_1|_\bk^{n-\frac{1}{2}} \od\!h_1
\]
converges absolutely for $\Re(\xi_{2n+1})$ sufficiently large, where
\[
\|h_1\|_{\mathrm{HC}} := \|h_1\| + \|h_1^{-1}\|
\]
and $\|\cdot\|$ is the norm on $M_{2n}$ defined by
\[
\| [x_{i,j}]_{i,j=1,\dots,2n} \| := \sum_{i,j=1}^{2n} |x_{i,j}|_\bk
\]
(\cf \cite[Section 3.1]{J09} for the archimedean case). This is \cite[Lemma 3.3 (ii)]{J09}.
\end{proof}

For $z=(z_1,z_2)$ with $z_1,z_2\in \bk^n$,  write $\CF_{\psi'}^1$, $\CF_{\psi'}^2$ for the partial Fourier transforms on $\CS(\bk^{2n})$ with respect to the variables $z_1,z_2$ and a nontrivial unitary character $\psi'$ of $\bk$. Clearly, on $\CS(\bk^{2n})$ it holds that
\be \label{PF}
\CF_{\psi'} = \CF_{\psi'}^1 \circ \CF_{\psi'}^2 = \CF_{\psi'}^2 \circ \CF_{\psi'}^1.
\ee
In view of $(\mathrm{FE}_{2n})$ and \eqref{PF},  we have that 
\[
\begin{aligned}
& \gamma(s, I(\xi), \wedge^2\otimes\eta^{-1}, \psi)
\oZ_{\mathrm{JS}}\bigl(s, W_{h_1^{-1}\cdot f}, \CF_{\overline\psi}(\Phi_{\phi, h_1})(0, \cdot), \varphi_{2n}^{-1}\bigr) \\
&= \oZ_{\mathrm{JS}}\bigl(1-s, \sigma'_{2n}\cdot W_{{}^t h_1\cdot \tilde f}, \CF_{\overline\psi}^1(\Phi_{\phi, h_1})(0,\cdot), \varphi_{2n}\bigr).
\end{aligned}
\]
Applying $(\mathrm{MF}_{2n})$ for $\tilde{\xi} = (\xi_{2n}^{-1}, \dots, \xi_2^{-1}, \xi_1^{-1})$, and noting from Remark \ref{rmk:Omega} that
\[
(1-s, \widetilde{\xi})\in \Omega_{\eta^{-1}}^m,
\]
 we obtain
\[
\begin{aligned}
& \left(\prod_{1\leq i< j \leq 2n-i}
\gamma(1-s, \xi_{2n+1-i}^{-1} \xi_{2n+1-j}^{-1}\eta, \overline\psi)\right)\cdot 
\oZ_{\mathrm{JS}}\bigl(1-s, \sigma'_{2n}\cdot W_{{}^t h_1\cdot \tilde f}, \CF_{\overline\psi}^1(\Phi_{\phi, h_1})(0,\cdot), \varphi_{2n}\bigr) \\
&= \Lambda_{\mathrm{JS}}\bigl(1-s, \sigma'_{2n}{}^t h_1\cdot \tilde f, \CF_{\overline\psi}^1(\Phi_{\phi, h_1})(0,\cdot), \varphi_{2n}\bigr).
\end{aligned}
\]
Using $\gamma(s,\omega,\psi)\gamma(1-s,\omega^{-1},\overline\psi)=1$ for $\omega\in\widehat{\bk^\times}$, it is straightforward to check that
\[
\gamma(s, I(\xi), \wedge^2\otimes\eta^{-1}, \psi)
\prod_{1\leq i< j \leq 2n-i}
\gamma(1-s, \xi_{2n+1-i}^{-1} \xi_{2n+1-j}^{-1}\eta, \overline\psi)
=
\prod_{1\leq i<j\leq 2n+1-i}
\gamma(s, \xi_i\xi_j\eta^{-1}, \psi).
\]
From \eqref{rec} and the above calculations, we find that \eqref{MFm+1} for $m=2n$ is reduced to the recurrence relation
\be \label{rec'}
\begin{aligned}
\Lambda_{\mathrm{JS}}(s, f', \phi, \varphi_{2n+1}^{-1})
&=
\int_{G_{2n}}
\Lambda_{\mathrm{JS}}\bigl(1-s, \sigma'_{2n}{}^t h_1\cdot \tilde f, \CF_{\overline\psi}^1(\Phi_{\phi, h_1})(0,\cdot), \varphi_{2n}\bigr) \\
&\qquad \xi_{2n+1}(h_1) |h_1|_\bk^{n+\frac{1}{2}} \od\!h_1.
\end{aligned}
\ee

\subsubsection{$\Lambda_{\rm JS}$-side}

We now prove \eqref{rec'}. Recall that
\[
\Lambda_{\rm JS}(s, f', \phi, \varphi_{2n+1}^{-1})    
= \int_{S_{2n+1}} f'(z_{2n+1}h') \bigl(R_{\varphi_{2n+1}^{-1}}(h')\phi\bigr)(0) |h'|_\bk^{\frac{s}{2}}\od\! h',
\]
where $S_{2n+1} = \Set{ u_y  h^+ \bar u_x \mid h\in S_{2n},\ x,y\in \bk^n}$, with $\bar u_x$ as in \eqref{ux}, and
\be \label{uy}
u_y:= \begin{bmatrix} 1_n & & {}^ty \\ & 1_n & \\ & & 1\end{bmatrix},\qquad y\in \bk^n.
\ee

Let $u_y  h^+ \bar u_x\in S_{2n+1}$ be as above. Using Proposition~\ref{prop:Rodd} (1), we obtain
\[
\bigl(R_{\varphi_{2n+1}^{-1}}(u_y h^+ \bar u_x) \phi\bigr)(0) = \varphi_{2n}^{-1}(h) \phi(x).
\]
Consequently,
\be \label{Lambdaodd}
\Lambda_{\rm JS}(s, f', \phi, \varphi_{2n+1}^{-1})    
= \int_{S_{2n}} \int_{\bk^{n}} f'_\phi\bigl(z_{2n+1} u_y h^+ \bigr) \od\! y \,
\varphi_{2n}^{-1}(h) |h|_\bk^{\frac{s-1}{2}} \od\! h.
\ee

By \eqref{gs}, we have
\[
\begin{aligned}
f'_\phi(z_{2n+1} u_y h^+ ) 
&= \xi_{2n+1}(z_{2n+1}h^+) |h|_\bk^n \\
&\quad \cdot \int_{G_{2n}} \Phi_\phi\bigl((h_1 \mid 0)z_{2n+1} u_y h^+ \bigr) f(h_1^{-1}) \xi_{2n+1}(h_1) |h_1|_\bk^{n+\frac{1}{2}}\od\! h_1.
\end{aligned}
\]
A direct computation gives
\[
[h_1 \mid 0] z_{2n+1} u_y h^+ = [h_1 \mid 0]
\begin{bmatrix} z_{2n} h  & {}^t(y, v_n) \\ & 1\end{bmatrix}
= h_1 [z_{2n} h \mid {}^t(y, v_n)].
\]
Making the change of variables $h_1 \mapsto h_1 (z_{2n}h)^{-1}$, and using the identities
$\det z_{2n+1}= \det z_{2n}$ and
$(y, v_n){}^t(z_{2n}h)^{-1} = (y, v_n)z_{2n} {}^t\!h^{-1} = (y, v_n) {}^t\!h^{-1}$,
we obtain
\[
f'_\phi(z_{2n+1} u_y h^+ ) 
= |h|_\bk^{-\frac{1}{2}}
\int_{G_{2n}} \Phi_{\phi, h_1}\bigl((y, v_n){}^t\!h^{-1}\bigr)
f(z_{2n} h h_1^{-1}) \xi_{2n+1}(h_1) |h_1|_\bk^{n+\frac{1}{2}}\od\! h_1.
\]

The order of integration over $h_1\in G_{2n}$ and $y\in \bk^n$ in \eqref{Lambdaodd} may be interchanged. Then, for $h\in S_{2n}$ as in \eqref{Sact}, an affine change of variable in $y$ yields
\[
\int_{\bk^n}\Phi_{\phi, h_1}\bigl((y, v_n){}^t\!h^{-1}\bigr) \od\! y
= |g|_\bk \int_{\bk^n}\Phi_{\phi, h_1}(y, v_n {}^t\!g^{-1}) \od\! y
= |h|_\bk^{\frac{1}{2}} \,\CF_{\overline\psi}^1(\Phi_{\phi,h_1})(0, v_n\cdot \widehat h).
\]
Therefore,
\[
\begin{aligned}
\Lambda_{\rm JS}(s, f', \phi, \varphi_{2n+1}^{-1})    
&= \int_{S_{2n}} \int_{G_{2n}} f(z_{2n}h h_1^{-1})
   \CF^1_{\overline\psi}(\Phi_{\phi,h_1})(0, v_n\cdot\widehat h)
   \xi_{2n+1}(h_1) |h_1|_\bk^{n+\frac{1}{2}}\od\! h_1 \\
&\qquad \cdot \varphi_{2n}^{-1}(h) |h|_\bk^{\frac{s-1}{2}} \od\! h.
\end{aligned}
\]

Assuming absolute convergence, we switch the order of integration to get
\be\label{doub'}
\begin{aligned}
\Lambda_{\rm JS}(s, f', \phi, \varphi_{2n+1}^{-1})    
&= \int_{G_{2n}} \int_{S_{2n}} f(z_{2n}h h_1^{-1})
   \CF^1_{\overline\psi}(\Phi_{\phi,h_1})(0, v_n\cdot\widehat h)
   \varphi_{2n}^{-1}(h) |h|_\bk^{\frac{s-1}{2}} \od\! h \\
&\qquad \cdot \xi_{2n+1}(h_1) |h_1|_\bk^{n+\frac{1}{2}}\od\! h_1.
\end{aligned}
\ee

On the other hand,
\[
\begin{aligned}
&\Lambda_{\rm JS}(1-s, \sigma'_{2n}{}^th_1\cdot\tilde f, \CF_{\overline\psi}^1(\Phi_{\phi, h_1})(0,\cdot), \varphi_{2n}) \\
&= \int_{S_{2n}} f(w_{2n} z_{2n} {}^th^{-1} \sigma'_{2n} h_1^{-1})
   \CF_{\overline\psi}^1(\Phi_{\phi, h_1})(0, v_n\cdot h)
   \varphi_{2n}(h) |h|_\bk^{\frac{1-s}{2}}\od\! h \\
&= \int_{S_{2n}} f(z_{2n}\widehat h h_1^{-1})
   \CF_{\overline\psi}^1(\Phi_{\phi, h_1})(0, v_n\cdot h)
   \varphi_{2n}(h) |h|_\bk^{\frac{1-s}{2}}\od\! h \\
&= \int_{S_{2n}} f(z_{2n} h h_1^{-1})
   \CF_{\overline\psi}^1(\Phi_{\phi, h_1})(0, v_n\cdot \widehat h)
   \varphi_{2n}^{-1}(h) |h|_\bk^{\frac{s-1}{2}}\od\! h.
\end{aligned}
\]

The same arguments as in the proof of Lemma~\ref{lem:switch}, together with $({\rm MF}_{2n})$, show that \eqref{doub'} is absolutely convergent. This proves \eqref{rec'}, and hence completes the proof of \eqref{MFm+1} for $m=2n$.

\subsection{The case $G_{2n+1}\to G_{2n+2}$} Assume that $m=2n+1$.  

\subsubsection{$\oZ_{\rm JS}$-side}

We first record some group-theoretic preliminaries. From \eqref{sigmam} one readily verifies that
\be\label{varsig}
\sigma_{2n+2} = \sigma_{2n+1}^+\varsigma_n^+,\quad \text{where}\quad \varsigma_n:=
\begin{bmatrix}
1_{n}\\
& 0& 1_{n} \\
& 1 &0
\end{bmatrix}\in G_{2n+1}.
\ee
Let $S_{2n+1}'$ be the subgroup of $S_{2n+1}$ defined in \eqref{Sodd'}, and put
\[
T_n:=\varsigma_n^{-1} S_{2n+1}' \varsigma_n = \Set{ \begin{bmatrix} g & 0 & Xg  \\ & 1 & x \\ & & g \end{bmatrix}  | 
 \begin{array}{l} g\in G_{n}, X\in M_{n} \\ x \in \bk^{1\times n} \end{array}}.
\]
Then $T_n^+\subset S_{2n+2}$. The map 
\[
S_{2n+1}' \to S_{2n+2}, \quad h\mapsto (\varsigma_n^{-1} h \varsigma_n)^+
\]
induces an embedding 
$\overline{S}_{2n+1}'\hookrightarrow \overline{S}_{2n+2}$; we denote its image by $\overline{T}_n^+ \subset \overline{S}_{2n+2}$.

Define a subgroup $R_n$ of $G_{n+1}$ by
\[
R_n :=\Set{  \begin{bmatrix} 1_{n} \\  v & a \end{bmatrix} | a\in\bk^\times,  v\in \bk^{n} },
\]
so that $\overline{P}_{n,1}:=G_{n}^+ R_n$ is the block lower triangular maximal parabolic subgroup of $G_{n+1}$ of type $(n,1)$. 
It is easy to see that $\overline{P}_{n,1}^\dag$ (as in \eqref{dag}) normalizes the unipotent radical of $T_n^+$; consequently, 
$T_n^+ R_n ^\dag$ is a subgroup of $S_{2n+2}$. Moreover, the multiplication map 
$T_n^+\times R_n ^\dag \to T_n^+ R_n ^\dag$ is bijective, and the multiplication map 
$\overline{T}_n ^+\times R_n ^\dag \to \overline{S}_{2n+2}$ is an embedding with open dense image. 
It follows that the integral \eqref{JSeven} can be rewritten as
\be \label{JSeven'}
\begin{aligned}
& \oZ_{\rm JS}(s, W, \phi, \varphi_{2n+2}^{-1}) \\
 = &  \int_{ R_n } \int_{\overline{T}_n^+  } W(\sigma_{2n+2} h r^\dag)  \phi(e_{n+1}\cdot h r^\dag)  \varphi_{2n+2}^{-1}(h r^\dag) |h|_\bk^{\frac{s-1}{2}} |r|_\bk^{s}\od\! h \od\! r \\
 = &  \int_{R_n }\int_{\overline{S}_{2n+1}'}W ((\sigma_{2n+1} h \varsigma_n)^+r^\dag) \phi(e_{n+1} r) \varphi_{2n+1}'^{-1}(h)\eta^{-1}(r) |h|_\bk^{\frac{s-1}{2}} |r|_\bk^{s}\od\! h \od\! r,
\end{aligned}
\ee
where $\varphi_{2n+1}'$ is the character of $S_{2n+1}'$ given by
\be \label{S'}
h = \begin{bmatrix} g & Xg &0\\ & g&0 \\ & x & 1\end{bmatrix} \mapsto \eta(g)\psi({\rm tr}\, X),\qquad g\in G_{n},\ X\in M_{n},\ x\in \bk^{n}. 
\ee

Now take $f' ={\rm g}^+_{\Phi, f, \xi'}$ as in \eqref{f'gs}. For $h\in S_{2n+1}'$ and $r\in R_n$, using \eqref{gsW} we obtain
\[
\begin{aligned}
W_{f'}((\sigma_{2n+1} h \varsigma_n)^+r^\dag) = \ &  \xi_{2n+2}((\sigma_{2n+1} h \varsigma_n)^+ r^\dag) | h^+ r^\dag|_\bk^{n+\frac{1}{2}}\\
&  \begin{aligned}\cdot \int_{G_{2n+1}} &  \int_{\bk^{2n+1}}\Phi(h_1[1_{2n+1} \mid {}^t z] (\sigma_{2n+1} h \varsigma_n)^+r^\dag)\overline\psi(e_{2n+1}{}^t z)\od\! z \\
 & W_f(h_1^{-1})\xi_{2n}(h_1)|h_1|_\bk^n \od\! h_1.
 \end{aligned}
\end{aligned}
\]
Observe that
\[
h_1[1_{2n+1} \mid {}^t z] (\sigma_{2n+1} h \varsigma_n)^+ = [h_1 \sigma_{2n+1} h  \varsigma_n \mid h_1 {}^tz].
\]
Making the changes of variables $h_1\mapsto h_1(\sigma_{2n+1} h\varsigma_n)^{-1}$ and $z\mapsto z\, {}^t(\sigma_{2n+1} h\varsigma_n)$, and writing $h\in S_{2n+1}'$ as in \eqref{S'}, a direct calculation gives
\[
e_{2n+1} \sigma_{2n+1}  h\varsigma_n = (e_{n+1}, x) \in \bk^{2n+1}.
\]
Consequently,
\[
\begin{aligned}
W_{f'}((\sigma_{2n+1} h \varsigma_n)^+r^\dag)  = \ & \xi_{2n+2}^2(r) |r|_\bk^{2n+1} |h|_\bk^{\frac{1}{2}}\\
& \int_{G_{2n+1}}  \CF_{\overline\psi}(\Phi_{r, h_1})(e_{n+1}, x) W_f(\sigma_{2n+1} h \varsigma_n h_1^{-1})
\xi_{2n+2}(h_1) |h_1|_\bk^{n+1} \od\! h_1,
\end{aligned}
\]
where $\Phi_{r, h_1}\in \CS(\bk^{2n+1})$ is defined by
\[
\Phi_{r, h_1}(z) := \Phi(h_1[1_{2n+1} \mid {}^t z]r^\dag),\qquad z\in \bk^{2n+1}.
\]

Substituting the above expression for $W_{f'}((\sigma_{2n+1} h \varsigma_n)^+r^\dag)$ into \eqref{JSeven'} yields
\[
\begin{aligned}
 \oZ_{\rm JS}(s, W_{f'}, \phi, \varphi_{2n+2}^{-1})  = \int_{R_n } \int_{\overline{S}_{2n+1}'}& \int_{G_{2n+1}}
W_{\varsigma_n h_1^{-1}\cdot f}(\sigma_{2n+1} h)  \CF_{\overline\psi}(\Phi_{r, h_1})(e_{n+1}, x) \xi_{2n+2}(h_1) |h_1|_\bk^{n+1} \od\! h_1 \\
& \varphi_{2n+1}'^{-1}(h) |h|_\bk^{\frac{s}{2}} \od\! h   \,  \phi(e_{n+1}r) \xi_{2n+2}^2\eta^{-1}(r)    |r|_\bk^{s+2n+1} \od\! r.
 \end{aligned}
\]
As in Lemma~\ref{lem:switch}, we may interchange the order of integration to obtain the recurrence relation
\be \label{recodd}
\begin{aligned}
&\oZ_{\rm JS}(s, W_{f'}, \phi, \varphi_{2n+2}^{-1}) \\
= &   \int_{R_n }   \int_{G_{2n+1}}   \int_{\overline{S}_{2n+1}'}
W_{\varsigma_n h_1^{-1}\cdot f}(\sigma_{2n+1} h)  \CF_{\overline\psi}(\Phi_{r, h_1})(e_{n+1}, x) \varphi_{2n+1}'^{-1}(h) |h|_\bk^{\frac{s}{2}} \od\! h \\
& \qquad \qquad\qquad\xi_{2n+2}(h_1) |h_1|_\bk^{n+1} \od\! h_1 \, \phi(e_{n+1} r) \xi_{2n+2}^2 \eta^{-1}(r)   |r|_\bk^{s+2n+1} \od\! r\\
= & \int_{R_n }\int_{G_{2n+1}}  \oZ_{\rm JS}(s, W_{\varsigma_n h_1^{-1}\cdot f}, \CF_{\overline\psi}(\Phi_{r, h_1})(e_{n+1}, \cdot), \varphi_{2n+1}^{-1}) \\
 &\qquad  \qquad\qquad\xi_{2n+2}(h_1) |h_1|_\bk^{n+1} \od\! h_1\, \phi(e_{n+1} r) \xi_{2n+2}^2  \eta^{-1}(r)   |r|_\bk^{s+2n+1} \od\! r,
 \end{aligned}
\ee
where we have used \eqref{JSodd'} and \eqref{S'}.

As in the even case, for $z=(z_1, z_2)$ with $z_1\in \bk^{n+1}$ and $z_2\in \bk^{n}$, denote by $\CF_{\psi'}^1$ and $\CF_{\psi'}^2$ the partial Fourier transforms on $\CS(\bk^{2n+1})$ with respect to the variables $z_1$ and $z_2$, respectively, for a nontrivial unitary character $\psi'$ of $\bk$. 
Applying (${\rm FE}_{2n+1}$) for $\xi$ and (${\rm MF}_{2n+1}$) for $\tilde\xi$, 
we see that \eqref{MFm+1} for $m=2n+1$ reduces to the recurrence relation
\be \label{recodd'}
\begin{aligned}
& \Lambda_{\rm JS}(s, f', \phi, \varphi_{2n+2}^{-1}) \\
=\ & \eta(-1)^{n}  \int_{R_n }   \int_{G_{2n+1}}   \Lambda_{\rm JS}(1-s, \sigma'_{2n+1} \varsigma_n {}^t h_1\cdot \tilde f, \CF_{\overline\psi}^1(\Phi_{r, h_1}^-)(e_{n+1},\cdot ), \varphi_{2n+1}) \\
 & \qquad\qquad\qquad \xi_{2n+2}(h_1) |h_1|_\bk^{n+1} \od\! h_1\, \phi(e_{n+1} r) \xi_{2n+2}^2 \eta^{-1}(r)   |r|_\bk^{s+2n+1} \od\! r,
\end{aligned}
\ee
with $\Phi_{r, h_1}^-(z_1, z_2):=\Phi_{r, h_1}(z_1, -z_2)$.

\subsubsection{$\Lambda_{\rm JS}$-side}

We now prove the recurrence relation \eqref{recodd'}. Recall the base point
$x_{2n+2} = (\overline{B}_{2n+2}z_{2n+2}, v_{n+1})$ of the open $S_{2n+2}$-orbit in $\CX_{2n+2}$ given by \eqref{basept}.
For computational convenience, we introduce a new base point. Let
\[
z_{2n+1}' = \begin{bmatrix} -v_{n} & 0& 1 \\ 1_{n} & 0&0\\ 0&  w_{n}&0 \end{bmatrix} \in G_{2n+1}
\]
be as in \eqref{z'}, and set
$
g_n = \begin{bmatrix} -v_{n} & 1  \\ 1_{n} & 0 \end{bmatrix}\in G_{n+1}.
$
A direct check gives
\be \label{zodd'}
(z_{2n+2}g_n^\dag, v_{n+1}\cdot g_n^\dag) = (z_{2n+2}', e_{n+1}),
\quad \text{where}\quad 
z_{2n+2}' = \begin{bmatrix} -v_{n} & 1 \\ 1_{n} &0 \\ & & w_{n}&0 \\ & & -v_{n} & 1\end{bmatrix},
\ee
and clearly
$[1_{2n+1} \mid 0 ] z_{2n+2}' = [z_{2n+1}' \varsigma_n \mid 0]$.
Since $\det g_n = (-1)^n$, we obtain
\be \label{newlambda}
\begin{aligned}
\Lambda(s, f',\phi, \varphi_{2n+2}^{-1})
&= \int_{S_{2n+2}} f'(z_{2n+2}h') \phi(v_{n+1}\cdot h') \varphi_{2n+2}^{-1}(h') |h'|_\bk^{\frac{s}{2}}\od\!h'\\
&= \eta(-1)^{n}\int_{S_{2n+2}} f'(z_{2n+2}' h') \phi(e_{n+1}\cdot h') \varphi_{2n+2}^{-1}(h') |h'|_\bk^{\frac{s}{2}}\od\!h'.
\end{aligned}
\ee

We now decompose the integration domain $S_{2n+2}$. Recall the subgroup $T_n^+ R_n ^\dag$ of $S_{2n+2}$, and let $U_{n+1}$ denote the unipotent radical of the mirabolic subgroup $P_{n+1}$ of $G_{n+1}$:
\[
U_{n+1} = \Set{ u_y' = \begin{bmatrix} 1_{n}  & {}^ty \\ & 1\end{bmatrix} | y \in \bk^{n}}.
\]
Also set
\[
V_{n+1} = \Set{ v_z = \begin{bmatrix} 1_{n+1} & 0&  {}^t z \\   & 1_{n} &0 \\ & & 1 \end{bmatrix} | z\in \bk^{n+1}}.
\]
It is straightforward to verify that the multiplication map
\be \label{4gp}
U_{n+1}^\dag \times T_n^+ \times V_{n+1} \times R_n ^\dag \to S_{2n+2}
\ee
is an embedding with open dense image. We shall integrate over this image.

Let $u_y'\in U_{n+1}$ and $v_z\in V_{n+1}$ be as above. Recall that $T_n=\varsigma_n^{-1}S_{2n+1}'\varsigma_n$. For $h\in S'_{2n+1}$, $r\in R_n$, write
\be\label{h'decom}
h' := u_y'^\dag\, (\varsigma_n^{-1} h \varsigma_n)^+\, v_z \, r^\dag \in S_{2n+2}
\ee
as in \eqref{4gp}. 
Since $U_{n+1}^\dag T_n^+ V_{n+1}\subset P_{2n+2}$, we have
\be \label{chardecom}
e_{n+1}\cdot h' = e_{n+1}r,\qquad
\varphi_{2n+2}(h') = \varphi_{2n+1}'(h)\, \psi(e_{n+1} {}^t z)\, \eta(r),
\ee
where $\varphi_{2n+1}'$ is the character of $S_{2n+1}'$ defined in \eqref{S'}.

Using \eqref{gs}, we get
\[
f'(z_{2n+2}'h') = \xi_{2n+2}(z_{2n+2}' h') |h'|_\bk^{n+\frac{1}{2}}
\int_{G_{2n+1}} \Phi(h_1 [1_{2n+1} \mid 0]z_{2n+2}' h') f(h_1^{-1})
\xi_{2n+2}(h_1) |h_1|_\bk^{n+1} \od\!h_1.
\]
A direct computation shows that for $h'$ as in \eqref{h'decom},
\[
[1_{2n+1} \mid 0]z_{2n+2}' h'
= [z_{2n+1}' \varsigma_n \mid 0] u_y'^\dag (\varsigma_n^{-1} h \varsigma_n)^+ v_z r^\dag
= [z_{2n+1}'u_y h \varsigma_n \mid {}^t z_{h'}] r^\dag,
\]
where $u_y$ is as in \eqref{uy} and
\[
{}^t z_{h'} = z_{2n+1}' u_y h \varsigma_n \begin{bmatrix} {}^t z \\0\end{bmatrix}
+ \begin{bmatrix} 0 \\ w_{n} {}^t y\end{bmatrix} \in \bk^{(2n+1)\times 1}.
\]
We change variables $h_1 \mapsto h_1(z_{2n+1}' u_y h \varsigma_n)^{-1}$ in the above integral. At this point, a lengthy but straightforward calculation is required. Write
\[
h = \begin{bmatrix} g & Xg&0 \\0& g &0\\ 0& x & 1\end{bmatrix} \in S_{2n+1}'
\]
as in \eqref{S'}. Then one obtains
\[
(z_{2n+1}' u_y h \varsigma_n )^{-1}[1_{2n+1} \mid 0]z_{2n+2}' h'
= [1_{2n+1} \mid {}^t z_{h'}']r^\dag,
\]
where
\[
{}^t z_{h'}' = \begin{bmatrix} {}^t z \\ 0\end{bmatrix}
- \begin{bmatrix} g^{-1} X \,{}^t y \\  x g^{-1}\, {}^t y \\  -g^{-1} \, {}^t y\end{bmatrix}.
\]
We then perform the change of variables
$z \mapsto z + (y\, {}^t X \, {}^t g^{-1},\, y \, {}^t g^{-1}\, {}^t x)$ in \eqref{newlambda}.
Recalling the right action of $S_{2n+1}$ on $\bk^{n}$ from \eqref{Sact'} and the involution in \eqref{inv}, one verifies that
$- y\, {}^t g^{-1} = 0\cdot \widehat{u_y h}$.

Using \eqref{chardecom} and the identity $\det z_{2n+2}' = \det (z_{2n+1}' \varsigma_n )$, after the above substitutions we obtain
\[
\begin{aligned}
 \Lambda(s, f',\phi, \varphi_{2n+2}^{-1})
 =\ & \eta(-1)^{n} \int_{R_n} \int_{S_{2n+1}'} \int_{\bk^{n}}
 \int_{\bk^{n+1}} \overline\psi(e_{n+1} {}^t z) \\
& \int_{G_{2n+1}} \Phi_{r, h_1}^-(z, 0\cdot \widehat{u_y h})
   f(z_{2n+1}' u_y h \varsigma_n h_1^{-1}) \xi_{2n+2}(h_1) |h_1|_\bk^{n+1}\od\!h_1 \od\!z \\
& \qquad \psi((0\cdot\widehat{u_y h}) {}^t x) \varphi_{2n+1}'^{-1}(h) |h|_\bk^{\frac{s-1}{2}} \od\!y \od\!h
\, \phi(e_{n+1} r) \xi_{2n+2}^2 \eta^{-1}(r) |r|_\bk^{s+2n+1}\od\!r.
\end{aligned}
\]
Assuming absolute convergence (which will be justified shortly), we interchange the order of integration to get
\be \label{doubodd'}
\begin{aligned}
\Lambda(s, f',\phi, \varphi_{2n+2}^{-1})
= &\, \eta(-1)^{n} \int_{R_n} \int_{G_{2n+1}}
\int_{S_{2n+1}'}\int_{\bk^{n}} f(z_{2n+1}' u_y h \varsigma_n h_1^{-1}) \\
&\qquad \CF^1_{\overline\psi}(\Phi^-_{r, h_1})(e_{n+1}, 0\cdot\widehat{u_yh})
\, \psi((0\cdot\widehat{u_y h})\, {}^t x) \varphi_{2n+1}'^{-1}(h) |h|_\bk^{\frac{s-1}{2}} \od\!y \od\!h \\
&\qquad \qquad \xi_{2n+2}(h_1) |h_1|_\bk^{n+1} \od\!h_1
\, \phi(e_{n+1} r) \xi_{2n+2}^2 \eta^{-1}(r) |r|_\bk^{s+2n+1} \od\!r.
\end{aligned}
\ee

On the other hand, since $S_{2n+1} = \set{ u_y h \mid h\in S_{2n+1}',\ y\in \bk^{n} }$, using \eqref{Lambda2} and the identity ${}^t\varsigma_n^{-1}=\varsigma_n$, we obtain for any $\phi_1\in \CS(\bk^{n+1})$:
\[
\begin{aligned}
& \Lambda_{\rm JS}(1-s, \sigma'_{2n+1} \varsigma_n {}^t h_1\cdot \tilde f, \phi_1, \varphi_{2n+1}) \\
= & \int_{S_{2n+1}'}\int_{\bk^{n}} f(z_{2n+1}' \widehat{u_y h }\varsigma_n h_1^{-1})
\bigl(R_{\varphi_{2n+1}}(u_y h)\phi_1\bigr)(0) |h|_\bk^{\frac{1-s}{2}}\od\!y \od\!h \\
= & \int_{S_{2n+1}'}\int_{\bk^{n}} f(z_{2n+1}' u_y h \varsigma_n h_1^{-1})
\bigl(R_{\varphi_{2n+1}}(\widehat{u_y h})\phi_1\bigr)(0) |h|_\bk^{\frac{s-1}{2}}\od\!y \od\!h.
\end{aligned}
\]
For $u_y h\in S_{2n+1}$ as above, Proposition~\ref{prop:Rodd} (1) gives
\[
R_{\varphi_{2n+1}}(\widehat{u_y h})\phi_1(0)
= \phi_1(0\cdot\widehat{u_yh})\, \psi((0\cdot\widehat{u_y h})\, {}^t x) \varphi_{2n+1}'^{-1}(h).
\]
Now set $\phi_1 := \CF^1_{\overline\psi}(\Phi^-_{r, h_1})(e_{n+1}, \cdot)$. The same arguments as in the proof of Lemma~\ref{lem:switch}, together with $({\rm MF}_{2n+1})$, show that \eqref{doubodd'} is absolutely convergent. This proves \eqref{recodd'}, and hence completes the proof of \eqref{MFm+1} for $m=2n+1$.
   
\section{Proof of Theorem \ref{thm:FE_m} and Theorem~\ref{thm:FE'_m} (2)} \label{sec:red}

Theorem~\ref{thm:FE_m} is trivial for $m=0$, so we assume $m\geq 1$. 
Fix an inducing datum $(\overline P,\tau)$ as in \eqref{nt}, and set
\[
\mathcal{M}_r := \left\{ \lambda\in \mathbb{C}^r : \max \Re(\lambda) < \min \Re(\lambda) + 1/2 \right\}.
\]
For every nonempty open subset $\mathcal{N}\subset \mathcal{M}_r$, define
\[
\Omega_{\mathcal{N}} := \left\{ (s,\lambda)\in \mathbb{C}\times \mathbb{C}^r : 
\lambda\in \mathcal{N},\ 
s \in \mathcal{H}_{\left(\Re(\eta)-2\min\Re(\lambda),\ \Re(\eta)+1-2\max\Re(\lambda)\right)}
\right\},
\]
which is a nonempty open subset of $\mathbb{C}\times \mathbb{C}^r$.

\begin{leml} \label{lem:red}
Assume that there exists a nonempty open subset $\CN\subset \CM_r$ such that \eqref{eq:FE_m} holds for all $(s,\lambda)\in \Omega_\CN$. 
Then Theorem~\ref{thm:FE_m} holds for the pair $(\overline P,\tau)$.
\end{leml}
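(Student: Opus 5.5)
Assuming \eqref{eq:FE_m} on $\Omega_\CN$, the plan is to obtain parts (1)--(4) of Theorem~\ref{thm:FE_m} for the pair $(\overline P,\tau)$ by analytic continuation and a Hartogs-type patching argument.

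\textbf{Part (1).} Absolute convergence is Corollary~\ref{cor:conv}(1). Given $\lambda$, choose $\mu\in(\CA_T^G)^*$ with $\lambda\in\CU[\prec\mu]$ and $\min\mu$ as close as desired to $\min\Re(\lambda)$. Then \eqref{ss'} places $s$ in the half plane of Corollary~\ref{cor:conv}.

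\textbf{Part (2).} Set
\[
Z(s,\lambda,f,\phi):=\oZ_{\rm JS}(s,W_{f_\lambda},\phi,\varphi_m^{-1})
\]
on the domain $D_+:=\{\Re(s)>\Re(\eta)-2\min\Re(\lambda)\}$, where it is holomorphic in $(s,\lambda)$ and continuous by Corollary~\ref{cor:conv}. Symmetrically, let $\widetilde Z(s,\lambda,f,\phi)$ be the left side of \eqref{eq:FE_m}, built from $\sigma_m'\cdot\widetilde{W}_{f_\lambda}$ and $\widehat\phi$. It is holomorphic on $D_-:=\{\Re(1-s)>2\max\Re(\lambda)-\Re(\eta)\}$, since $\breve\pi_\lambda$ is again of the form \eqref{nt} with parameter $-\lambda$ reversed and $\eta$ replaced by $\eta^{-1}$.

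Next normalize. The function
\[
Z/\oL(s,\pi_\lambda,\wedge^2\otimes\eta^{-1})
\]
is holomorphic on $D_+$, because $1/\oL$ is entire in $(s,\lambda)$: it is a product of reciprocals of Tate/Gamma factors, and here we use the hypothesis on the cuspidal support in the $p$-adic case. Likewise $\widetilde Z/\oL(1-s,\pi_\lambda^\vee,\wedge^2\otimes\eta)$ is holomorphic on $D_-$.

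The hypothesis says that on $\Omega_\CN\subset D_+\cap D_-$ these two normalized functions differ by the factor $\eta(-1)^{mn}\varepsilon(s,\pi_\lambda,\wedge^2\otimes\eta^{-1},\psi)$. This factor is entire and nowhere vanishing in $(s,\lambda)$. By the identity theorem, the relation therefore persists on the connected set $D_+\cap D_-$ wherever it is nonempty.

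Hence $Z^\circ$ and $\varepsilon^{-1}\widetilde Z^\circ$ glue to a holomorphic function on $D_+\cup D_-$. For fixed $\lambda$, the union $D_+\cup D_-$ covers all $s$ whenever $\lambda\in\CM_r$; indeed the gap condition gives $\Re(\eta)-2\min\Re(\lambda)<\Re(\eta)+1-2\max\Re(\lambda)$.

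For general $\lambda$, $D_+\cup D_-$ is a tube domain in $\BC\times\BC^r$ over a connected base. By Bochner's tube theorem, holomorphic functions on it extend to its convex hull, which is all of $\BC^{1+r}$. This is the \textbf{main obstacle}. Bochner's theorem needs uniform control to retain continuity in $f$ and linearity, so I would instead argue with the Hartogs/Cauchy integral representation on polydiscs. Alternatively, one can use boundedness on vertical strips (Corollary~\ref{cor:conv}(3)) together with the seminorm estimates, so that the extension stays jointly continuous in $(s,\lambda,f,\phi)$.

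Uniqueness in part (2) follows from the identity theorem.

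\textbf{Part (3).} The functional equation \eqref{eq:FE_m} for all $(s,\lambda)$ follows from the same analytic continuation, since both sides are holomorphic on $\BC^{1+r}$ and agree on $\Omega_\CN$.

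\textbf{Part (4).} Fix $\lambda\in\CM_r$ and $s_0$. If $\Re(s_0)$ is large, then $s_0$ lies in $D_+$. Proposition~\ref{prop:nonv}, applied to the irreducible generic constituent, or more directly its proof via compactly supported functions on the mirabolic subgroup, gives $W,\phi$ with $Z(s_0)\neq 0$. Since $1/\oL(s_0)\neq 0$, a nonvanishing at a pole of $\oL$ is not automatic; one reduces to points where $\oL$ is finite using the functional equation. Concretely, any $s_0$ lies in $D_+$ or $D_-$. In $D_+$ the nonvanishing comes from the mirabolic construction, applied with the fact that $Z^\circ=Z/\oL$ and that $Z$ can be made to equal an entire nonzero function times $1$. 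In $D_-$ one uses the dual construction and the nonvanishing of $\varepsilon$.
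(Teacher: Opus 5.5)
Your route to holomorphy in Part~(2), and hence to Part~(3), differs from the paper's and is valid. The paper symmetrizes with a square root $\epsilon_{1/2}$ of $\eta(-1)^{mn}\varepsilon$ and glues $\oZ_+$ and $\oZ_-$ only over $\BC\times U$ for a small open set $U$ of parameters. It then extends in $\lambda$ by \cite[Proposition~2.8.1]{BP21}, whose hypotheses are the vertical-strip bounds of Proposition~\ref{prop:conv0}.

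You argue differently. Write $D_+=\{\Re(s)>\Re(\eta)-2\min\Re(\lambda)\}$ and $D_-=\{\Re(s)<1+\Re(\eta)-2\max\Re(\lambda)\}$. You propagate \eqref{eq:FE_m} from $\Omega_\CN$ to all of $D_+\cap D_-$ by the identity theorem. This is legitimate, because $D_+\cap D_-$ is a tube over a convex base ($-\min$ is convex and $-\max$ is concave). You then glue over the tube $D_+\cup D_-$, whose base is connected with convex hull $\mathbb{R}^{1+r}$, and apply Bochner's tube theorem. This needs no growth estimates and no square root of $\varepsilon$, since dividing by the entire, nowhere vanishing $\varepsilon$ suffices. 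What the paper's route gives in addition is quantitative control of the continued integrals in vertical strips, which it then uses for continuity.

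What is actually missing is the continuity assertion in Part~(2). You flag it as the main obstacle, but neither suggestion is carried out. A Cauchy formula on polydiscs only expresses the extension through values on $D_+\cup D_-$ near that set, unless you track estimates through the whole proof of Bochner's theorem. Linearity is not an issue, since it follows from uniqueness of the extension.

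The gap closes cleanly by the open mapping theorem. Restriction $\mathcal{O}(\BC^{1+r})\to\mathcal{O}(D_+\cup D_-)$ is a continuous linear bijection of Fr\'echet spaces: injective by the identity theorem, surjective by Bochner. So its inverse is continuous. Joint continuity in Corollary~\ref{cor:conv}(2) and its dual version, with a finite-cover argument on compacta, makes $(f,\phi)\mapsto$ (glued function) a continuous bilinear map $\pi_\tau\times\CS(\bk^n)\to\mathcal{O}(D_+\cup D_-)$. Composing with the inverse of restriction, and using joint continuity of evaluation $\mathcal{O}(\BC^{1+r})\times\BC^{1+r}\to\BC$, gives continuity in $(s,\lambda,f,\phi)$. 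The paper obtains the same via the Cauchy formula \eqref{Cauchy} for $e^{s^N}\oZ_\pm$.

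In Part~(4) you omit the input that makes the argument work. Since $\tau$ is tempered, the poles of $\oL(s,\pi_\lambda,\wedge^2\otimes\eta^{-1})$ lie in $\Re(s)\le\Re(\eta)-2\min\Re(\lambda)$, outside $D_+$. Likewise, the poles of $\oL(1-s,\pi_\lambda^\vee,\wedge^2\otimes\eta)$ lie in $\Re(s)\ge 1+\Re(\eta)-2\max\Re(\lambda)$, outside $D_-$. This is the ``standard property of Artin $L$-functions'' the paper invokes.

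With it, the argument goes through:
\begin{itemize}
\item Proposition~\ref{prop:nonv} gives non-vanishing of $\oZ^\circ_{\mathrm{JS}}$ at $s_0\in D_+$.
\item Its dual version together with \eqref{eq:FE_m} handles $s_0\in D_-$.
\item $D_+\cup D_-$ covers every $s_0$ when $\lambda\in\CM_r$.
\end{itemize}
Your sentence ``since $1/\oL(s_0)\neq0$'' has it backwards: at a pole, $1/\oL(s_0)=0$. Also, entireness of $1/\oL$ does not use the cuspidal-support hypothesis.
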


\begin{proof}
Recall that Part~(1) of Theorem~\ref{thm:FE_m} follows from Corollary~\ref{cor:conv}.

Following the argument in \cite[Section~3.10]{BP21}, there exist constants $u\in \BC^\times$, $C\in \mathbb{R}^\times_+$, and a linear functional $\ell$ on $\BC^r$ such that
\[
\eta(-1)^{mn} \varepsilon(s, \pi_\lambda, \wedge^2\otimes \eta^{-1}, \psi) = u\cdot C^{\ell(\lambda)+s-\frac{1}{2}}\qquad \textrm{for all }\, \lambda\in \BC^r,\ s\in \BC.
\]
Choose a square root $v$ of $u$ and set
\[
\epsilon_{1/2}(s, \pi_\lambda, \wedge^2\otimes\eta^{-1}, \psi):= v\cdot \sqrt{C}^{\ell(\lambda)+s-\frac{1}{2}},
\]
so that
\[
\eta(-1)^{mn} \varepsilon(s, \pi_\lambda, \wedge^2\otimes \eta^{-1}, \psi)
= \epsilon_{1/2}(s, \pi_\lambda, \wedge^2\otimes\eta^{-1}, \psi)^2.
\]

For $f\in \pi_\tau$ and $\phi \in \CS(\bk^n)$, define
\[
\begin{aligned} 
\oZ_+(s,\lambda)  & := \epsilon_{1/2}(s, \pi_\lambda,\wedge^2\otimes\eta^{-1},  \psi)\,
   \oZ_{\rm JS}^\circ\left(\frac{1}{2}+s, W_{f_\lambda}, \phi,\varphi_m^{-1}\right),\\
\oZ_-(s,\lambda) & :=  \epsilon_{1/2}(s, \pi_\lambda, \wedge^2\otimes\eta^{-1}, \psi)^{-1}\,
   \oZ_{\rm JS}^\circ\left(\frac{1}{2}+s, \sigma'_m\cdot \widetilde{W}_{f_\lambda}, \widehat\phi,\varphi_m\right)
\end{aligned}
\]
whenever $(s,\lambda)$ satisfies the convergence conditions in \eqref{ss'}. By the assumption of the lemma, \eqref{eq:FE_m} holds on $\Omega_\CN$. Put
\[
\Omega_\CN':=\set{(s-\tfrac12,\lambda) \mid (s,\lambda)\in \Omega_\CN},
\]
and choose a subset $\CH_{(a,b)}\times U\subset \Omega_\CN'$, where $a<b$ are real numbers and $U\subset \BC^r$ is a nonempty relatively compact connected open subset. Then
\be \label{FE}
\oZ_+(s, \lambda) =  \oZ_-(-s, \lambda)
\ee
for every $(s,\lambda)\in \CH_{(a,b)}\times U$.

By Lemma~\ref{lem:Lebesgue} and the proof of Proposition~\ref{prop:conv0}, the functions $\oZ_+(s,\lambda)$ and $\oZ_-(-s,\lambda)$ are holomorphic on $\CH_{(a,+\infty)}\times U$ and $\CH_{(-\infty, b)}\times U$, respectively. Hence each $\oZ_\epsilon(s,\lambda)$ ($\epsilon=\pm$) admits a holomorphic extension to $\BC\times U$ such that \eqref{FE} remains valid. Moreover, by Proposition~\ref{prop:conv0}, the extended functions $\oZ_+(s,\lambda)$ and $\oZ_-(s,\lambda)$ on $\BC\times U$ satisfy the hypotheses of \cite[Proposition~2.8.1]{BP21}; consequently they extend to holomorphic functions on $\BC\times \BC^r$ that still satisfy \eqref{FE}. This proves the holomorphy assertion in Part~(2) of Theorem~\ref{thm:FE_m}, as well as Part~(3). The linearity assertion in Part~(2) is immediate.

It remains to prove the continuity assertion in Part~(2). Following the proof of \cite[Proposition~2.8.1]{BP21}, for any connected relatively compact open subset $U'\subset \BC^r$ containing $U$, there exist $C>0$ and a nonnegative integer $N$ such that the functions
\[
\oZ_{\epsilon,N}(s,\lambda):= e^{s^N}\oZ_\epsilon(s,\lambda)
\]
on $\CH_{(C,\infty)}\times U'$ are rapidly decreasing in vertical strips in the first variable, locally uniformly in the second variable (see \cite[Section~2.8]{BP21} for the precise definition). By Cauchy's integral formula,
\be \label{Cauchy}
\oZ_{\epsilon,N}(s,\lambda)
= \frac{1}{2\pi i}\left(
\int^\infty_{-\infty}\frac{\oZ_{\epsilon,N}(D+{\rm i}x, \lambda)}{D+{\rm i}x-s}\od\!x
- \int^\infty_{-\infty}\frac{\oZ_{-\epsilon,N}(D+{\rm i}x, \lambda)}{D+{\rm i}x+s}\od\!x
\right)
\ee
for every $D>C$ and $(s,\lambda)\in \CH_{(-D,D)}\times U'$, where $-\epsilon$ denotes $-$ if $\epsilon=+$ and $+$ if $\epsilon=-$. The continuity assertion now follows from \eqref{Cauchy}, the dominated convergence theorem, Proposition~\ref{prop:conv0}, and the continuity of the map
\[
\CU[\prec\mu]\times \pi_\tau \to \CC_\mu(N_m\bs G_m, \psi_m),\qquad (\lambda, f)\mapsto W_{f_\lambda},
\]
for every $\mu\in \CA_0^*$ (identifying $\BC^r$ with $\CA_{M,\BC}^*$ in the natural way).

Finally, Part~(4) of Theorem~\ref{thm:FE_m} is an immediate consequence of \eqref{eq:FE_m}, Proposition~\ref{prop:nonv}, and the standard properties of Artin $L$-functions.
\end{proof}

We are now in a position to prove Theorem~\ref{thm:FE_m}. By Theorem~\ref{thm2:FE_m}, the assumption of Lemma~\ref{lem:red} holds when $\overline P$ is a Borel subgroup. Hence Lemma~\ref{lem:red} implies Theorem~\ref{thm:FE_m} for principal series representations.

Now set
\[
\CN:=\Set{\lambda=(\lambda_1, \lambda_2,\dots, \lambda_r)\in\BC^r : \abs{\Re(\lambda_i)-\Re(\eta)}<\tfrac14,\ i=1,2,\ldots,r}.
\]
For $\lambda\in \CN$, the representation $\pi_\lambda\otimes |\eta|^{-\frac12}$ is nearly tempered. By the assumption on $\tau$ and the classification of tempered representations of general linear groups (see \cite{Z80} for the non-archimedean case), $\pi_\lambda$ is isomorphic to a quotient of a principal series representation $I(\xi)$. Since $\pi_\lambda$ is irreducible, its contragredient $\pi_\lambda^\vee$ is isomorphic to a quotient of $I(\tilde\xi)$. Consequently,
\[
\CW(\pi_\lambda,\psi_m)= \CW(I(\xi),\psi_m)
\quad\text{and}\quad
\CW(\pi_\lambda^\vee,\overline\psi_m) = \CW(I(\tilde\xi),\overline\psi_m).
\]
A direct computation using local $L$-parameters gives
\[
\gamma(s, \pi_\lambda, \wedge^2\otimes \eta^{-1}, \psi)
= \gamma(s, I(\xi), \wedge^2\otimes\eta^{-1}, \psi).
\]
Thus Theorem~\ref{thm2:FE_m} implies that \eqref{eq:FE_m} holds for all $(s,\lambda)\in \Omega_\CN$. Applying Lemma~\ref{lem:red} once again, we conclude that Theorem~\ref{thm:FE_m} holds. 

Finally, Theorem~\ref{thm:FE'_m} (2) follows from Theorem~\ref{thm:FE_m} (2) and Theorem~\ref{thm:MF_m}. Since Theorem~\ref{thm:FE'_m} (1) has been proved in Section \ref{sec:FE'}, this completes the proof of Theorem~\ref{thm:FE'_m}.

\section*{Acknowledgements}


The authors thank Rapha\"el Beuzart-Plessis for answering a question about \cite{BP21}. 
D. Jiang is supported in part by 
the Simons Grants: SFI-MPS-SFM-00005659 and 
SFI-MPS-TSM-00013449. 
D. Liu is supported in part by National Key R \& D Program of China No. 2022YFA1005300 and National Natural Science Foundation of China No. 12526208.  B. Sun is supported in part by National Key R \& D Program of China No. 2022YFA1005300  and New Cornerstone Science Foundation. F. Tian
is supported in part by National Key R \& D Program of China No. 2022YFA1005304.

\end{document}